\documentclass[11pt]{amsart}

\usepackage[margin=3.5cm]{geometry}
\usepackage{amssymb, amsmath}
\usepackage{amscd}
\usepackage[alwaysadjust]{paralist}
\usepackage{hyperref}
\usepackage[capitalize]{cleveref}
\usepackage{enumitem}
\usepackage{esint}
\usepackage{graphicx} 
\usepackage[rgb]{xcolor} 
\usepackage{verbatim}

\numberwithin{equation}{section}
\newtheorem{claim}[equation]{Claim}
\newtheorem{cor}[equation]{Corollary}
\newtheorem{lemma}[equation]{Lemma}

\newtheorem{theorem}[equation]{Theorem}
\theoremstyle{definition}

\newtheorem{rem}[equation]{Remark}
\newtheorem{rems}[equation]{Remarks}


\def\IN{\mathbb N}
\def\IR{\mathbb R}

\def\eps{\varepsilon}


\newcommand{\mult}{\operatorname{mult}}

\newcommand{\supp}{\operatorname{supp}}

\newcommand{\area}{\operatorname{area}}

\newcommand{\Sph}{\mathbb{S}}


\begin{document}

\title[Sharp asymptotics of the first eigenvalue]{Sharp asymptotics of the first eigenvalue on some degenerating surfaces}
\author{Henrik Matthiesen}
\address
{HM: Department of Mathematics, University of Chicago,
5734 S. University Ave, Chicago, Illinois 60637}
\email{hmatthiesen@math.uchicago.edu}
\author{Anna Siffert}
\address
{AS: Max Planck Institute for Mathematics,
Vivatsgasse 7, 53111 Bonn}
\email{siffert@mpim-bonn.mpg.de}
\date{\today}
\subjclass[2010]{35P15, 49Q05, 49Q10, 58E11}
\keywords{Laplace operator, topological spectrum, sharp eigenvalue bound, minimal surface, shape optimization}

\begin{abstract}
We study sharp asymptotics of the first eigenvalue on Riemannian surfaces obtained from a fixed Riemannian surface by attaching a collapsing
flat handle or cross cap to it.
Through a careful choice of parameters this construction can be used to strictly increase the first eigenvalue normalized by area if the initial surface has some symmetries.
If these symmetries are not present we show that the first eigenvalue normalized by area strictly decreases for the same range of parameters.
These results are the main motivation for the construction in \cite{MS3},
where we show a monotonicity result for the normalized first eigenvalue without any symmetry assumptions.
\end{abstract}

\maketitle

\section{Introduction}
For a closed Riemannian surface $(\Sigma,g)$ the (positive) Laplace operator acting on functions has discrete spectrum. 
We list its eigenvalues -- counted with multiplicity -- as
$$
0=\lambda_0(\Sigma,g) < \lambda_1(\Sigma,g) \leq \lambda_2(\Sigma,g) \dots \to \infty.
$$
The goal of this article is to understand the asymptotics of the scale invariant quantity 
$\lambda_1(\Sigma,g)\area(\Sigma,g)$ for a family of surfaces
$\Sigma_{\eps,h}$ obtained from the surface $\Sigma$ by attaching a flat handle $C_{\eps,h}$ or cross cap $M_{\eps,h}$ of height $h$ and radius $\eps$ that decreases -- see \cref{attach-crosscap} and \cref{attach-cylinder} below for the explicit constructions.

Variants of this problem have been studied before by various authors, see \cite{anne,anne_2,anne_3, post-diss, post, ces}, but with much less precise asymptotics than we obtain here.

\smallskip

The motivation to study this question stems from the maximization problem for the first eigenvalue normalized by area on a closed
surface of fixed topological type -- see \cite{MS1, MS3} and references therein for a short introduction to this problem.
In \cite{petrides}, Petrides proved that one can find a maximizing metric provided the sharp constant strictly increases in terms of the topology of
the surface.
A special case of this was already present in Nadirashvili's solution of Berger's isoperimetric problem \cite{nadirashvili}.
More generally, one can ask the following question:
\\

\emph{
Given a closed surface $\Sigma$, let $\Sigma'$ be obtained from $\Sigma$ by attaching a handle or a cross cap.
Can one find a metric $g'$ on $\Sigma'$ such that
$$
\lambda_1(\Sigma,g)\area(\Sigma,g) < \lambda_1(\Sigma',g')\area(\Sigma',g')\, \text{?}
$$
}

The obvious strategy that one would like to implement in order to prove such a result is to consider a family of surfaces $\Sigma_\eps$ (e.g.\ $\Sigma_{\eps,h}$ as described above) that is obtained from $\Sigma$ by attaching a tiny handle or cross cap
and 
study the asymptotics of the first eigenvalue as the handle or cross cap, respectively, collapses.
The hope is that the potential loss in the eigenvalue is compensated by the gain in area from the attached region.
It turns out that in some cases, one can in fact achieve this by means of the surfaces $\Sigma_{\eps,h}$ mentioned above.
In many other cases this seems much harder as we show that for the very same construction $\lambda_1(\Sigma,g)\area(\Sigma,g)$ strictly decreases for exactly those parameters for which the construction works under some symmetry assumption.
Still, in this case we can identify the mechanism behind this to some extent as explained in some more detail in \cref{subsec_ideas}.
This understanding is the starting point in \cite{MS3} where we give a positive answer to the above question without any restrictions on $\Sigma$ by means of a much more involved construction.

\smallskip

Before we can precisely state our main result, we need to introduce the two parameter family $\Sigma_{\eps,h}$ of surfaces that we are working with.

\subsection{Attaching a flat cross cap}\label{attach-crosscap}

Let $(\Sigma,g)$ be a closed Riemannian surface.
We fix some point $x_0 \in \Sigma$ and denote by $U$ a coordinate neighborhood containing $x_0,$
such that $g$ is conformal to the Euclidean metric in $U,$ that is $g=f g_e$ with $f$ a smooth, positive function and $g_e$ the Euclidean metric.
By dilations we may and will assume from here on that we have $f(x_0)=1$.
Let $B_{\eps^k}=B_{g_e}(x_0,\eps^k)$ be a ball centered at $x_0$ with radius $\eps^k$ with respect to $g_e$, where $k\in\mathbb{N}$.
We want to glue a cross cap 
\[
M_{\eps,h}=\Sph^1(\eps) \times [0,h]/\sim,
\]
where $(\theta,t) \sim (\theta+\pi,h-t),$
endowed with its canonical flat metric 
along its boundary to $\Sigma \setminus B_{\eps^k}$.
More precisely, we consider the surface
\[\
\Sigma_{\eps,h}:=(\Sigma \setminus B_{\eps^k}) \cup_{\partial B_\eps} M_{\eps,h},
\]
which we endow with the (non-continuous) metric $g_{\eps,h}$ given by $g$ on $\Sigma \setminus B_{\eps^k}$ and by the flat metric 
on $M_{\eps,h}.$
Below we assume $k>4$ for technical reasons.

\begin{figure}[ht]
	\centering
  \includegraphics[width=12cm]{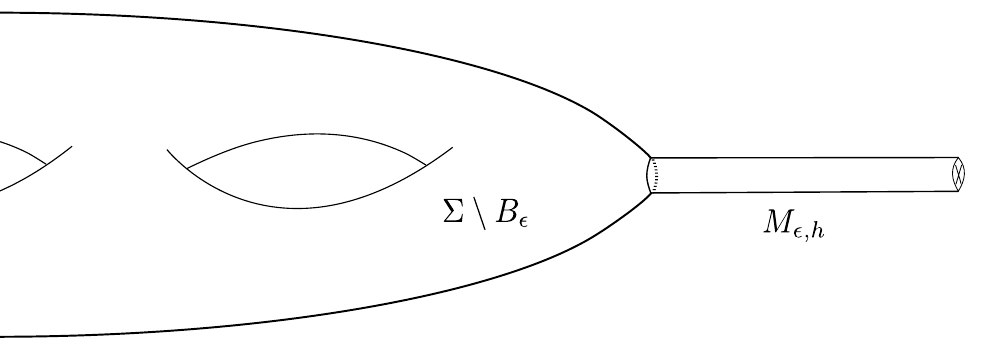}
	\caption{A part of the surface $\Sigma_{\eps,h}$}
	\label{fig2}
\end{figure}

\subsection{Attaching a flat cylinder}\label{attach-cylinder}

Similarly as above, we consider
\begin{equation*}
C_{\eps,h}=\Sph^1(\eps)\times[0,h]
\end{equation*}
endowed with its canonical flat metric.

For two distinct points $x_0, x_1 \in\Sigma$ such that $g$ is smooth near $x_i$, we take  conformally flat neighborhoods as above, which we endow with Euclidean coordinates.
We then consider the surface
\begin{equation*}
\Sigma_{\eps,h}=(\Sigma \setminus (B_{\eps^k} (x_0) \cup B_{\eps^k}(x_1) )) \cup_{\partial B_{\eps^k}(x_0) \cup \partial B_{\eps^k} (x_1)} C_{\eps,h},
\end{equation*}
where the balls $B_{\eps^k}(x_0)$ and $B_{\eps^k}(x_1)$ are again with respect to the Euclidean metric. Moreover, without loss of generality, these balls are assumed to be disjoint.
For technical reasons we again assume $k>4$.

\subsection{Main results}
Given the construction of the surfaces $\Sigma_{\eps,h}$, we can now state our first main result concerning surfaces whose first eigenfunctions all have some symmetry.

In both constructions of $\Sigma_{\eps,h}$, we restrict to parameters $h \in [h_0,h_1]$,
where $h_i$ are chosen such that
\begin{equation} \label{eq_choice_h}
\pi^2/h_1^2 < \lambda_1(\Sigma) < \pi^2/h_0^2 < \lambda_{K+1}(\Sigma),
\end{equation}
where $K=\mult(\lambda_1(\Sigma))$ denotes the multiplicity of $\lambda_1(\Sigma)$.
Note that\footnote{The notational convenience originating here is the reason for the not very geometric convention in the notation of $M_{\eps,h}$.} 
$$
\lambda_0(M_{\eps,h}) = \lambda_0(C_{\eps,h}) = \frac{\pi^2}{h^2},
$$
where $\lambda_0$ denotes the smallest Dirichlet eigenvalue.

\begin{theorem} \label{main_1}
Let $(\Sigma, g)$ be a closed Riemannian surface.
\begin{itemize}
\item[(i)]
Assume there is $x_0 \in \Sigma$ such that $\phi(x_0)=0$ for any $\lambda_1(\Sigma)$-eigenfunction $\phi$.
Then 
$$
\lambda_1(\Sigma_{\eps,h_0}) \area(\Sigma_{\eps,h_0}) > \lambda_1(\Sigma,g)\area(\Sigma,g)
$$ 
for $\eps$ sufficiently small; where $\Sigma_{\eps,h}$ is obtained from $\Sigma$ by attaching a cross cap near $x_0$ as above.
\item[(ii)]
Assume that there are distinct points $x_0,x_1 \in \Sigma$ such that $\phi(x_0) + \phi(x_1) =0$
for any $\lambda_1(\Sigma)$-eigenfunction $\phi$.
Then, for $\eps$ sufficiently small, there is $h_\eps \in [h_0,h_1]$ such that
$$
\lambda_1(\Sigma_{\eps,h_\eps}) \area(\Sigma_{\eps,h_\eps}) > \lambda_1(\Sigma,g)\area(\Sigma,g),
$$ 
where $\Sigma_{\eps,h}$ is obtained from $\Sigma$ by attaching a flat cylinder near $x_0$ and $x_1$ as above.
\end{itemize}
\end{theorem}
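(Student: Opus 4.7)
The plan is to reduce both parts to a sharp eigenvalue asymptotic
\[
\lambda_1(\Sigma_{\eps,h}) = \lambda_1(\Sigma) + o(\eps)
\]
at the appropriate value of $h$. Combined with the elementary area expansion $\area(\Sigma_{\eps,h}) = \area(\Sigma) + \alpha\eps h + O(\eps^{2k})$, where $\alpha = \pi$ for the cross cap and $\alpha = 2\pi$ for the cylinder (and $O(\eps^{2k}) = o(\eps)$ since $k > 4$), this yields
\[
\lambda_1(\Sigma_{\eps,h})\area(\Sigma_{\eps,h}) - \lambda_1(\Sigma)\area(\Sigma) = \alpha\lambda_1(\Sigma)\, h\eps + o(\eps),
\]
which is strictly positive for $\eps$ small. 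The upper bound on the first eigenvalue will be comparatively routine; the main work is the matching lower bound $\lambda_1(\Sigma_{\eps,h}) \geq \lambda_1(\Sigma) - o(\eps)$.

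The structural backbone of both parts is a Dirichlet-spectrum dichotomy on the attached piece: by the choice of $h_0, h_1$, its Dirichlet ground state eigenvalue $\pi^2/h^2$ lies in $(\lambda_1(\Sigma), \lambda_{K+1}(\Sigma))$ with explicit ground state $\psi_0(\theta,t) = \sin(\pi t/h)$, while all further Dirichlet eigenvalues are of size $\gtrsim \eps^{-2}$. Consequently, any function of bounded Rayleigh quotient on $\Sigma_{\eps,h}$, restricted to the attached piece, decomposes as a harmonic boundary extension plus a scalar multiple of $\psi_0$ plus a Dirichlet-orthogonal remainder of $L^2$-norm $O(\eps)$. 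Rayleigh-quotient analysis on $\Sigma_{\eps,h}$ then transfers to analysis on $\Sigma$ via a harmonic extension across $B_{\eps^k}$, whose capacity-type Dirichlet-energy cost is $o(\eps)$ precisely because $k > 4$.

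For part (i), the upper bound is obtained by multiplying a $\lambda_1(\Sigma)$-eigenfunction $\phi$ by a cutoff vanishing inside $B_{2\eps^k}$ and extending by zero to $M_{\eps,h_0}$: the hypothesis $\phi(x_0) = 0$ gives $\phi = O(|x-x_0|)$ near $x_0$, so the cutoff cost is $O(\eps^{2k}) = o(\eps)$. For the lower bound, given a first eigenfunction $u$ on $\Sigma_{\eps,h_0}$, I would decompose $u|_{M_{\eps,h_0}}$ via the dichotomy above, harmonically extend $u|_{\Sigma \setminus B_{\eps^k}}$ across $B_{\eps^k}$, correct the mean to zero, and compare against the variational characterization of $\lambda_1(\Sigma)$. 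The role of the hypothesis $\phi(x_0) = 0$ is to make the leading coupling between the $\Sigma$-part and the $\psi_0$-mode --- whose amplitude is proportional to $\phi(x_0)$ at leading order --- vanish, so the eigenvalue shift is of higher order than $\eps$.

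Part (ii) follows the same template, except the cylinder has two gluing circles and the matching involves both $\phi(x_0)$ and $\phi(x_1)$. Here $\psi_0 = \sin(\pi t/h)$ is symmetric about the midpoint and couples to $\phi(x_0) + \phi(x_1)$, a combination killed by the antisymmetry hypothesis; instead one matches $\phi$ with an odd-about-the-midpoint mode $B\sin(\sqrt{\lambda_1(\Sigma)}(t - h/2))$ on the cylinder. At the resonance $h_\ast = \pi/\sqrt{\lambda_1(\Sigma)} \in (h_0, h_1)$, this mode simplifies to $\phi(x_0)\cos(\pi t/h_\ast)$, whose normal derivatives vanish at both ends, matching the $O(\eps^{2k})$ flux out of $\Sigma$ through the two small boundary circles. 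The required $h_\eps \in [h_0, h_1]$ is then produced by an intermediate-value argument on a continuous spectral quantity, adjusting $h_\ast$ by a small correction to absorb subleading terms in $\eps$. The principal technical obstacle throughout is the sharp lower bound, where the capacity-level control of the harmonic extension across $B_{\eps^k}$ (the source of the assumption $k > 4$) and the precise tracking of the coupling between the $\Sigma$-part and the $\psi_0$-mode must both come out to $o(\eps)$.
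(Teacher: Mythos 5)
There is a genuine gap in the lower bound for part (i), which is the heart of the matter. Your plan is a direct Rayleigh-quotient comparison: take the first eigenfunction $u$ on $\Sigma_{\eps,h_0}$, harmonically extend $u|_{\Sigma\setminus B_{\eps^k}}$ across the ball, decompose $u|_{M_{\eps,h_0}}=v+a\psi_{\eps,h_0}+r$ with $v$ the harmonic extension of the boundary trace, and compare to $\lambda_1(\Sigma)$. The trouble is that the available pointwise control on $u$ near $\partial B_{\eps^k}$ is only logarithmic (\cref{max_eigen_neu_ell_est}), so $\|v\|_{L^2(M_{\eps,h_0})}=O\bigl(\eps^{1/2}\log^{1/2}(1/\eps)\bigr)$ and $\langle v,\psi_{\eps,h_0}\rangle=O\bigl(\eps^{1/2}\log^{1/2}(1/\eps)\bigr)$. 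Carrying these through the Rayleigh-quotient inequality and optimizing over the unknown amplitude $a$ yields only $\lambda_1(\Sigma_{\eps,h_0})\ge\lambda_1(\Sigma)-O(\eps\log(1/\eps))$, which is \emph{not} $o(\eps)$ and is swamped by the $O(h_0\eps)$ area gain, so the product inequality does not follow. Your assertion that the $\psi_0$-amplitude is ``proportional to $\phi(x_0)$'' and hence negligible is the crucial unproved step: establishing it at the required precision is exactly as hard as the original problem, and the paper does not attempt to do so directly. Instead it bypasses the Rayleigh quotient entirely: since $\phi_i(x_0)=0$ the cut-off quasimodes $(\phi_i)_\eps$ of \eqref{eq_quasimod_surf} solve the weak eigenvalue equation up to $O(\eps^{k/2})$ (\cref{mono_quasimode_1}), and the spectral-projection estimate \cref{lem_anne_quasimod} locates $K$ nearly orthonormal eigenfunctions with eigenvalues in $[\lambda_1(\Sigma)-\eps^{k/4},\lambda_1(\Sigma)+\eps^{k/4}]$; combined with the qualitative spectral convergence \cref{conv} (which says only $K$ nontrivial eigenvalues lie below $\lambda_1(\Sigma)+\delta$) this forces $\lambda_1(\Sigma_{\eps,h_0})\ge\lambda_1(\Sigma)-\eps^{k/4}$, and $k>4$ is used precisely to make $\eps^{k/4}=o(\eps)$.

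For part (ii) the same gap reappears, and the closing step is also underspecified. You correctly identify the Neumann cosine-mode matching, but the ``intermediate-value argument on a continuous spectral quantity'' glosses over two essential points in the paper's argument: the explicit choice $\pi^2/h_\eps^2=\pi^2/h_*^2+\eps^{3/4}$, which makes the quasimode error in \cref{mono_quasimode_2} of order $\eps^{5/4}+\eps^{k/2}=o(\eps)$ while placing $\lambda_0(C_{\eps,h_\eps})$ a definite $\eps^{3/4}$ above $\lambda_1(\Sigma)$; and, separately, the need to show that the \emph{one remaining} eigenvalue near $\lambda_1(\Sigma)$ — the one whose eigenfunction concentrates on the cylinder — stays above $\lambda_1(\Sigma)$. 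The latter requires the second family of quasimodes built from the Green's function of $\Delta-\lambda$ and the expansion \cref{lem_asymp_cyl}, where the symmetry $\phi_0(x_0)+\phi_0(x_1)=0$ kills the leading $\eps^{1/2}$ term and leaves $O(\eps\log(1/\eps))\ll\eps^{3/4}$. Two smaller inaccuracies: the non-ground-state rotationally symmetric Dirichlet eigenvalues on $C_{\eps,h}$ or $M_{\eps,h}$ are $(n\pi/h)^2$ and remain $O(1)$, not $\gtrsim\eps^{-2}$; and the Dirichlet-orthogonal remainder in the dichotomy is $O(\eps^{1/2}\log^{1/2}(1/\eps))$ in $L^2$ (from \cref{conv}), not $O(\eps)$.
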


\begin{rems}
\begin{inparaenum}[1)]
\item
Part \textit{(ii)} of \cref{main_1} can be generalized as follows.
If there is $a >0$ and distinct points $x_0,x_1 \in \Sigma$ such that $\phi(x_0) + a \phi(x_1) =0$
for any $\lambda_1(\Sigma)$-eigenfunction $\phi$ the same result holds for a slightly adapted construction of $\Sigma_{\eps,h}$.
In this case one has to shrink the length of the fibres of the cylinder by the factor $a$ on one half of the cylinder.
For reasons of clarity we restrict ourselves to $a=1$.
\\
\item
The conclusion from the first part holds for attaching handles as well.
There are two options to obtain this.
The first option is to keep the flat product metric on $C_{\eps,h}$ but attach it close to points $x_0$ and $x_\eps$, where $d(x_\eps,x_0) \sim \eps^l$
with $1<l<k$.
The second option is to use the construction described above with $a=a_\eps \to 0$ sufficiently fast.
\end{inparaenum}
\end{rems}

As a consequence\footnote{To be precise this needs an additional smoothing argument not carried out here.
This is only a minor problem (see \cite[Section 10]{MS3} for details).}, when combined with \cite{nayatani_shoda, petrides}, we obtain the following corollary.

\begin{cor} \label{cor_exist}
There exists a maximizing
metric, smooth away from at most finitely many conical singularities, for $\lambda_1(\Sigma,g)\area(\Sigma,g)$
on the surface $\Sigma$ of genus three.
\end{cor}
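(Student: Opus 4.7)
The plan is to apply the existence criterion of Petrides \cite{petrides}, which produces a maximizing metric (smooth off at most finitely many conical singularities) for $\lambda_1\cdot\area$ on the orientable surface $\Sigma_3$ of genus three as soon as one knows the strict topological monotonicity
\[
\Lambda(3) > \Lambda(2), \qquad \Lambda(\gamma) := \sup_g \lambda_1(\Sigma_\gamma,g)\,\area(\Sigma_\gamma,g),
\]
where the supremum runs over all Riemannian metrics on the orientable surface of genus $\gamma$. The entire proof therefore reduces to producing one genus-three metric whose normalized first eigenvalue strictly exceeds $\Lambda(2)$.

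To produce such a metric, I would start from the theorem of Nayatani--Shoda: the supremum $\Lambda(2)$ is attained by an explicit metric $g_2$, smooth off finitely many conical singularities, supported on the Bolza surface $\Sigma_2$. The Bolza surface carries a large automorphism group acting by isometries of $g_2$, and from the isotypical decomposition of the first eigenspace under this action one extracts the key geometric fact: every $\lambda_1(\Sigma_2,g_2)$-eigenfunction vanishes at each Weierstrass point of $\Sigma_2$. Fixing such a Weierstrass point $x_0$, \cref{main_1}(i), combined with the second remark following it (which replaces the cross-cap attachment at $x_0$ by a handle attachment while preserving the conclusion of part (i)), produces for every sufficiently small $\eps$ a non-continuous metric $g_{\eps,h_0}$ on the genus-three surface $\Sigma_{\eps,h_0}$ satisfying
\[
\lambda_1(\Sigma_{\eps,h_0},g_{\eps,h_0})\,\area(\Sigma_{\eps,h_0},g_{\eps,h_0}) > \lambda_1(\Sigma_2,g_2)\,\area(\Sigma_2,g_2) = \Lambda(2).
\]

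To finish, one smooths $g_{\eps,h_0}$ across the gluing circle: a standard cut-off and mollification argument, alluded to in the footnote and carried out in detail in \cite[Section 10]{MS3}, yields a nearby smooth, respectively smooth-off-conical-singularities, metric on $\Sigma_3$ for which the strict inequality survives. This gives $\Lambda(3)>\Lambda(2)$, and Petrides' existence theorem then delivers the desired maximizer. The genuinely substantive step of the argument is the common-vanishing property of every first eigenfunction of $g_2$ at a Weierstrass point, which rests on the symmetry and representation-theoretic analysis in \cite{nayatani_shoda} and is precisely what allows us to invoke part (i) of \cref{main_1} rather than the more delicate part (ii). Once that property is in place, the application of \cref{main_1} and the subsequent smoothing are essentially routine.
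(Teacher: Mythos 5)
Your proposal correctly identifies the two external ingredients (Petrides' gap criterion and the Nayatani--Shoda maximizer) and the overall reduction of the corollary to the strict inequality $\Lambda(3)>\Lambda(2)$, and it correctly flags the smoothing step deferred to \cite{MS3}. However, the crucial geometric claim you use to enter \cref{main_1} --- that every $\lambda_1(\Sigma_2,g_2)$-eigenfunction vanishes at a Weierstrass point --- is false for the Nayatani--Shoda metric, and as a result your invocation of part (i) together with the handle remark does not work.

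The metric $g_2$ on the Bolza surface is the pullback of the round metric on $\Sph^2$ under the hyperelliptic branched cover $\Phi\colon\Sigma_2\to\Sph^2$, and the first eigenspace contains the three functions $\phi_i=\ell_i\circ\Phi$, where $\ell_1,\ell_2,\ell_3$ are the ambient linear coordinates. Since $\sum_i\phi_i^2=|\Phi|^2\equiv 1$, these three eigenfunctions have \emph{no common zero} on $\Sigma_2$; in particular they do not all vanish at any Weierstrass point, nor at any other point. So there is no $x_0$ to which \cref{main_1}(i) applies, and the handle variant of part (i) obtained from the remark fails for the same reason. (Even if such a common zero existed, the Weierstrass points are exactly the six conical singularities of $g_2$, whereas the construction of $\Sigma_{\eps,h}$ requires $g$ to be smooth near the attaching point.)

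What actually works is part (ii): the branch locus of $\Phi$ is the vertex set of a regular octahedron, which is invariant under the antipodal map of $\Sph^2$; the antipodal map therefore lifts to a fixed-point-free orientation-reversing isometric involution $\tau$ of $(\Sigma_2,g_2)$, and $\phi_i\circ\tau=\ell_i\circ(-\Phi)=-\phi_i$ for each $i$. Choosing any $x_0$ away from the Weierstrass points, with $x_1=\tau(x_0)$, one gets $\phi(x_0)+\phi(x_1)=0$ for every first eigenfunction $\phi$, and both $x_0,x_1$ are smooth points of $g_2$, so \cref{main_1}(ii) applies and produces a genus-three metric with $\lambda_1\cdot\area>\Lambda(2)$. (One must also check that the first eigenspace is not strictly larger than $\langle\phi_1,\phi_2,\phi_3\rangle$, and deal with the conical singularities of $g_2$ itself when invoking \cref{main_1}; both are addressed in the smoothing and approximation discussion of \cite[Section 10]{MS3}.) So the structure of your argument is right, but the symmetry you extract from the Bolza surface and the consequent choice of part (i) over part (ii) are the wrong ones.
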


\smallskip

In \cite{MS3} we provide a construction that gives the monotonicity results from \cref{main_1} for any closed Riemannian surface without any symmetry assumptions.
In particular, we obtain the analogue of \cref{cor_exist} for closed surfaces of any topological type.
The construction in \cite{MS3} is motivated by the negative result \cref{main_2} below and is significantly more involved than the construction carried out in this article.
We think that it is worth understanding the precise asymptotics for the surfaces $\Sigma_{\eps,h}$ to get an idea of the problems
that the construction in \cite{MS3} has to overcome.

\smallskip

We denote by $h_*$ the unique positive parameter such that
$$
\lambda_0(C_{\eps,h_*}) = \lambda_0(M_{\eps,h_*}) = \lambda_1(\Sigma).
$$
The range of parameters in the second part of \cref{main_1} provided by our argument is very concrete.
In particular, we have that
\begin{equation} \label{eq_sym_good_range}
h_\eps = h_* + o(\eps^{1/2}).
\end{equation}
Our second main result below gives precise asymptotics for this range of parameters $h$ if we do not have the symmetry assumption from \cref{main_1}.
In particular it shows that the first eigenvalue normalized by area decreases in this range.

Let us start with the case of attaching a cross cap.
For dimensional reasons we may choose an orthonormal basis $(\phi_0,\dots,\phi_{K-1})$ of the $\lambda_1(\Sigma)$-eigenspace such that
$$
\phi_1(x_0) = \dots = \phi_{K-1}(x_0) =0.
$$
and
$$
\phi_0(x_0) \geq  0.
$$

Fix some large $D>0$ and let $f_\eps \colon [h_*-D\eps^{1/2},h_*+D\eps^{1/2}] \to (0,\infty)$  be the unique positive
function\footnote{Note that for $h$ fixed the equation below is a quadratic equation for $f_\eps$, that has two real solutions with different signs.}
given implicitly
by
\begin{equation} \label{eq_def_f}
f_\eps^2 -1 = - \left(\frac{h}{2\pi}\right)^{3/2} \frac{\lambda_1(\Sigma)-\lambda}{\phi_0(x_0)} \eps^{-1/2} f_\eps,
\end{equation}
where $\lambda$ denotes the Dirichlet eigenvalue $\lambda_0(M_{\eps,h})$ and we already assume that $\phi_0(x_0) > 0$.

\begin{theorem} \label{main_2}
Let $\Sigma_{\eps,h}$ be obtained by attaching a cross cap as above and assume $\phi_0(x_0) >0$.
Then we have that
\begin{equation} \label{eq_main_2}
\lambda_1(\Sigma_{\eps,h}) = \lambda_1(\Sigma) - f_\eps(h)^{-1} \lambda_1(\Sigma) \phi_0(x_0) \eps^{1/2} + O(\eps \log(1/\eps))
\end{equation}
uniformly for $\eps \to 0$ and $h \in [h_*-D\eps^{1/2},h_*+D\eps^{1/2}]$ as long as $D>0$ is fixed,
where $f_\eps$ is defined by \eqref{eq_def_f}.
\end{theorem}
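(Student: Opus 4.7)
My strategy is a matched asymptotic expansion combined with the Rayleigh--Ritz method on a $(K+1)$-dimensional subspace of $H^1(\Sigma_{\eps,h})$ spanned by quasi-modes: cut-off/extended versions $\tilde\phi_i$ of the bulk eigenfunctions $\phi_i$, together with an extension $\tilde v_0$ of the first Dirichlet eigenfunction $v_0 = \sqrt{2/h}\sin(\pi t/h)$ on the cross cap. Because $\phi_i(x_0) = 0$ for $i \geq 1$, the quasi-modes $\tilde\phi_i$ with $i\geq 1$ interact with one another and with $\tilde v_0$ only at very high order in $\eps$, and so produce $K-1$ eigenvalues of $\Sigma_{\eps,h}$ very close to $\lambda_1(\Sigma)$, well above the claimed value. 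The entire asymptotic behavior is therefore governed by a $2\times 2$ block involving $\tilde\phi_0$ and $\tilde v_0$.

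The key computation is the coupling between these two quasi-modes, which I would extract via matched asymptotics at the seam. On the cross cap I expand any candidate eigenfunction in Fourier modes in $\theta$: nonzero modes satisfy $u_n'' = (n^2/\eps^2 - \lambda)u_n$ and so decay exponentially on the scale $\eps$ in $t$, contributing only boundary-layer corrections. The zero mode, forced by the identification $(\theta,t)\sim(\theta+\pi,h-t)$ to be symmetric about $t=h/2$, must take the form $u_0(t) = \alpha\cos(\sqrt{\lambda}(t-h/2))$, which imposes the effective relation $u_0'(0) = \sqrt{\lambda}\tan(\sqrt{\lambda}h/2)\,u_0(0)$ that becomes singular exactly when $\lambda\to \pi^2/h^2$. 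On the bulk side, in the conformally flat coordinates around $x_0$, the zero Fourier mode of the putative eigenfunction is a solution of $-\Delta u = \lambda u$ on the annulus $B_R\setminus B_{\eps^k}$; its radial part decomposes into a regular piece matching $a_0\phi_0(x_0)$ at scale $R$ and a logarithmic piece with coefficient $B$. Matching value and flux at $r=\eps^k\leftrightarrow t=0$, while carefully accounting for the circumference mismatch between $2\pi\eps^k$ and $2\pi\eps$ induced by the non-continuous metric, produces precisely the implicit relation \eqref{eq_def_f} satisfied by $f_\eps$ with $\lambda$ being the cross cap Dirichlet eigenvalue $\pi^2/h^2$. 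Substituting into the $2\times 2$ secular equation then yields the claimed asymptotic expansion.

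To turn this into two-sided bounds, the upper bound is Rayleigh--Ritz with the optimal combination of $\tilde\phi_0$ and $\tilde v_0$. For the lower bound I would normalize a true first eigenfunction $u_\eps$ in $L^2$, project onto the quasi-modes, and prove an $H^1$-smallness estimate for the orthogonal complement via elliptic regularity on the bulk, exponential decay of the higher Fourier modes on $M_{\eps,h}$, and min-max applied jointly to the $K+1$ lowest eigenvalues. Plugging the projected eigenfunction back into the eigenvalue equation then forces $\lambda_1(\Sigma_{\eps,h})$ to satisfy the same implicit relation up to $O(\eps\log(1/\eps))$.

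The main obstacle is the precise bookkeeping of logarithmic contributions at the seam, complicated by the circumference mismatch between $\partial B_{\eps^k}$ and the cross cap boundary. The critical scaling $\eps^{1/2}$ for $h-h_*$ and the particular coefficient $(h/(2\pi))^{3/2}/\phi_0(x_0)$ appearing in \eqref{eq_def_f} emerge from balancing the near-resonant factor $\tan(\sqrt{\lambda}h/2)$ against the leading $\log(1/\eps)$ in the bulk expansion; getting this balance uniformly correct across the full range $h\in[h_*-D\eps^{1/2},h_*+D\eps^{1/2}]$, and simultaneously controlling the remainder of the boundary layer on the cross cap, is where the condition $k>4$ is used and what drives the final error term $O(\eps\log(1/\eps))$ rather than a cruder $O(\eps^{1/2})$.
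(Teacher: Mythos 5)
Your proposal takes a genuinely different route from the paper -- a primal variational scheme (Rayleigh--Ritz on a $(K+1)$-dimensional trial space) driven by classical matched asymptotics at the seam -- whereas the paper works dually: it tests true eigenfunctions $u_{\eps,h}$ against two carefully built quasimodes $\tilde\psi_{\eps,h}$ and $\chi_{\eps,h}$ concentrated on the cross cap (built from a Green's function of $\Delta-\lambda$ with the $\phi_0$-kernel subtracted), compares the resulting expansions (Lemmas~\ref{lem_asymp_1} and~\ref{lem_asymp_2}) to deduce the implicit relation \eqref{eq_def_f} for $m_{\eps,h}/n_{\eps,h}$, and then decomposes the bulk quasimode $\phi_{0,\eps}$ against the actual eigenbasis to identify which branch of the quadratic corresponds to $\lambda_1$. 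Conceptually your idea could work, but as written it has several concrete gaps that prevent it from being a proof.

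First, the claim that matching value and flux at $r=\eps^k$ \emph{produces} \eqref{eq_def_f} is asserted, not derived. The constant $(h/(2\pi))^{3/2}/\phi_0(x_0)$ and the sign structure in \eqref{eq_def_f} come from a nontrivial interplay between the $\phi_0$-component of the solution operator of $(\Delta-\lambda)$ on $\Sigma$, the $L^1$-mass of $\psi_{\eps,h}$, and the normal derivative scaling $\sim\eps^{1/2}$; in the paper all three appear explicitly in the quasimode construction (\eqref{eq_l1_norm}, \eqref{eq_green_proj_kernel}, the normal-derivative computation) before the relation can be extracted. Your sketch elides exactly this bookkeeping, which you yourself flag as ``the main obstacle.''

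Second, your Rayleigh--Ritz upper bound cannot reach precision $O(\eps\log(1/\eps))$ with naive cut-off trial functions: a simple cut-off of $\phi_0$ has $H^1$ error $\sim(\log 1/\eps)^{-1/2}$, and extending $v_0$ by zero gives a quasimode error $\sim\eps^{1/2}\log(1/\eps)$, which the paper explicitly notes is ``not good enough.'' To hit the target precision you would in effect be forced to build the same Green's-function-corrected extensions the paper uses; this is not implicit in ``matched asymptotics'' without doing the work. Third, the lower bound is the genuinely hard direction and your sketch (``plugging the projected eigenfunction back into the eigenvalue equation'') does not confront the error propagation from trial-space entries to the eigenvalues of the $2\times2$ secular matrix, nor the selection of the correct root of the quadratic -- which in the paper requires the dedicated argument around \eqref{eq_decomp}--\eqref{eq_comp_u_v} and a separate counting of eigenvalues near $\lambda_1(\Sigma)$. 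Finally, the uniform treatment of the near-resonance as $h\to h_*$, where $\tan(\sqrt\lambda h/2)$ blows up like $\eps^{-1/2}$, is identified but not resolved; the paper handles this by explicitly projecting out the kernel of $\Delta-\lambda_1(\Sigma)$ in the construction of $H_\lambda$. These gaps would all have to be filled before your route constitutes a proof.
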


\begin{rem}
There is an analogous result for the case of attaching a cylinder near points $x_0$ and $x_1$
such that there is a $\lambda_1(\Sigma)$-eigenfunction $\phi$ with
$
\phi(x_0)+\phi(x_1) \neq 0.
$
In this case we may
choose an orthonormal basis $(\phi_0,\dots,\phi_{K-1})$ of the $\lambda_1(\Sigma)$-eigenspace such that
$$
\phi_1(x_0) + \phi_1(x_1) = \dots = \phi_{K-1}(x_0) + \phi_{K-1}(x_1) =0.
$$
and
$$
\phi_0(x_0) + \phi_0(x_1) > 0.
$$
One then has a similar expansion with $f_\eps$ the unique positive function 
defined by
\begin{equation} \label{eq_def_f_cyl}
f_\eps^2-1
=
-\frac{1}{2}  \left(\frac{h}{\pi}\right)^{3/2} \frac{\lambda_1(\Sigma)-\lambda}{\phi_0(x_0)+\phi_0(x_1)} \eps^{-1/2} f_\eps
\end{equation}
\end{rem}

\begin{rem}
With some minor changes our techniques can be adapted to show that the conclusion of \cref{main_2} holds for $k=1$, as well.
This still works since $\eps^{1/2} \gg \eps \log(1/\eps)$ for $\eps$ small.
In contrast to this, we do not know if the same applies to \cref{main_1}, since $\eps \log(1/\eps) \gg \eps$ for $\eps$ small.
\end{rem}

\smallskip

In order to reduce the technicalities a bit we only provide the proof in the case in which the cross cap is attached to a point in which not all the eigenfunctions vanish.
The argument in the case of a cylinder is completely analogous but the computation are longer due to more lower order correction terms necessary in that case.

It is worth pointing out that 
$$
f_\eps(h_*)=1
$$
for any $\eps>0$.
More generally, $f_\eps$ is positive and uniformly bounded from below on scales $h_* \pm \tau \eps^{1/2}$ for fixed $\tau >0$.

The key point to prove is that the function $f_\eps$ describes the ratio of concentration on $\Sigma$ and $M_{\eps,h}$ for the first eigenfunction on $\Sigma_{\eps,h}$.
More precisely, up to a small error term, we have that 
\begin{equation} \label{eq_meaning_f}
f_\eps(h) = \frac{\|u_{\eps,h}\|_{L^2(\Sigma \setminus B_{\eps^k})}}{\|u_{\eps,h}\|_{L^2(M_{\eps,h})}},
\end{equation}
where $u_{\eps,h} \colon \Sigma \to \IR$ is a normalized $\lambda_1(\Sigma_{\eps,h})$-eigenfunction.

With some more care for the error terms our arguments can in fact be used to improve \cref{main_2}, e.g.\ to uniform control in $[h_*+\eps^{1/3},h_*-\eps^{1/3}]$.
In view of \eqref{eq_sym_good_range} the parameters on scales $h_* \pm \eps^{1/2}$ seem to be the most interesting ones.
Also the main transition happens at these scales: 
Consider the (a priori not necessarily continuous) function $c_\eps \colon h \mapsto \|u_{\eps,h}\|_{L^2(\Sigma \setminus B_{\eps^k})}$,
where $u_{\eps,h}$ is a choice of a normalized $\lambda_1(\Sigma_{\eps,h})$-eigenfunction.
For $\eps$ fixed but very small, this function is close to $1$
for $h=h_0$ and close to $0$ for $h=h_1$.
In fact, we even have the following much stronger conclusion.
Given any $r \in (0,1)$, there is $D>0$ such that $\|u_{\eps,h}\|_{L^2(\Sigma \setminus B_{\eps^k})} \in (0,r) \cup (1-r,1)$ outside of $[h_*-D\eps^{1/2},h_*+D\eps^{1/2}]$ (for $\eps$ sufficiently small depending on $r$.)
On the other hand, the change of $c_{\eps}$ from $r$ to $1-r$ is precisely described through the function $f_\eps$.
In particular, it does not come from any discontinuity of $c_\eps$ but from the first eigenfunction being simple and changing its concentration of $L^2$-norm.

\subsection{Main problems and ideas} \label{subsec_ideas}
Our analysis rests on two ingredients: Firstly, pointwise bounds on eigenfunctions with bounded energy along the boundary of the attached regions.
Secondly, optimal approximate solutions to the eigenvalue equation on $\Sigma_{\eps,h}$ constructed from solutions on $\Sigma$ and the collapsing flat part, respectively.

The pointwise bounds on eigenfunctions are a bit subtle because of the discontinuity of the metric in precisely the region we are working in.
However, this can be obtained from standard elliptic estimates by scaling and an application of the De Giorgi--Nash--Moser estimates.
This exploits the fact that the discontinuity of the metric is purely conformal in nature and that the Laplace operator is conformally covariant in dimension two.

Having these bounds at hand we can then proceed to show that the spectrum of $\Sigma_{\eps,h}$ resembles the union of the spectra of $\Sigma$ and $[0,h]$ (with Dirichlet boundary conditions) for $\eps$ sufficiently small.
A similar conclusion follows on the level of eigenfunctions.
We show that these are close in $L^2$ to a linear combination of a $\lambda_1(\Sigma)$- and a $\lambda_0(C_{\eps,h})$-eigenfunction.  
At this stage we do not have any sufficiently strong control on the rate of convergence to conclude our main results.

In order to improve our estimates on the rate of convergence we then construct approximate solutions to the eigenvalue equation on $\Sigma_{\eps,h}$ of two types.
Let us consider the case of attaching a flat cross cap $M_{\eps,h}$ first.

The first type is given by extending $\lambda_1(\Sigma)$-eigenfunctions appropriately.
More precisely, we start from $\phi \colon \Sigma \to \IR$ an $L^2(\Sigma)$-normalized $\lambda_1(\Sigma)$-eigenfunction.
We then extend this after a suitable interpolation by the constant $\phi(x_0)$  to $M_{\eps,h}$.
By testing an eigenfunction $u_{\eps,h}$ against such a quasimode one obtains an asymptotic expansion of the corresponding eigenvalue.
The largest error term in this expansion contains the term
\begin{equation} \label{eq_ideas_1}
\phi(x_0) \int_{ M_{\eps,h}} u_{\eps,h}.
\end{equation}

The case of the flat cylinders is in principal similar.
However, in the situation of the second item of \cref{main_1} there is a more sensitive way of extending $\phi$ to $C_{\eps,h}$ exploiting the fact that $\mu_1(C_{\eps,h})=\lambda_0(C_{\eps,h})$ and that the Neumann eigenfunction glues well at the two boundary components precisely because of the symmetry assumption.
This gives a much better approximate solution than the first construction at least when $h$ is close to $h_*$, i.e.\ when $|\lambda_1(\Sigma)-\lambda_0(C_{\eps,h})|$ is small.

The second type of approximate solutions is constructed out of a normalized $\lambda_0(M_{\eps,h})$-eigenfunction $\psi_{\eps,h}$.
Because of the nature of the collapse of $M_{\eps,h}$ we have that $\psi_{\eps,h}=\eps^{-1/2}\psi_{h}$, so that
\begin{equation} \label{eq_ideas_2}
\int_{\partial M_{\eps,h}}\partial_\nu \psi_{\eps,h}\, d \mathcal{H}^1 \sim \eps^{1/2},
\end{equation}
which is essentially the scale on which $\psi_{\eps,h}$ fails to solve the eigenvalue equation on $\Sigma_{\eps,h}$, cf.\ the discussion in the beginning of \cref{sec_quasi_handle}.
In turns out that this is typically sharp. 
In order to construct an optimal approximate solution we use Green's kernel of $\Delta-\lambda_0(M_{\eps,h})$
with pole at $x_0$ scaled by roughly $\eps^{1/2}$ in order to cancel out the normal derivative \eqref{eq_ideas_2}.
After taking care of the presence of the non-trivial kernel of this operator at $h=h_*$ and testing against an eigenfunction $u_{\eps,h}$ this gives an asymptotic expansion with leading order error term containing
\begin{equation} \label{eq_ideas_3}
\sum_{i=0}^{K-1} \phi_i(x_0) \int_{\Sigma \setminus B_{\eps^k}} \phi_i u_{\eps,h} \eps^{1/2},
\end{equation}
where $(\phi_0,\dots,\phi_{K-1})$ is an orthonormal basis of $\lambda_1(\Sigma)$-eigenfunctions.

Using the approximate decomposition of eigenfunctions, we can also show that the term at \eqref{eq_ideas_1} is comparable to $n \eps^{1/2}$,
if $u_{\eps,h}$ is given approximately by $n \psi_{\eps,h}$ for the normalized, positive $\lambda_0(M_{\eps,h})$-eigenfunction $\psi_{\eps,h}$.
On the other hand, a similar argument gives that \eqref{eq_ideas_3} can be related to the $L^2$-norm of $\left. u_{\eps,h} \right|_{\Sigma \setminus B_{\eps^k}}$.
In summary, we find that testing against a quasimode of one type, the leading order correction term corresponds exactly to the other type of the spectrum.
While this gives a first strong hint on the interaction of the two parts of the spectrum, our proof of \cref{main_2} is actually a bit different exploiting the correction term on the next largest scale for the quasimodes concentrated on $M_{\eps,h}$.

Finally, let us return to the second item of \cref{main_1}.
Besides the improved quasimodes concentrated on $\Sigma$ that we construct in this case also the quasimodes concentrated on $C_{\eps,h}$
have a more favourable behaviour in this situation.
We have to use a sum of two Green's functions with poles at $x_0$ and $x_1$, respectively.
Therefore, \eqref{eq_ideas_3} turns into
$$
\sum_{i=0}^{K-1} (\phi_i(x_0)+\phi_i(x_1)) \int_{\Sigma \setminus B_{\eps^k}} \phi_i u_{\eps,h} \eps^{1/2} = 0
$$
improving the convergence rate to $\eps \log(1/\eps)$, which turns out to be strong enough to conclude by choosing $h=h_\eps$ appropriately.

\subsubsection{A few words on \cite{MS3}}
In \cite{MS3} we provide a much more involved construction attaching a truncated, degenerating hyperbolic cusp to $\Sigma$ in order to obtain the monotonocity of the normalized first eigenvalue without any additional assumptions.
Ultimately, the sharp convergence rate on scale $\eps^{1/2}$ in \cref{main_2} orginates from \eqref{eq_ideas_2} and should be expected as long as the collpasing part
resembles a model with an isolated eigenvalue at $\lambda_0$ in the limit.
While the techniques from this article give the negative result \cref{main_2} they do not apply to get any convergence rate below $\eps \log(1/\eps)$.
Therefore, in \cite{MS3} we have to develop an entirely different approach.
Of course, we still rely on some of the more technical ideas from here in particular the robust pointwise bounds on eigenfunctions
 \cref{max_eigen_neu_ell_est} and the construction of optimal quasimodes in \cref{sec_quasi_handle}. 
We would also like to point out that there is some connection to the second item of \cref{main_1}.
Recall that its proof crucially relies on the fact that $\lambda_0(C_{\eps,h})=\mu_1(C_{\eps,h})$.
Maybe surprisingly, the analogous fact for the truncated hyperbolic cusp is one of the driving forces of the  proof in \cite{MS3} but exploited in a completely different way.

For the sake of readability and in order to keep both papers self-contained we decided to include the corresponding arguments here and in \cite{MS3}.
In particular, corresponding versions of the robust pointwise bound \cref{max_eigen_neu_ell_est} and the construction of good quasimodes in \cref{sec_quasi_handle} are  two of the key technical ingredients in \cite{MS3}.

\smallskip

\textbf{Outline.}
\cref{sec3} contains pointwise estimates for the eigenfunctions of $\Sigma_{\eps,h}$.
The spectrum of $\Sigma_{\eps,h}$ and the convergence of the eigenfunctions on $\Sigma_{\eps,h}$
as $\eps \to 0$ are discussed in \cref{limit}.
 In \cref{sec4} we construct approximate eigenfunctions on $\Sigma_{\eps,h}$ which will be used in \cref{sec5} to prove the main results, 
 i.e.\
 \cref{main_1} and \cref{main_2}.

\smallskip

\textbf{Acknowledgements.}
The first named author would like to thank his former advisor Werner Ballmann for a helpful discussion on Green's functions.
The second named author would like to thank the Max Planck Institute for Mathematics in Bonn for financial support and excellent working conditions.
We would also like to thank the anonymous referee for an extremely detailed report that helped us to significantly improve the presentation and readability of the manuscript.

\section{Pointwise Estimates for Eigenfunctions}\label{sec3} 
In this section we provide estimates for the eigenfunctions in the attaching region.
We will use these later to obtain closeness of the restrictions to the collapsing part to Dirichlet-eigenfunctions.

Let $x \in \Sigma$ be the center of a ball $B_{\eps^k}(x)$ which is removed from $\Sigma$ in the construction of $\Sigma_{\eps,h}$.
In the case of attaching a cylinder we have $x \in \{x_0,x_1\}$, in the case of attaching a cross cap we have $x=x_0$.

In order to understand the spectrum of $\Sigma_{\eps,h}$, we need some bounds for eigenfunctions with bounded energy on $\partial B_{\eps^k}(x)$.
For ease of notation, we assume that the ball $B_1(x) \subset \Sigma$
can be endowed with conformal coordinates.
In the case of attaching a cylinder, we also assume that the two balls $B_1(x_0)$ and $B_1(x_1)$ are disjoint.

\begin{lemma} \label{max_eigen_neu_ell_est}
Let $u_{\eps,h}$ be an $L^2$-normalized eigenfunction on $\Sigma_{\eps,h}$ with eigenvalue $\lambda_{\eps,h} \leq \Lambda$.
There is a constant $C$ depending on $\Lambda$ and $k$ (from the construction of $\Sigma_{\eps,h}$), such that the following holds.
If we use Euclidean polar coordinates $(r,\theta)$ centered at $x$ we have the uniform pointwise bounds
\begin{equation} \label{max_eigen_sup_bd}
	|u_{\eps,h}|(r,\theta) \leq C \log \left(1/r \right),
\end{equation}
for $ \eps^k \leq r \leq 1/2$
and
\begin{equation} \label{max_eigen_lipschitz_bd}
	|\nabla u_{\eps,h}| (r, \theta) \leq C/r
\end{equation}
for $2 \eps^k \leq r \leq 1/2.$
\end{lemma}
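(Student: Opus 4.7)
The plan is to exploit the conformal covariance of the Laplacian in dimension two to rewrite the eigenvalue equation in a fixed Euclidean background, and then apply De Giorgi--Nash--Moser (DGNM) and Schauder estimates on appropriately rescaled dyadic annuli. In the conformal chart near $x$ where $g = f g_e$, the identity $\Delta_g = f^{-1}\Delta_e$---valid only in dimension two---turns $-\Delta_g u_{\eps,h}= \lambda_{\eps,h} u_{\eps,h}$ into
\[
-\Delta_e u_{\eps,h} = \lambda_{\eps,h}\, f\, u_{\eps,h} \qquad \text{on } B_1(x)\setminus B_{\eps^k}(x),
\]
with $f$ smooth and bounded uniformly in $\eps$. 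The $L^2$-normalization and the eigenvalue bound give $\|u_{\eps,h}\|_{L^2}\leq 1$ and, again by conformal invariance of the Dirichlet energy in dimension two, $\int|\nabla_e u_{\eps,h}|^2\,dx \leq \Lambda$ on the same region. The handle therefore never enters the local analysis directly; its presence is felt only through the trace of $u_{\eps,h}$ at the gluing circle $\{r=\eps^k\}$.

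The core step is dyadic scaling. For each $r_0 \in [2\eps^k, 1/2]$, I would set $\tilde u(y):= u_{\eps,h}(r_0 y)$ on the model annulus $\mathcal A = \{1/2 \leq |y| \leq 2\}$; then
\[
-\Delta_e \tilde u = (r_0^2 \lambda_{\eps,h})\, f(r_0 y)\, \tilde u,
\]
and the coefficient is bounded uniformly by $\Lambda\|f\|_\infty$. DGNM and interior Schauder on $\mathcal A$ yield $\sup_{\mathcal A'} |\tilde u| + \sup_{\mathcal A'}|\nabla \tilde u| \leq C\,\|\tilde u\|_{L^2(\mathcal A)}$ on an inner subannulus $\mathcal A' \subset \mathcal A$. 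Unwinding the scaling gives, with $A_{r_0}=\{r_0/2\leq|x|\leq 2r_0\}$,
\[
\sup_{|x|\sim r_0}|u_{\eps,h}| \leq \frac{C}{r_0}\|u_{\eps,h}\|_{L^2(A_{r_0})}, \qquad \sup_{|x|\sim r_0}|\nabla u_{\eps,h}|\leq \frac{C}{r_0^2}\|u_{\eps,h}\|_{L^2(A_{r_0})}.
\]
The outer region $r\in[1/2,1]$ is handled by the same estimates on a fixed, bounded domain and yields $\|u_{\eps,h}\|_{C^1(\{r\geq 1/2\})}\leq C(\Lambda)$.

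The main obstacle is converting these annular $L^2$ estimates into the claimed pointwise bounds uniformly in $\eps$. The trivial bound $\|u_{\eps,h}\|_{L^2(A_{r_0})}\leq 1$ only gives $|u_{\eps,h}|\leq C/r_0$, which is far too weak compared with $\log(1/r_0)$ when $r_0=\eps^k$. The improvement requires controlling the annular $L^2$-mass essentially linearly in $r_0$, and this is where the delicate work lies. I would combine the dyadic quasi-orthogonality $\sum_j\|u_{\eps,h}\|_{L^2(A_{r_0 2^{-j}})}^2 \leq C$ with a Caccioppoli-type inequality relating the $L^2$-norm on an annulus to $\|\nabla u_{\eps,h}\|_{L^2}$, the key two-dimensional fact being that the only Laplace mode on a punctured disk which fails to decay polynomially as $r\to 0$ is the logarithmic one---whose value is $\log(1/r)$ and whose gradient is exactly $1/r$, matching \eqref{max_eigen_sup_bd} and \eqref{max_eigen_lipschitz_bd}. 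Piecing the dyadic estimates together then delivers the lemma.
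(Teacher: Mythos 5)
There is a genuine gap, and it sits exactly where you flagged it. Your dyadic estimate in the form $\sup_{|x|\sim r_0}|\nabla u_{\eps,h}|\leq C r_0^{-2}\|u_{\eps,h}\|_{L^2(A_{r_0})}$ loses a power of $r_0$ compared with the claimed bound, and the repair you sketch -- quasi-orthogonality of the dyadic $L^2$-masses plus a Caccioppoli inequality plus a heuristic about the logarithmic mode on a punctured disk -- is not actually carried out and, as stated, does not close: $\sum_j\|u_{\eps,h}\|_{L^2(A_{2^{-j}r_0})}^2\leq 1$ only controls the \emph{average} decay of the annular masses, not each individual one, so you cannot extract $\|u_{\eps,h}\|_{L^2(A_{r_0})}\lesssim r_0$ from it. The paper avoids this entirely by a simpler scaling trick: after setting $w_r(z)=u_{\eps,h}(rz)$, differentiate the rescaled equation and estimate $\nabla w_r$ by its own $L^2$-norm, $\sup_{1\leq s\leq 2}|\nabla w_r|\leq C r^2+C\|\nabla w_r\|_{L^2(B_3\setminus B_{1/2})}$. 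The crucial observation is that $\|\nabla w_r\|_{L^2(B_3\setminus B_{1/2})}=\|\nabla u_{\eps,h}\|_{L^2(B_{3r}\setminus B_{r/2})}\leq\Lambda^{1/2}$ because the Dirichlet energy is scale- and conformally-invariant in dimension two; there is no extra factor of $r$ to fight at all. Unwinding the scaling gives $|\nabla u_{\eps,h}|\leq C/r$ directly, and the sup bound then follows by \emph{integrating} this radial gradient bound inward from $r=1/2$ (where $u_{\eps,h}$ is uniformly bounded by interior estimates), which is what produces the logarithm. Anchoring each shell individually by DGNM, as you propose, never produces $\log(1/r)$; the logarithm comes from accumulating the $1/r$ gradient bound over $O(\log(1/r))$ dyadic scales.

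The second gap is the innermost region $\eps^k\leq r\leq 2\eps^k$, which you mention ("the handle is felt only through the trace at the gluing circle") but then do not treat. Any annulus touching $r=\eps^k$ protrudes through the gluing circle into the flat cylinder or cross cap, where the metric $g_{\eps,h}$ jumps discontinuously, so neither your rescaled Schauder estimate nor the interior elliptic estimate applies there as written. The paper handles this with a separate argument: it builds a neighborhood $V(4)$ of the gluing circle that straddles both sides, makes a \emph{singular} conformal change of metric $l_{\eps,h}=f_{\eps,h}\Phi_{\eps,h}^*g_{\eps,h}$ with a piecewise-constant conformal factor ($\eps^{-2k}$ on the $\Sigma$ side, $\eps^{-2}$ on the handle side) chosen so that $l_{\eps,h}$ is uniformly comparable to a fixed metric, subtracts the mean value to obtain Poincar\'e control, and then applies the inhomogeneous De Giorgi--Nash--Moser estimate to the function $w_{\eps,h}$ solving the transformed equation. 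That oscillation bound, being scale-invariant, then closes the sup estimate across the discontinuity. Without an argument of this type, your proof does not establish \eqref{max_eigen_sup_bd} down to $r=\eps^k$.
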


Note that \cref{max_eigen_neu_ell_est} is related\footnote{It is not hard to improve \eqref{max_eigen_sup_bd} to $\log^{1/2}(1/\eps)$, but we do not need this.}
 to the integral bound
\begin{equation} \label{eq_int_bd_sobolev}
\int_{\partial B_{\eps^k}(x)} |\varphi|^2 d \mathcal{H}^1
\leq 
C \eps^k \log(1/\eps^k) \|\varphi \|_{W^{1,2}(B_1(x) \setminus B_{\eps^k} (x))}^2
\end{equation}
that holds for any $\varphi \in W^{1,2}(\Sigma_{\eps,h})$ and which can be proved by a straightforward computation in polar coordinates.

\begin{proof}
Recall that we have identified a conformally flat neighborhood of $x$ with $B_1=B(0,1)\subset \IR^2,$
such that $x=0.$
First, observe that, up to radius $2\eps^k$ \eqref{max_eigen_sup_bd} is a direct consequence of \eqref{max_eigen_lipschitz_bd}.
In fact, by the standard elliptic estimates \cite[Chapter 5.1]{taylor_1},  the functions $u_{\eps,h}$ are 
uniformly bounded in $C^\infty$
within compact subsets of $\Sigma \setminus \{x_0\}$.
Given this, we can integrate the bound \eqref{max_eigen_lipschitz_bd} from $\partial B_{1/2}$ to $\partial B_r$ and find \eqref{max_eigen_sup_bd}.

The bound \eqref{max_eigen_lipschitz_bd} follows from standard elliptic estimates after rescaling the scale $r$ to a fixed scale.
More precisely, we consider the rescaled functions $w_r(z):=u_{\eps,h}(r z).$
On $B_1\setminus B_{\eps^k}$ the metric of $\Sigma$ is uniformly bounded from above and below by the Euclidean metric.
Hence we can perform all computations in the Euclidean metric.

Since the Laplace operator is conformally covariant in dimension two, $w_r$ solves the equation
\begin{equation} \label{max_eigen_eqn_sc}
\Delta_e w_r=r^2 f_r \lambda_{\eps,h} w_r,
\end{equation}
with $f_r(z)=f(rz)$ a smooth function and $\Delta_e$ the Euclidean Laplacian.
Since $f \in C^\infty,$ we have uniform $C^\infty$-bounds on $f_r$ for $r \leq 1.$
Taking derivatives, we find that
\begin{equation} \label{max_eigen_eqn_der}
\Delta_e \nabla w_r = r^2 \lambda_{\eps,h} \nabla (f_r w_r),
\end{equation}
where also the gradient is taken with respect to the Euclidean metric.
Since $\lambda_{\eps,h} \leq \Lambda$ the scaling invariance of the Dirichlet energy implies that
\begin{equation*}
\begin{split}
\lambda_\eps^2 \int_{B_3 \setminus B_{1/2}}  |\nabla (f_r w_r)|^2 
&=
 \lambda_\eps^2 \int_{B_{3r} \setminus B_{r/2}} |\nabla (fu_{\eps,h})|^2
 \\
 &\leq
 2 \lambda_{\eps,h}^2 \int_{B_{3r} \setminus B_{r/2}} f^2 |\nabla u_{\eps,h}|^2 + u_{\eps,h}^2 |\nabla f|^2
 \\
&\leq
 C
\end{split}
\end{equation*}
by assumption.
In particular,
the right hand side of \eqref{max_eigen_eqn_der} is bounded by 
$Cr^2$ in $L^2(B_3\setminus B_{1/2}).$
Therefore, by standard elliptic estimates \cite[Chapter 5.1]{taylor_1}, we have
\begin{equation*}
\sup_{ \{1\leq s \leq 2\} }|\nabla w_r|(s, \theta) \leq C r^2 + C |\nabla w_r|_{L^2(B_3 \setminus B_{1/2})} \leq C,
\end{equation*}
which scales to
\begin{equation*}
\sup_{\{r\leq s \leq 2r\}}|\nabla u_{\eps,h}| (s,\theta) \leq C/r,
\end{equation*}
with $C$ independent of $r.$
This proves the estimate \eqref{max_eigen_lipschitz_bd}, hence also \eqref{max_eigen_sup_bd} for $r \geq 2 \eps^k$ as explained above.

To get the estimate \eqref{max_eigen_sup_bd} for the remaining radii we invoke the De Giorgi--Nash--Moser estimate.
For $\alpha>1$ consider the two sets
$$
U_{\eps,h}^{(1)}(\alpha)= B_{\alpha \eps^k} \setminus B_{\eps^k} \subset \Sigma 
$$
and
$$
U_{\eps,h}^{(2)}(\alpha) = \mathbb{S}^1(\eps) \times [0,\alpha \eps) \subset M_{\eps,h} \ \text{or} \ C_{\eps,h}.
$$
Then
\begin{equation*}
U_{\eps,h}(\alpha)=
U_{\eps,h}^{(1)} \cup U_{\eps,h}^{(2)}
\end{equation*}
is a neighbourhood of $\partial B_{\eps^k}$ in $\Sigma_{\eps,h}$ which comes with canonical (singularly) conformal coordinates
$$
\Phi_{\eps,h} \colon V(4)  \to U_{\eps,h}(4),
$$
where we write  
$$
V(\alpha)=(B_\alpha \setminus B_1) \cup_{\partial B_1} (\mathbb{S}^1 \times [0,\alpha)).
$$
Moreover, we write
\begin{equation*}
f_{\eps,h}
= 
\begin{cases}
\eps^{-2k} & \ \text{in} \ B_4 \setminus B_1
\\
\eps^{-2} & \ \text{in} \ \mathbb{S}^1 \times [0,4)
\end{cases}
\end{equation*}
Note that the metric 
$$
l_{\eps,h}=f_{\eps,h}\Phi_{\eps,h}^* g_{\eps,h}
$$ 
is uniformly bounded from above and below almost everywhere by a fixed metric.
In fact, on $S^1 \times [0,4)$ the metric $l_{\eps,h}$ is the metric of a fixed flat cylinder, and on $B_4 \setminus B_1$ the metric $l_{\eps,h}$ is close to the standard flat metric on (a subset of) the unit disk.
Consider the function defined on $V(4)$ by
$$
w_{\eps,h} = u_{\eps,h} \circ \Phi_{\eps,h} - (u_{\eps,h} \circ \Phi_{\eps,h})_{V(4)} 
$$
where $(\cdot)_{V(4)}$ denotes the mean value of a function on $V(4)$ with respect to the metric $l_{\eps,h}$.
By the conformal invariance of the Dirichlet energy, we find that $w_{\eps,h}$ has gradient bounded in $L^2$ with respect to the rescaled metric,
\begin{equation} \label{dnm_1}
\int_{V(4)} |\nabla w_{\eps,h}|^2 dA_{l_{\eps,h}}\leq C.
\end{equation}
Since $l_{\eps,h}$ is uniformly controlled from above and below, there is a constant $C$ independent of $\eps,h$ 
such that
\begin{equation} \label{dnm_2}
\int_{V(4)} |w_{\eps,h}|^2 dA_{l_{\eps,h}} \leq C \int_{V(4)} |\nabla w_{\eps,h}|^2 dA_{l_{\eps,h}}.
\end{equation}
Next observe that $w_{\eps,h}$ is a weak solution to the equation 
\begin{equation} \label{conf_diff_eqn}
\Delta_{l_{\eps,h}} w_{\eps,h} = \frac{1}{f_{\eps,h}} \Delta_{g_{\eps,h}} u_{\eps,h} = \frac{1}{f_{\eps,h}} \lambda_{\eps,h} u_{\eps,h},
\end{equation}
thanks to the conformal covariance of the Laplacian in dimension two, which is easily checked to hold also in the singular context required for the above equation.
Finally, note that the right hand side of \eqref{conf_diff_eqn} is bounded in $L^2(V(4),dA_{l_{\eps,h}})$.
Thanks to this, \eqref{dnm_1}, \eqref{dnm_2}, and \eqref{conf_diff_eqn} we can apply the inhomogeneous De Giorgi--Nash--Moser estimates (see e.g.\ \cite[Theorem 8.17]{gilbarg_trudinger}) to obtain
\begin{equation*}
\sup_{p,q\in V(2)} |w_{\eps,h}(p)-w_{\eps,h}(q)| \leq C.
\end{equation*}
Since this is scale invariant, independent of $\eps,h$ and this implies
 \eqref{max_eigen_sup_bd}.
\end{proof}

We have a similar, but much less subtle bound in the case of Neumann eigenfunctions.

\begin{lemma} \label{neu_ell_est}
Let $u_\eps$ be a normalized $\mu_1(\Sigma \setminus B_{\eps^k}(x))$-eigenfunction.
If we use Euclidean polar coordinates $(r,\theta)$ centered at $x,$ we have the uniform pointwise bound
\begin{equation} \label{sup_bd}
	|u_{\eps}|(r,\theta) \leq C \log \left(\frac{1}{r} \right),
\end{equation}
for any $\eps^k \leq r \leq 1/2.$
\end{lemma}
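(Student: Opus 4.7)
The plan is to follow the same strategy as in the proof of \cref{max_eigen_neu_ell_est}, which is in fact considerably simpler in the present setting because we only have a genuine Neumann boundary at $\partial B_{\eps^k}(x)$ instead of the glued singular region. As before, using the conformally flat coordinates on $B_1(x)$ and the conformal covariance of the Laplacian in dimension two, we can replace the metric Laplacian by the Euclidean one, at the cost of introducing a bounded positive conformal factor $f$; the equation satisfied by $u_\eps$ becomes $\Delta_e u_\eps = f \mu_1(\Sigma \setminus B_{\eps^k}(x)) u_\eps$ on $B_1(x) \setminus B_{\eps^k}(x)$, with homogeneous Neumann condition on $\partial B_{\eps^k}(x)$. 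The relevant eigenvalue $\mu_1$ converges to $\lambda_1(\Sigma)$ as $\eps \to 0$ and is therefore uniformly bounded.

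First I would establish the gradient bound \eqref{max_eigen_lipschitz_bd} (with $|\nabla u_\eps|(r,\theta) \leq C/r$) for every $r \geq 2\eps^k$ by rescaling: the annulus $B_{3r}(x) \setminus B_{r/2}(x)$ lies in the interior of the domain, so the argument from the proof of \cref{max_eigen_neu_ell_est} applies verbatim. Setting $w_r(z) = u_\eps(rz)$ and differentiating the rescaled equation one has $\Delta_e \nabla w_r = r^2 \mu_1 \nabla(f_r w_r)$; the right hand side is bounded in $L^2(B_3 \setminus B_{1/2})$ by $Cr^2$ thanks to the scale invariance of the Dirichlet energy and the uniform $L^2$-bound on $u_\eps$. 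Standard interior elliptic estimates then yield $\sup_{1 \leq |z| \leq 2}|\nabla w_r| \leq C$, which rescales to \eqref{max_eigen_lipschitz_bd} for $r \geq 2\eps^k$.

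Next I would integrate this estimate radially to get the pointwise bound for the same range. On a fixed compact subset of $\Sigma$ away from $x$ standard interior elliptic estimates give uniform $C^\infty$-bounds on $u_\eps$, hence in particular a uniform pointwise bound on $\partial B_{1/2}(x)$. Integrating $|\nabla u_\eps| \leq C/r$ from $\partial B_{1/2}$ to $\partial B_r$ yields $|u_\eps|(r,\theta) \leq C + C\log(1/(2r)) \leq C\log(1/r)$ for every $2\eps^k \leq r \leq 1/2$.

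The only remaining range is $r \in [\eps^k, 2\eps^k]$, and this is where the Neumann boundary actually enters. I would argue by one more rescaling, this time of the annulus $B_{4\eps^k}(x) \setminus B_{\eps^k}(x)$ to unit scale via $\tilde w(z) = u_\eps(\eps^k z)$ on $B_4 \setminus B_1$. The rescaled equation is $\Delta_e \tilde w = \eps^{2k} f_{\eps^k}\mu_1 \tilde w$, with homogeneous Neumann condition on the inner circle $\partial B_1$; the zeroth order term is of order $\eps^{2k}$, so uniformly very small. Applying the Moser/De Giorgi--Nash--Moser estimate up to the Neumann boundary (or, equivalently, reflecting across $\partial B_1$ after straightening and then using the interior version) gives
\[
\sup_{B_2 \setminus B_1} |\tilde w| \leq C\bigl(\sup_{\partial B_{2}}|\tilde w| + \|\tilde w\|_{L^2(B_4 \setminus B_1)}\bigr).
\]
The first term on the right is controlled via the previous step by $C\log(1/\eps^k)$, and the second by the $L^2$-normalization. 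Rescaling back and noting that $\log(1/\eps^k) \leq Ck\log(1/r)$ uniformly for $r \in [\eps^k, 2\eps^k]$ finishes \eqref{sup_bd}. The only step that requires a little care is this last boundary estimate; everything else is a direct (and simpler) replay of the previous lemma's argument.
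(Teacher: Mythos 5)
Your treatment of the range $r\ge 2\eps^k$ — the rescaled gradient bound via interior elliptic estimates, then radial integration from $\partial B_{1/2}$ — is exactly the paper's argument. The gap is in the boundary annulus $r\in[\eps^k,2\eps^k]$, where the quantity you invoke is not controlled: after the rescaling $\tilde w(z)=u_\eps(\eps^k z)$ you have
\[
\|\tilde w\|_{L^2(B_4\setminus B_1)}^2 \;=\; \eps^{-2k}\int_{B_{4\eps^k}\setminus B_{\eps^k}}|u_\eps|^2\,dx,
\]
and the $L^2$-normalization only gives $\int_{B_{4\eps^k}\setminus B_{\eps^k}}|u_\eps|^2\le 1$, so $\|\tilde w\|_{L^2(B_4\setminus B_1)}$ may blow up like $\eps^{-k}$, which is far worse than the target $\log(1/\eps^k)$. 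The $L^2$-norm of the rescaled function is not a scale-invariant quantity and is not controlled; the step "the second by the $L^2$-normalization" is therefore wrong as written.

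The quantity that \emph{is} uniformly controlled is $\|\nabla\tilde w\|_{L^2(B_4\setminus B_1)}$, by the conformal/scale invariance of the Dirichlet energy (exactly as used in the previous lemma). The fix is to replace your sup bound by the oscillation estimate: subtract the mean, control $\|\tilde w-\overline{\tilde w}\|_{L^2}$ by $\|\nabla\tilde w\|_{L^2}$ via Poincar\'e, and apply De Giorgi--Nash--Moser (up to the Neumann boundary, or equivalently after even reflection) to get $\operatorname{osc}_{B_2\setminus B_1}\tilde w\le C$. Combined with the bound on $\partial B_2$ coming from the $r\ge 2\eps^k$ range, this yields \eqref{sup_bd}. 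Alternatively, and this is how the paper phrases it, one can run the \emph{gradient} bound argument (your first step) right up to $\partial B_{\eps^k}$ by using elliptic boundary regularity for the Neumann problem in place of interior estimates, and then simply integrate $|\nabla u_\eps|\le C/r$ over the full range $[\eps^k,1/2]$; this bypasses the boundary DNM step entirely.
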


\begin{proof}
For radii $r \geq 2 \eps^k$ this follows from the proof of \cref{max_eigen_neu_ell_est}.
For the remaining radii, we use the same argument but apply elliptic boundary estimates
\cite[Chapter 5.7]{taylor_1}.
\end{proof}

\section{The limit spectrum}\label{limit}
In  this section we discuss the spectrum of $\Sigma_{\eps,h}$ and the convergence of the eigenfunctions on $\Sigma_{\eps,h}$
as $\eps \to 0$. We mainly restrict our discussion to the surfaces 
$\Sigma_{\eps,h}=(\Sigma \setminus B_\eps)\cup_{\partial B_\eps} M_{\eps,h}.$
The discussion for glueing handles is similar or identical. 
We will indicate the necessary changes.

For fixed $h >0$ denote by 
$$0=\nu_{0,h} < \nu_{1,h} \leq \nu_{2,h} \leq \dots$$ 
the reordered union (counted with multiplicity) of the eigenvalues of $\Sigma$ and of those Dirichlet eigenvalues on $M_{\eps,h}$ that correspond to rotationally symmetric functions.
Note that the latter are precisely the limits of eigenvalues on $M_{\eps,h}$ has $\eps \to 0$.

Also, for $u \in W^{1,2}(\Sigma \setminus B_\eps),$ we write $\tilde u \in W^{1,2}(\Sigma)$
for the function which is given by $u$ in $\Sigma \setminus B_\eps$
and by the harmonic extension of $\left. u \right|_{\partial B_\eps}$
to $B_\eps.$

\begin{theorem} \label{conv}
For any $l \in \IN$ we have that
$$
\lim_{\eps \to 0} \lambda_l(\Sigma_{\eps,h}) = \nu_{l,h}
$$
uniformly in $h \in [h_0,h_1]$.
Moreover, for a sequence of normalized eigenfunctions $u_{\eps,h}$ on $\Sigma_{\eps,h}$ with uniformly bounded eigenvalue we have subsequential convergence
as follows.
\begin{enumerate}
\item
On $\Sigma$ we have that
$$
\widetilde{\left. u_{\eps}\right|_{\Sigma \setminus B_{\eps}}} \to \phi
$$ in
$L^2(\Sigma),$ 
where 
$\phi$ is an eigenfunction on $\Sigma$;
and
\item 
On $M_{\eps,h}$ we have that
$$
\int_{M_{\eps,h}} \left| u_{\eps,h}- \left(\int_{M_{\eps,h}} u_{\eps,h} \eps^{-1/2} \psi_h \right) \eps^{-1/2} \psi_h \right|^2 \leq C \eps \log(1/\eps),
$$
where
 $\eps^{-1/2} \psi_h$ is a normalized rotationally symmetric Dirichlet eigenfunction on $M_{\eps,h}$.
\end{enumerate}
\end{theorem}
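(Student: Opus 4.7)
The plan is to establish the eigenvalue convergence $\lambda_l(\Sigma_{\eps,h})\to\nu_{l,h}$ by combining a min-max upper bound with a compactness statement for $L^2$-normalized sequences of eigenfunctions; the latter simultaneously yields the matching lower bound and the two concentration assertions. The pointwise bounds of \cref{max_eigen_neu_ell_est} are the main tool throughout, since they compensate for the fact that the metric of $\Sigma_{\eps,h}$ is discontinuous across $\partial B_{\eps^k}$.

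For the upper bound I would construct, for the first $l+1$ elements of the limit spectrum, pairwise $L^2$-orthogonal trial functions. To each eigenvalue of $\Sigma$ associate a $\Sigma$-eigenfunction multiplied by a logarithmic cutoff that equals $1$ outside $B_{\eps^{k/2}}$ and vanishes on $\partial B_{\eps^k}$, then extend it by zero across $M_{\eps,h}$; the logarithmic profile ensures that the cutoff region contributes $o(1)$ to the Dirichlet energy. To each radial Dirichlet eigenvalue of $M_{\eps,h}$ associate the corresponding normalized eigenfunction on $M_{\eps,h}$, extended by zero. The two families have disjoint support, so mutual $L^2$-orthogonality is automatic and the Rayleigh quotients converge to the respective $\nu_{j,h}$ uniformly in $h\in[h_0,h_1]$; min-max then yields $\limsup_{\eps\to 0}\lambda_l(\Sigma_{\eps,h})\leq\nu_{l,h}$.

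For the compactness statement, given $L^2$-normalized $u_{\eps,h}$ with $\lambda_{\eps,h}\leq\Lambda$, \cref{max_eigen_neu_ell_est} bounds $|u_{\eps,h}|$ on $\partial B_{\eps^k}$ by $C\log(1/\eps)$. A logarithmic barrier gives the harmonic extension inside $B_{\eps^k}$ uniform $W^{1,2}$-norm, hence $\widetilde{u_{\eps,h}|_{\Sigma\setminus B_{\eps^k}}}$ is bounded in $W^{1,2}(\Sigma)$; Rellich produces a subsequential $L^2(\Sigma)$-limit $\phi$, which solves $\Delta\phi=\nu\phi$ weakly on $\Sigma\setminus\{x_0\}$ (by testing against functions supported away from $x_0$) and hence extends to a genuine eigenfunction on $\Sigma$ by removability of isolated singularities for elliptic PDE in dimension two. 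On the $M_{\eps,h}$-side I decompose $u_{\eps,h}=u_{\mathrm{rad}}+u_{\mathrm{nr}}$, where $u_{\mathrm{rad}}(t)$ is the fibrewise mean over $\Sph^1(\eps)$. Poincar\'e on circles of length $2\pi\eps$ combined with the uniform energy bound yields $\|u_{\mathrm{nr}}\|_{L^2(M_{\eps,h})}^2\leq C\eps^2$. The mean $u_{\mathrm{rad}}$ solves a 1D ODE on $[0,h]$ with boundary value bounded by $C\log(1/\eps)$ and the cross-cap symmetry $u_{\mathrm{rad}}(t)=u_{\mathrm{rad}}(h-t)$; comparison with the symmetric Dirichlet basis $\{\sin((2n+1)\pi t/h)\}$, taking into account the factor $2\pi\eps$ coming from integration over the fibre, gives closeness in $L^2(M_{\eps,h})$ of $u_{\mathrm{rad}}$ to an appropriate scalar multiple of $\eps^{-1/2}\psi_h$ at rate $O(\eps\log(1/\eps))$.

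The matching lower bound $\liminf\lambda_l(\Sigma_{\eps,h})\geq\nu_{l,h}$ then follows by contradiction: if it failed, an orthonormal collection of $l+1$ eigenfunctions with eigenvalues bounded by $\nu_{l,h}-\delta$ would, after extracting a common subsequence, converge to $l+1$ asymptotically orthonormal elements of the limit spectrum strictly below $\nu_{l,h}$, violating its definition. The asymptotic orthonormality uses that the $\Sigma$-side limits and $M_{\eps,h}$-side concentrations live in orthogonal subspaces by construction. The main obstacle will be the $M_{\eps,h}$-side analysis: the discontinuity of the metric at $\partial B_{\eps^k}$ rules out a direct Dirichlet spectral expansion on $M_{\eps,h}$, so one has to pass through the fibrewise decomposition and carefully track the 1D ODE near resonant values of $\lambda_{\eps,h}$ approaching a symmetric Dirichlet eigenvalue of the interval $[0,h]$ in order to extract the precise $O(\eps\log(1/\eps))$ rate.
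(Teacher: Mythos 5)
Your proposal is sound overall, but it takes a genuinely different route from the paper in two places, which is worth comparing. For the lower bound $\liminf_{\eps\to 0}\lambda_l(\Sigma_{\eps,h})\geq \nu_{l,h}$ you argue by contradiction from eigenfunction compactness: a subsequential limit of $l+1$ orthonormal eigenfunctions with eigenvalues $\leq \nu_{l,h}-\delta$ would give $l+1$ orthonormal eigenvectors of the limit operator $\Delta_\Sigma\oplus(-d^2/dt^2)^D_{\mathrm{sym}}$ with eigenvalues strictly below $\nu_{l,h}$. This works, but it requires the eigenfunction convergence as \emph{input}, and you must check explicitly that no $L^2$-mass is lost near $\partial B_{\eps^k}$ so that orthonormality passes to the limit; moreover, since the theorem asserts convergence \emph{uniformly in} $h\in[h_0,h_1]$, a further diagonal/subsequence argument in $h$ is needed. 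The paper instead gives a direct variational estimate (Step 2): it chooses a normalized combination $w$ of the first $l+1$ eigenfunctions, orthogonal to the first $m$ Neumann eigenfunctions of $\Sigma\setminus B_{\eps^k}$ and whose $M_{\eps,h}$-part minus the harmonic extension $t$ of its boundary trace is orthogonal to the first $n$ Dirichlet eigenfunctions ($m+n\leq l$), and shows $\int|\nabla w|^2\geq \min\{\mu_m,\lambda_n\}\int|w|^2 - O(\eps^{1/2}\log^{1/2}(1/\eps))$ via \eqref{eq_est_harm}; this is quantitative, logically independent of the eigenfunction convergence, and manifestly uniform in $h$. For the $M_{\eps,h}$-concentration, you split into fibrewise mean and fluctuation, kill the fluctuation by Poincar\'e on the collapsing circle, and solve the 1D ODE $-u_{\mathrm{rad}}''=\lambda u_{\mathrm{rad}}$ explicitly ($u_{\mathrm{rad}}(t)=A\cos(\sqrt{\lambda}(t-h/2))$ by the cross-cap symmetry) before matching against a symmetric Dirichlet sine. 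That is legitimate and more transparent for this flat model. The paper instead tests each Dirichlet eigenfunction $\psi_{\eps,h,l}$ against $u_{\eps,h}-v_{\eps,h}\in W^{1,2}_0(M_{\eps,h})$, with $v_{\eps,h}$ the harmonic extension of the boundary trace, to obtain identity \eqref{two_identity}, then controls every projection coefficient but one via the uniform separation of the Dirichlet spectrum and $\|v_{\eps,h}\|_{L^2(M_{\eps,h})}^2\lesssim\eps\log(1/\eps)$. The paper's route does not rely on exact solvability and is the version that transfers to the truncated hyperbolic cusp in \cite{MS3}.

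One quantitative point you should correct in the $M_{\eps,h}$ analysis: bounding $u_{\mathrm{rad}}(0)$ by $C\log(1/\eps)$ via the sup bound \eqref{max_eigen_sup_bd} only yields $\eps\log^2(1/\eps)$ for the final estimate, one logarithm short. To reach $\eps\log(1/\eps)$ you need the circle-average of $u_{\eps,h}$ over $\partial B_{\eps^k}$ to be $O(\log^{1/2}(1/\eps))$, which follows from the $L^2$-trace estimate \eqref{eq_int_bd_sobolev} rather than the pointwise bound; the nonconstant Fourier modes are negligible because they decay exponentially into $M_{\eps,h}$. (The same observation is really what underlies \eqref{eq_est_harm}, even though the paper's write-up of that step displays only the sup bound.)
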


Most of this material is contained in \cite{anne_2, post-diss,post}, where the case of handles of fixed height $h$ and $k=1$ is covered.
The key ingredient for the case $k>1$ is the pointwise bound from \cref{max_eigen_neu_ell_est}.
The quantitative estimate in the second item above seems to be new.
It is a crucial ingredient to obtain \cref{main_2}.

For the proof of \cref{conv} we need the following result for the
Neumann spectrum of $\Sigma \setminus B_{\eps^k}$, which can also be found in \cite{anne}.

\begin{lemma} \label{max_eigen_neumann_conv}
The spectrum of $\Sigma \setminus B_{\eps^k}$ with Neumann boundary conditions
converges to the spectrum of $\Sigma.$
Moreover, for any sequence $\eps_l \to 0$ and orthonormal eigenfunctions
$u_1^{\eps_l},\dots u_k^{\eps_l}$ on $\Sigma \setminus B_{\eps_l},$
with uniformly bounded eigenvalues, 
we have subsequential convergence $\tilde u_i^{\eps_l} \to u_i$ in $L^2(\Sigma),$
where $u_1,\dots,u_k$ are orthonormal eigenfunctions on $\Sigma.$
\end{lemma}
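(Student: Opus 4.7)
I would deduce both assertions simultaneously via a min-max comparison combined with Rellich compactness, exploiting the vanishing $2$-capacity of a point in a surface.

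For the upper bound $\limsup_\eps \mu_j(\Sigma\setminus B_{\eps^k}) \leq \lambda_j(\Sigma)$, I would insert the restrictions of an $L^2$-orthonormal system of $\Sigma$-eigenfunctions $\phi_0, \dots, \phi_j$ into the min-max for the Neumann problem. Smoothness of the $\phi_i$ at $x$ gives Gram matrix $I + O(\eps^{2k})$ and Dirichlet energies $\lambda_i + O(\eps^{2k})$, so a small Gram--Schmidt correction delivers the inequality.

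For the matching lower bound together with $L^2$-convergence, I would take $L^2$-orthonormal Neumann eigenfunctions $u_1^\eps, \dots, u_K^\eps$ with eigenvalues $\leq \Lambda$ and let $\tilde u_i^\eps \in W^{1,2}(\Sigma)$ be their harmonic extensions. The pointwise bound \cref{neu_ell_est} gives $\|\tilde u_i^\eps\|_{L^2(B_{\eps^k})}^2 \leq C\eps^{2k}\log^2(1/\eps) \to 0$, so the extensions are nearly $L^2$-orthonormal on $\Sigma$. Since the harmonic extension minimises Dirichlet energy among $W^{1,2}$-extensions, its energy is bounded by the scale-invariant $\dot{H}^{1/2}(\partial B_{\eps^k})$-seminorm of the trace; applying the trace theorem on the fixed rescaled annulus $B_2 \setminus B_1$ to $u_i^\eps$ minus its annular mean, together with Poincar\'e, yields the uniform bound $\|\nabla\tilde u_i^\eps\|_{L^2(B_{\eps^k})}^2 \leq C\Lambda$. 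The family $\{\tilde u_i^\eps\}$ is therefore bounded in $W^{1,2}(\Sigma)$, and Rellich extracts a subsequence with $\tilde u_i^\eps \to u_i$ in $L^2(\Sigma)$ and weakly in $W^{1,2}(\Sigma)$.

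Testing the weak Neumann eigenvalue equation against $\varphi \in C_c^\infty(\Sigma \setminus \{x\})$---for which $\supp\varphi \subset \Sigma \setminus B_{\eps^k}$ once $\eps$ is small---and passing to the limit yields $\int_\Sigma \nabla u_i \cdot \nabla \varphi = \mu_i \int_\Sigma u_i\varphi$, where $\mu_i = \lim\mu_i^\eps$ along the subsequence. Logarithmic cutoffs $\chi_\delta$ vanishing on $B_\delta(x)$ and equal to $1$ outside $B_{\sqrt{\delta}}(x)$ have $\|\nabla\chi_\delta\|_{L^2}^2 = O(1/\log(1/\delta))$, so any $\phi \in W^{1,2}(\Sigma)$ is approximated by $\chi_\delta \phi$, upgrading $u_i$ to a weak eigenfunction of $\Sigma$. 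Orthonormality of the $u_i$ follows from that of the $u_i^\eps$ together with the smallness of the extensions; combined with the upper bound and interlacing, this forces $\mu_j^\eps \to \lambda_j(\Sigma)$ with the correct multiplicities. The delicate point is the uniform $W^{1,2}$-bound on the extensions: the pointwise $\log$-bound on $u_i^\eps$ is too weak by itself, and it is the scale invariance of $\dot{H}^{1/2}$ in dimension two, together with the minimising property of the harmonic extension, that decouples the extension energy from the $L^\infty$-norm of the trace.
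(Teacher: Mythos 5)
Your argument is correct and follows essentially the same structure as the paper's proof: a min-max upper bound, uniform $W^{1,2}$-control of the harmonic extensions to pass through Rellich compactness, and a density argument exploiting the zero capacity of a point to upgrade the weak limit to a $\Sigma$-eigenfunction. The only real differences are cosmetic: for the upper bound you restrict the $\Sigma$-eigenfunctions directly (which is cleaner for a Neumann problem than the paper's log-cutoff from \eqref{eq_upper_bd}), and for the uniform $W^{1,2}$-bound on $\tilde u_i^{\eps}$ you reconstruct the scaling/trace-theorem argument on the fixed annulus $B_2\setminus B_1$, whereas the paper simply cites Rauch--Taylor \cite[p.~40]{rt}, which establishes exactly that bound by the same scaling idea. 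Your closing remark correctly isolates the non-obvious step—that the pointwise $\log$-bound from \cref{neu_ell_est} controls only the $L^2$-piece of the extension, while the gradient piece needs the scale-invariant $\dot H^{1/2}$/harmonic-extension mechanism—which is indeed the content of the cited estimate.
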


Since some steps in the proof are very similar to the argument for \cref{conv} we defer the proof for a moment.

\begin{proof}[Proof of \cref{conv}]

\textsc{Step 1:} 
\textit{Asymptotic upper bound}
\smallskip

Let $\eta_\eps$ be a log cut-off function,
$$
\eta_\eps =
\begin{cases}
1 & \text{in} \ \Sigma \setminus B_{\eps^{k/2}}(x_0)
\\
1-\frac{\log(|x|/\eps^{k/2})}{\log(\eps^{k/2})} & \text{in} \ B_{\eps^{k/2}}(x_0)
\\
0 & \ \text{else},
\end{cases}
$$
and
$\phi \colon \Sigma \to \IR$ be a normalized eigenfunction with eigenvalue $\lambda$, then
\begin{equation} \label{eq_upper_bd}
\begin{split}
\int_{\Sigma_{\eps,h}} |\nabla(\eta_{\eps} \phi)|^2
&\leq 
\int_{\Sigma_{\eps,h}} \eta_\eps^2 |\nabla \phi|^2 
+
2 \int_{\Sigma_{\eps,h}}  \nabla \eta_\eps \cdot \nabla \phi
+
\int_{\Sigma_{\eps,h}} \phi^2 |\nabla \eta_\eps|^2
\\
& \leq 
\int_{\supp \eta_\eps} |\nabla \phi|^2
+ 2 \sup_\Sigma |\nabla \phi| \left(\int_{\Sigma} |\nabla \eta_\eps|^2 \right)^{1/2}
+ \sup_\Sigma |\phi| \int_\Sigma |\nabla \eta_\eps|^2
\\
& \leq \lambda \int_\Sigma \phi^2 +
\frac{C}{\log^{1/2}(1/\eps)}
+
\frac{C}{\log(1/\eps)}
\\
& \leq
\lambda
+
\frac{C}{\log^{1/2}(1/\eps)}
\end{split}
\end{equation}
since $\phi$ and $|\nabla \phi|$ are bounded and by the explicit choice of $\eta_\eps$.
Similarly, for two orthogonal eigenfunctions $\phi_1,\phi_2$ we have that
$$
\left| \int_{\Sigma_{\eps,h}} (\eta_\eps \phi_i)(\eta_\eps \phi_j) - \delta_{ij} \right|
+ \left| \int_{\Sigma_{\eps,h}} \nabla(\eta_\eps \phi_i) \nabla(\eta_\eps \phi_j) - \delta_{ij} \right|
\leq C \eps^{2k} + \frac{C}{\log^{1/2}(1/\eps)}.
$$ 
Moreover, for any two Dirichlet eigenfunctions on $M_{\eps,h}$ their extension by $0$ to all of $\Sigma_{\eps,h}$ are clearly orthogonal in $L^2(\Sigma)$ and $W^{1,2}(\Sigma)$ and have disjoint support with all the functions $\eta_\eps \phi$ as above.
The asymptotic upper bound on the eigenvalues follows now immediately from the variational characterization of the eigenvalues.
\\

\textsc{Step 2:} 
\textit{Asymptotic lower bound}
\smallskip

For $u_{\eps,h}$ an eigenfunction on $\Sigma_{\eps,h}$ we denote by $v_{\eps,h}$ the harmonic extension of $\left. u_{\eps,h} \right|_{\partial M_{\eps,h}}$
to $M_{\eps,h}$.
If then $u_{\eps,h}$ is a normalized eigenfunction with uniformly bounded eigenvalue on $\Sigma_{\eps,h}$, it follows from the maximum principle and \cref{max_eigen_neu_ell_est}, that
\begin{equation*}
\sup_{M_{\eps,h}} |v_{\eps,h}|
\leq \sup_{\partial M_{\eps,h}} |v_{\eps,h}|
=
\sup_{\partial B(x,\eps^k)} |u_{\eps,h}|
 \leq 
C \log(1/\eps^k).
\end{equation*}
This implies
\begin{equation} \label{eq_est_harm}
\int_{M_{\eps,h}} |v_{\eps,h}|^2 \leq C \eps  |\log(\eps)|.
\end{equation}
uniformly in $h \in [h_0,h_1]$.

Let now $w_{\eps,h}$ be a normalized linear combination of the first $(l+1)$-eigenfunctions on $\Sigma_{\eps,h}$ and write $t_{\eps,h}$
for the harmonic extension of
$\left. w_{\eps,h} \right|_{\partial M_{\eps,h}}$ to $M_{\eps,h}$.
For dimensional reasons, we may choose $w_{\eps,h}$ 
orthogonal to the first $m$ Neumann eigenfunctions on $\Sigma \setminus B_{\eps^k}$ and such that
$w_{\eps,h} - t_{\eps,h}$ is orthogonal
to the first $n$ Dirichlet eigenfunctions on $M_{\eps,h}$ provided $m+n \leq l$.

First note that since $\left. w_{\eps,h} \right|_{M_{\eps,h}} - t_{\eps,h} \in W_0^{1,2}(M_{\eps,h})$,
we obtain from integration by parts that
\begin{equation*}
\int_{M_{\eps,h}} \nabla (w_{\eps,h}  - t_{\eps,h}) \cdot \nabla t_{\eps,h}=0.
\end{equation*}
This is turn implies that
\begin{equation*}
\int_{M_{\eps,h}} | \nabla (w_{\eps,h}  - t_{\eps,h}) |^2 = \int_{M_{\eps,h}} | \nabla w_{\eps,h}|^2 - \int_{M_{\eps,h}} |\nabla t_{\eps,h}|^2 
\leq  \int_{M_{\eps,h}} |\nabla w_{\eps,h}|^2.
\end{equation*}

We then find that
\begin{equation*}
\begin{split}
\int_{\Sigma_{\eps,h}} |\nabla w_{\eps,h}|^2
&\geq 
\int_{\Sigma \setminus B_{\eps^k}} |\nabla w_{\eps,h}|^2
+
\int_{M_{\eps,h}} |\nabla (w_{\eps,h}-t_{\eps,h})|^2
\\
& \geq
\mu_{m}(\Sigma \setminus B_{\eps^k}) \int_{\Sigma \setminus B_{\eps^k}} |w_{\eps,h}|^2
+
\lambda_{n}(M_{\eps,h}) \int_{M_{\eps,h}} |w_{\eps,h} - t_{\eps,h}|^2
\\
& \geq
\mu_{m}(\Sigma \setminus B_{\eps^k}) \int_{\Sigma \setminus B_{\eps^k}} |w_{\eps,h}|^2
+
\lambda_{n}(M_{\eps,h}) \int_{M_{\eps,h}} |w_{\eps,h} |^2
\\
& \ \ -
2 \lambda_n(M_{\eps,h}) \int_{M_{\eps,h}} w_{\eps,h} t_{\eps,h}
+
\lambda_n(M_{\eps,h}) \int_{M_{\eps,h}} |t_{\eps,h}|^2
\\
& \geq
\mu_{m}(\Sigma \setminus B_{\eps^k}) \int_{\Sigma \setminus B_{\eps^k}} |w_{\eps,h}|^2
+
\lambda_{n}(M_{\eps,h}) \int_{M_{\eps,h}} |w_{\eps,h} |^2
\\
& \ \ -
2 \lambda_n(M_{\eps,h}) \left( \int_{M_{\eps,h}} |w_{\eps,h}|^2 \right)^{1/2} \left( \int_{M_{\eps,h}} |t_{\eps,h}|^2 \right)^{1/2}
\\
& \geq
\mu_{m}(\Sigma \setminus B_{\eps^k}) \int_{\Sigma \setminus B_{\eps^k}} |w_{\eps,h}|^2
+
\lambda_{n}(M_{\eps,h}) \int_{M_{\eps,h}} |w_{\eps,h}|^2 - C \eps^{1/2} \log^{1/2}(1/\eps^k)
\\
& \geq
\min\{\mu_{m}(\Sigma \setminus B_{\eps^k}) , \lambda_{n}(M_{\eps,h})\} \int_{\Sigma_{\eps,h}} |w_{\eps,h}|^2 - C \eps^{1/2} \log^{1/2}(1/\eps^k),
\end{split}
\end{equation*}
where we have used \eqref{eq_est_harm} and our choice of $w_{\eps,h}$.
The asymptotic lower bound now follows easily by choosing $m$ and $n$ appropriately using \cref{max_eigen_neumann_conv}.
\\

\textsc{Step 3:} 
\textit{Convergence of eigenfunctions}
\smallskip

Let $u_{\eps,h}$ be a normalized eigenfunction with uniformly bounded eigenvalue $\lambda_{\eps,h}$.
Since the harmonic extension of $\left. u_{\eps,h} \right|_{\Sigma \setminus B_{\eps^k}}$ to $\Sigma$
is uniformly bounded in $W^{1,2}(\Sigma)$ thanks to \cite[p.\ 40]{rt}, we get from the compact Sobolev embedding
subsequential convergence 
$$\widetilde{\left. u_{\eps,h} \right|_{\Sigma \setminus B_{\eps^k}}} \to \phi
$$ weakly in $W^{1,2}(\Sigma)$ and strongly in $L^2(\Sigma)$.
Since $C_c^\infty (\Sigma \setminus B_{\eps^k}) \subset W^{1,2}(\Sigma)$ is dense the weak convergence easily 
implies that $\phi$ either vanishes identically or is a non-trivial eigenfunction with eigenvalue $\lim_{\eps \to 0} \lambda_{\eps,h}$.
From the pointwise bound, the maximum principle and strong convergence in $L^2(\Sigma)$ we find that
$\|u_{\eps,h}\|_{L^2(\Sigma \setminus B_{\eps^k})} \to \|\phi\|_{L^2(\Sigma)}$.

If $\psi_{\eps,h,l}$ is a normalized $\lambda_l(M_{\eps,h})$-Dirichlet eigenfunction on $M_{\eps,h}$, we can test the corresponding eigenvalue equation against
$u_{\eps,h} - v_{\eps,h} \in W_0^{1,2}(M_{\eps,h})$
and find that
\begin{equation*}
\begin{split}
\lambda_l(M_{\eps,h,l}) \int_{M_{\eps,h}} \psi_{\eps,h,l} (u_{\eps,h} - v_{\eps,h})
&=
\int_{M_{\eps,h}} \nabla \psi_{\eps,h,l} \nabla(u_{\eps,h} - v_{\eps,h})
\\
&=
\lambda_{\eps,h} \int_{M_{\eps,h}} \psi_{\eps,h,l} u_{\eps,h}.
\end{split}
\end{equation*}
This implies
\begin{equation} \label{two_identity}
(\lambda_l(M_{\eps,h}) - \lambda_{\eps,h}) \int_{M_{\eps,h}} u_{\eps,h} \psi_{\eps,h,l} 
= 
\lambda_l(M_{\eps,h}) \int_{M_{\eps,h}} v_{\eps,h} \psi_{\eps,h,l}.
\end{equation}
Note that the Dirichlet spectrum of $M_{\eps,h}$ is simple and uniformly separated below any $\Lambda>0$ for $h \in [h_0,h_1]$ provided $\eps$ is sufficiently small (depending on $h_0,h_1,\Lambda$)
Therefore, the computation above implies thanks to \eqref{eq_est_harm} and H{\"o}lder's inequality, that, up to taking a subsequence, there can be at most one $l_*$ such that the integral on the left hand side of \eqref{two_identity} does not limit to zero.
By taking the square in \eqref{two_identity} and using again the uniform separation of the spectrum, we find for $l \neq l_*$ that
\begin{equation*}
\begin{split}
\left( \int_{M_{\eps,h}} u_{\eps,h} \psi_{\eps,h,l} \right)^2 
&\leq  \left( \frac{\lambda_l(M_{\eps,h})}{\lambda_l(M_\eps,h)-\lambda_{\eps,h}}\right)^2 \left( \int_{M_{\eps,h}} v_{\eps,h} \psi_{\eps,h,l} \right)^2
\\
& \leq
\left( 1+\frac{\Lambda}{c} \right)^2 \left( \int_{M_{\eps,h}} v_{\eps,h} \psi_{\eps,h,l} \right)^2.
\end{split}
\end{equation*}
Since the Dirichlet eigenfunctions form an orthonormal basis of $L^2(M_{\eps,h})$ this implies thanks to the pointwise bound \eqref{eq_est_harm} that
\begin{equation*}
\begin{split}
\int_{M_{\eps,h}}\left| u_{\eps,h} - \left( \int_{M_{\eps,h}} u_{\eps,h} \psi_{\eps,h,l_*} \right) \psi_{\eps,h,l_*}\right|^2
&=
\sum_{l \neq l_*} \left( \int_{M_{\eps,h}} u_{\eps,h} \psi_{\eps,h,l} \right)^2
\\
& \leq
\sum_{l \neq l_*} \left( 1+\frac{\Lambda}{c} \right)^2 \left( \int_{M_{\eps,h}} v_{\eps,h} \psi_{\eps,h,l} \right)^2
\\
&\leq  
\left( 1+\frac{\Lambda}{c} \right)^2 \int_{M_{\eps,h}}|v_{\eps,h}|^2 
\\
& \leq 
C \left( 1+\frac{\Lambda}{c} \right)^2 \eps \log(1/\eps^k)
\end{split}
\end{equation*}
uniformly in $h \in [h_0,h_1]$.
\end{proof}

We still need to provide the proof of \cref{max_eigen_neumann_conv}.

\begin{proof}[Proof of \cref{max_eigen_neumann_conv}]
The asymptotic upper bound on the eigenvalues follows from the same cut-off argument used in the first step above (cf.\ \eqref{eq_upper_bd}).
The functions $\tilde u_{\eps} \in W^{1,2}(\Sigma)$ are uniformly bounded in $W^{1,2}(\Sigma)$ by \cite[p.\ 40]{rt}.
Therefore, using that $C_c^\infty(\Sigma \setminus \{x_0\}) \subset W^{1,2}(\Sigma)$ is dense,
the asymptotic lower bound is a straightforward consequence of a standard compactness argument combined with the compact
Sobolev embedding on $\Sigma$ as in the third step above.
The assertion concerning the convergence of the eigenfunctions follows from the arguments above, combined with the maximum principle, and \cref{neu_ell_est}.
\end{proof}

\section[Quasimodes]{Construction of quasimodes}\label{sec4}

In this section we first briefly discuss the spectrum and the eigenfunctions of 
the cross cap attached to $\Sigma$ for the construction of $\Sigma_{\eps,h}$.
Afterwards, 
we construct various different types of quasimodes, i.e.\ approximate eigenfunctions.
These can be used to approximately locate eigenvalues and functions.
Denote by $(u_{\eps,h,l})_{l \in \IN}$ an orthonormal basis of eigenfunctions on $\Sigma_{\eps,h}$.

\begin{lemma}[cf.\ {\cite[Proposition $1$]{anne_3}}] \label{lem_anne_quasimod}
For any $\Lambda>0$, 
there is a uniform constant $C>0$ with the following property.
Let $f \in W^{1,2}(\Sigma_{\eps,h})$ be a function with $1/2 \leq \|f\|_{L^2(\Sigma_{\eps,h})}\leq 2$ such that
$$
\left| 
\int_{\Sigma_{\eps,h}} \nabla f \nabla \varphi - \lambda \int_{\Sigma_{\eps,h}}  f \varphi \,
\right|
\leq \delta \|\varphi\|_{W^{1,2}(\Sigma_{\eps,h})}
$$
for some $\delta>0$ and any $\varphi \in W^{1,2}(\Sigma_{\eps,h})$, where $\lambda \leq \Lambda$.
Let $0<s<1$ and write 
$$
g=\sum_{\{ l \ \colon \ |\lambda_l(\Sigma_{\eps,h})-\lambda| > s \}} \langle f , u_{\eps,h,l} \rangle_{L^2(\Sigma_{\eps,h})} u_{\eps,h,l}.
$$
Then
\begin{equation} \label{eq_quasimod_est}
\int_{\Sigma_{\eps,h}} |g|^2 + \int_{\Sigma_{\eps,h}} |\nabla g|^2 \leq C \frac{\delta^2}{s^2}.
\end{equation}
\end{lemma}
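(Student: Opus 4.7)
The plan is to diagonalize the weak eigenvalue inequality using the orthonormal basis of eigenfunctions $(u_{\eps,h,l})_{l\in\IN}$ and reduce the claim to a single scalar inequality for the Fourier coefficients of $f$. First I would expand $f = \sum_l a_l u_{\eps,h,l}$ with $a_l = \langle f, u_{\eps,h,l}\rangle_{L^2(\Sigma_{\eps,h})}$, set $\lambda_l := \lambda_l(\Sigma_{\eps,h})$ and $p_l := |\lambda_l - \lambda|$, and let $I := \{l : p_l > s\}$, so that $g = \sum_{l \in I} a_l u_{\eps,h,l}$. Since the $u_{\eps,h,l}$ are simultaneously orthogonal in $L^2$ and in $W^{1,2}$ with $\|u_{\eps,h,l}\|_{W^{1,2}}^2 = 1 + \lambda_l$, testing against $\varphi = \sum_l c_l u_{\eps,h,l} \in W^{1,2}(\Sigma_{\eps,h})$ rewrites the hypothesis as
\[
\Big| \sum_{l} (\lambda_l - \lambda)\, a_l c_l \Big| \le \delta \Big(\sum_{l} (1+\lambda_l)\, c_l^2\Big)^{1/2}.
\]

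The main step is the choice $c_l := \sign(\lambda_l - \lambda)\, a_l$ for $l \in I$ and $c_l := 0$ otherwise; note that $\varphi \in W^{1,2}$ since $\sum(1+\lambda_l) c_l^2 \le \sum(1+\lambda_l) a_l^2 = \|f\|_{W^{1,2}}^2 < \infty$. With this choice the left hand side becomes $X := \sum_{l \in I} p_l a_l^2$, and on the right I would use $1 + \lambda_l \le (1+\lambda) + p_l$ together with $a_l^2 \le p_l a_l^2 / s$ (valid on $I$) to estimate $\sum_{l \in I} (1 + \lambda_l)\, a_l^2 \le \tfrac{1+\lambda+s}{s}\, X$. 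The inequality then reduces to $X^2 \le \delta^2 \tfrac{1+\lambda+s}{s}\, X$, hence $X \le C \delta^2 / s$ with $C$ depending only on $\Lambda$.

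From here the conclusion is routine: $\|g\|_{L^2}^2 = \sum_{l \in I} a_l^2 \le X/s \le C\delta^2/s^2$, and $\|\nabla g\|_{L^2}^2 = \sum_{l \in I} \lambda_l a_l^2 \le \lambda \|g\|_{L^2}^2 + X \le C\delta^2/s^2$, using $\lambda \le \Lambda$ and $s<1$. The only genuine subtlety is the choice of test function: the naive $\varphi = g$ yields an inequality involving signed terms $(\lambda_l - \lambda) a_l^2$ which could cancel, so inserting the sign factor on $I$ is essential to obtain a bound that is coercive in $a_l^2$. The hypothesis $1/2 \le \|f\|_{L^2} \le 2$ plays no role in the estimate itself; it is there only to ensure that $g$ is comparable to the actual spectral tail of $f$.
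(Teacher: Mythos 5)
Your proof is correct and rests on the same core idea as the paper's: test the weak inequality against a function built from the spectral projection of $f$ onto the complement of $[\lambda-s,\lambda+s]$ and exploit that $|\lambda_l-\lambda|>s$ there. The only packaging difference is that the paper splits $g=g_1+g_2$ into the pieces with $\lambda_l<\lambda-s$ and $\lambda_l>\lambda+s$, tests against each $g_i$ separately (where $\lambda_l-\lambda$ has a definite sign, so no cancellation), and first establishes $\|g_i\|_{W^{1,2}}\le(\lambda+2)\|g_i\|_{L^2}$ before closing, whereas you combine both pieces into a single test function $\sum_{l\in I}\sign(\lambda_l-\lambda)a_l u_{\eps,h,l}$ and use the pointwise inequality $1+\lambda_l\le 1+\lambda+p_l$ to control its $W^{1,2}$ norm by $X$; both routes are valid and of comparable length.
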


For sake of completeness, we have included a proof in \cref{sec_anne}.

\begin{rem} \label{rem_anne_quasimod}
Note that \cref{lem_anne_quasimod} can in particular be used to locate eigenvalues as follows.
If the right hand side in the \eqref{eq_quasimod_est} is strictly smaller than $\|f\|_{L^2(\Sigma_{\eps,h})}$, there is at least one eigenvalue contained in 
$[\lambda -s ,\lambda+s]$.
\end{rem}

Starting from eigenfunctions of $\Sigma$, we can construct quasimodes having most of their $L^2$-norm concentrated on $\Sigma$.
On the other hand, extending the Dirichlet eigenfunction of $M_{\eps,h}$ respectively $C_{\eps,h}$ carefully onto $\Sigma$, we obtain
quasimodes with most of their $L^2$-norm concentrated on $M_{\eps,h}$ or $C_{\eps,h}$, respectively.

\subsection{Quasimodes concentrated on $\Sigma$ } \label{sec_quasimodes_1}

We now construct two types of quasimodes resembling $\lambda_1(\Sigma)$-eigenfunctions.
The second construction works only under the symmetry assumption from the second part of \cref{main_1}.

\subsubsection{The case of cross caps} 

We start with the construction of the  quasimodes concentrated on $\Sigma$, which we obtain by simply cutting off an $L^2$-normalized $\lambda_1(\Sigma)$-eigenfunction near the points at which we attach and extending to all of $\Sigma_{\eps,h}$ by zero.

\smallskip

Let $\eta \colon [1,2] \to [0,1]$ be a function with 
$\eta(1)=0$
and
$\eta(2)=1$.
We then define the cut-off function $\eta_\eps \colon \Sigma_{\eps,h} \to [0,1]$ by
\begin{equation} \label{eq_def_cut_off}
\eta_{\eps}=
\begin{cases}
1 & \ \text{in} \  \Sigma \setminus B_{2 \eps^k}(x_0)
\\
\eta(\eps^{-k} r) & \ \text{in} \  B_{2 \eps^k} (x_0)
\\
0 & \ \text{on} \ M_{\eps,h},
\end{cases}
\end{equation}
where we use (Euclidean) radial coordinates $(\theta, r)$ in $B_{2 \eps^k}$.
Analogously, one can construct a cut-off function if $\Sigma_{\eps,h}$ is obtained by attaching $C_{\eps,h}$ which cuts-off near $x_0$ and $x_1$ using $\eta$.
By abuse of notation, we denote this function by $\eta_\eps$ as well.

\smallskip

For given $L^2$-normalized $\lambda_1(\Sigma)$-eigenfunction $\phi$, we define a new function by
\begin{equation} \label{eq_quasimod_surf}
\phi_\eps=
\begin{cases}
\eta_\eps \phi + (1-\eta_\eps) \phi(x_0) & \ \text{in} \ B_{2\eps^k}(x_0) \setminus B_{\eps^k}(x_0)
\\
\phi(x_0) & \ \text{on} \ M_{\eps,h}.
\end{cases}
\end{equation}
We will see below that if $\phi(x_0)=0$, the function $\phi_\eps$ turns out to be a good quasimode.
However, before we can actually prove this, we need to recall the following observation.

\begin{lemma} \label{mono_unif_sobolev}
Let $1<p<\infty$, then there is $C_p$ independent of $\eps$ and $k$, such that
\begin{equation*}
\| \varphi \|_{L^p(\Sigma \setminus B_{\eps^k})}
\leq C_p \|\varphi\|_{W^{1,2}(\Sigma \setminus B_{\eps^k})}
\end{equation*}
for any $\varphi \in W^{1,2}(\Sigma_{\eps,h})$.
\end{lemma}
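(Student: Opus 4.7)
The plan is to reduce the statement to the standard Sobolev embedding on the closed surface $(\Sigma,g)$, which holds for any $1<p<\infty$ with a constant depending only on the fixed background geometry, by constructing a linear extension operator $W^{1,2}(\Sigma \setminus B_{\eps^k}) \to W^{1,2}(\Sigma)$ whose operator norm is bounded independently of $\eps$ and $k$.

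To build such an extension, I would simply harmonically extend $\varphi|_{\Sigma \setminus B_{\eps^k}}$ across the missing disk $B_{\eps^k}(x_0)$; call the resulting function $\tilde \varphi \in W^{1,2}(\Sigma)$. The claim to invoke here is exactly the one the authors already use in Step 3 of the proof of \cref{conv}: by \cite[p.~40]{rt}, one has
\[
\|\tilde \varphi\|_{W^{1,2}(\Sigma)} \leq C\, \|\varphi\|_{W^{1,2}(\Sigma \setminus B_{\eps^k})},
\]
with $C$ independent of $\eps$ and $k$ for $\eps$ small. The mechanism behind this uniformity is that the $2$-capacity of a Euclidean ball of radius $\eps^k$ in a fixed surface behaves like $1/\log(1/\eps^k)$ and in particular tends to $0$, so the harmonic extension inherits a controlled $W^{1,2}$-norm from its Dirichlet data on $\partial B_{\eps^k}$ regardless of how small $\eps^k$ is. The same mechanism is what makes the dependence on $k$ harmless: a larger $k$ only shrinks $B_{\eps^k}$, which can only improve the capacity bound.

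Once $\tilde \varphi$ is available, the standard Sobolev embedding $W^{1,2}(\Sigma) \hookrightarrow L^p(\Sigma)$, valid on the closed surface $(\Sigma,g)$ for any $1 < p <\infty$ with a constant $C_p$ depending only on $(\Sigma,g)$, yields
\[
\|\varphi\|_{L^p(\Sigma \setminus B_{\eps^k})}
\leq \|\tilde \varphi\|_{L^p(\Sigma)}
\leq C_p \|\tilde \varphi\|_{W^{1,2}(\Sigma)}
\leq C C_p \|\varphi\|_{W^{1,2}(\Sigma \setminus B_{\eps^k})},
\]
which is the asserted inequality after renaming constants.

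The only non-routine point is the uniform boundedness of the extension operator, i.e.\ the uniformity of $C$ in $\eps$ and $k$; once one grants this (exactly as the authors grant it elsewhere via \cite{rt}), the rest is a one-line application of the Sobolev embedding on $\Sigma$, and the estimate does not require any information about the collapsing cross cap or cylinder attached to $\Sigma$.
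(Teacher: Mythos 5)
Your proof is correct and is essentially the same as the paper's: both invoke the uniform boundedness of the harmonic extension operator from \cite{rt} and then apply the Sobolev embedding $W^{1,2}(\Sigma)\hookrightarrow L^p(\Sigma)$. The only (inessential) discrepancy is in the motivational aside: the paper attributes the uniformity of the extension constant to a scaling argument (conformal invariance of the Dirichlet energy makes the annulus estimate scale-free), rather than to the vanishing of the $2$-capacity of small disks, which is a related but distinct fact.
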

\begin{proof}
This follows since the harmonic extension operator $W^{1,2}(\Sigma \setminus B_{\eps^k}) \to W^{1,2}(\Sigma)$
is uniformly bounded.
See e.g.\ \cite{rt}, where this is proved by a scaling argument.
The conclusion then follows by combining this with the Sobolev embedding $W^{1,2}(\Sigma) \hookrightarrow L^p(\Sigma)$.
\end{proof}

\begin{lemma} \label{mono_quasimode_1}
Let $\phi$ be an $L^2$-normalized $\lambda_1(\Sigma)$-eigenfunction.
We have for
the function $\phi_\eps$ defined above and any $\varphi \in W^{1,2}(\Sigma_{\eps,h})$, that
\begin{equation*}
\left| \int_{\Sigma_{\eps,h}} \nabla \phi_\eps \cdot \nabla \varphi - \lambda_1(\Sigma) \int_{\Sigma_{\eps,h}} \phi_\eps \varphi 
+ \lambda_1(\Sigma) \phi(x_0) \int_{M_{\eps,h}} \varphi
\, \right|
\leq
C \eps^{k/2} \| \varphi\|_{W^{1,2}(\Sigma_{\eps,h})}.
\end{equation*}
\end{lemma}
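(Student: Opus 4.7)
The plan is to decompose $\Sigma_{\eps,h}$ according to the three regions on which $\phi_\eps$ has different structure: on $\Sigma \setminus B_{2\eps^k}(x_0)$ one has $\phi_\eps=\phi$, on the annulus $A_\eps:=B_{2\eps^k}(x_0)\setminus B_{\eps^k}(x_0)$ the function interpolates, and on $M_{\eps,h}$ one has $\phi_\eps\equiv\phi(x_0)$. The first observation is that on $M_{\eps,h}$ we have $\nabla\phi_\eps\equiv 0$ and $\phi_\eps\varphi=\phi(x_0)\varphi$, so the contribution of $M_{\eps,h}$ to $\int_{\Sigma_{\eps,h}}\nabla\phi_\eps\cdot\nabla\varphi - \lambda_1(\Sigma)\int_{\Sigma_{\eps,h}}\phi_\eps\varphi$ is exactly $-\lambda_1(\Sigma)\phi(x_0)\int_{M_{\eps,h}}\varphi$, which is precisely cancelled by the third term added in the lemma. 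It therefore suffices to bound the resulting integral over $\Sigma\setminus B_{\eps^k}$.

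Next I would exploit the fact that $\phi$ is an actual eigenfunction on the closed surface $\Sigma$. Let $\tilde\varphi\in W^{1,2}(\Sigma)$ denote the harmonic extension of $\varphi|_{\Sigma\setminus B_{\eps^k}}$ to all of $\Sigma$. Testing $\Delta\phi=\lambda_1(\Sigma)\phi$ against $\tilde\varphi$ globally on $\Sigma$ yields
\[
\int_{\Sigma\setminus B_{\eps^k}}\nabla\phi\cdot\nabla\varphi
= \lambda_1(\Sigma)\int_{\Sigma\setminus B_{\eps^k}}\phi\varphi
+ \lambda_1(\Sigma)\int_{B_{\eps^k}}\phi\tilde\varphi
- \int_{B_{\eps^k}}\nabla\phi\cdot\nabla\tilde\varphi,
\]
and since $\phi_\eps-\phi$ is supported in $A_\eps$ on $\Sigma\setminus B_{\eps^k}$, the quantity inside the absolute value reduces to
\[
\lambda_1(\Sigma)\int_{A_\eps}(\phi-\phi_\eps)\varphi
+ \int_{A_\eps}\nabla(\phi_\eps-\phi)\cdot\nabla\varphi
+ \lambda_1(\Sigma)\int_{B_{\eps^k}}\phi\tilde\varphi
- \int_{B_{\eps^k}}\nabla\phi\cdot\nabla\tilde\varphi.
\]
It then remains to bound each of the four terms by $C\eps^k\|\varphi\|_{W^{1,2}(\Sigma_{\eps,h})}$, which is much stronger than the stated $O(\eps^{k/2})$.

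The two $B_{\eps^k}$-terms are handled by $|\phi|,|\nabla\phi|\leq C$, Cauchy--Schwarz with $|B_{\eps^k}|^{1/2}\sim\eps^k$, and the uniform boundedness of the harmonic-extension operator $W^{1,2}(\Sigma\setminus B_{\eps^k})\to W^{1,2}(\Sigma)$ taken from \cite[p.\ 40]{rt} (as already used in the proof of \cref{conv}). The two $A_\eps$-terms are where the precise form of $\phi_\eps$ matters, and this is the only delicate step. Since $\phi$ is smooth on $\Sigma$ and $A_\eps \subset B_{2\eps^k}(x_0)$, a first-order Taylor expansion gives $|\phi-\phi(x_0)|\leq C\eps^k$ on $A_\eps$, hence $|\phi_\eps-\phi|=|(1-\eta_\eps)(\phi(x_0)-\phi)|\leq C\eps^k$. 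The product rule yields $\nabla(\phi_\eps-\phi)= -\nabla\eta_\eps\,(\phi(x_0)-\phi) - (1-\eta_\eps)\nabla\phi$, and the potentially large factor $|\nabla\eta_\eps|\lesssim\eps^{-k}$ is killed by the $O(\eps^k)$ smallness of $\phi(x_0)-\phi$, giving $|\nabla(\phi_\eps-\phi)|\leq C$. Combined with $|A_\eps|\lesssim\eps^{2k}$ and Cauchy--Schwarz, both $A_\eps$-integrals are $O(\eps^k)\|\varphi\|_{W^{1,2}}$.

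The main (and only) conceptual point is this cancellation in the derivative of the cut-off: interpolating between $\phi$ and the constant $\phi(x_0)$, rather than between $\phi$ and $0$, is exactly what saves a factor of $\eps^k$ in $|\nabla(\phi_\eps-\phi)|$, and this is the whole reason for choosing the quasimode as in \eqref{eq_quasimod_surf}. Everything else is Cauchy--Schwarz bookkeeping on regions of area $\lesssim\eps^{2k}$ together with one global integration by parts via the harmonic extension.
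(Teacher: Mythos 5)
Your proof is correct and takes a genuinely different route from the paper's. The paper writes $\nabla\phi_\eps = \eta_\eps\nabla\phi + (\phi-\phi(x_0))\nabla\eta_\eps$ and handles the first term via the Leibniz rewrite $\int\eta_\eps\nabla\phi\cdot\nabla\varphi = \int\nabla\phi\cdot\nabla(\eta_\eps\varphi) - \int\varphi\,\nabla\phi\cdot\nabla\eta_\eps$, exploiting that $\eta_\eps\varphi$ extends by zero to $W^{1,2}(\Sigma)$. The price is the error term $\int_{A_\eps}\varphi\,\nabla\phi\cdot\nabla\eta_\eps$, which contains the test function itself rather than $\nabla\varphi$, and can only be bounded by combining H\"older with the uniform Sobolev embedding of \cref{mono_unif_sobolev}; this is what caps the paper's bound at $\eps^{k/2}$. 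You instead replace the cut-off extension $\eta_\eps\varphi$ by the harmonic extension $\tilde\varphi$ (exactly the operator from \cite[p.\ 40]{rt} that the paper already uses for \cref{mono_unif_sobolev} and \cref{conv}), test the eigenvalue equation for $\phi$ against $\tilde\varphi$ on all of $\Sigma$, and the troublesome $\varphi\nabla\eta_\eps$ term never appears. The cancellation $|\nabla\eta_\eps|\,|\phi-\phi(x_0)|\lesssim 1$ on the annulus -- which you correctly identify as the conceptual reason for choosing the quasimode to interpolate to $\phi(x_0)$ rather than to $0$ -- is the same cancellation the paper uses for its term \eqref{mono_term_2}; you just also apply it to the $\nabla(\phi_\eps-\phi)\cdot\nabla\varphi$ piece. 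The net effect is that every error term you have lives on $B_{2\eps^k}$ and can be hit with Cauchy--Schwarz and $|B_{2\eps^k}|^{1/2}\sim\eps^k$, so you get $O(\eps^k)\|\varphi\|_{W^{1,2}}$, strictly stronger than the paper's $O(\eps^{k/2})$. This buys nothing in the downstream applications (since $k>4$, the weaker $\eps^{k/2}$ already beats all the $\eps$- and $\eps\log(1/\eps)$-scale terms they need), but it is a cleaner and sharper argument; the only reason I can see for the paper's more awkward decomposition is that the cut-off technique parallelizes more directly to the proof of \cref{mono_quasimode_2}.
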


\begin{proof}
We compute
\begin{equation} \label{mono_part_int}
\begin{split}
\int_{\Sigma_{\eps,h}} \nabla \phi_\eps \cdot \nabla \varphi
=&
\int_{\Sigma_{\eps,h}} \nabla \phi \cdot \nabla (\eta_\eps \varphi) 
-
\int_{\Sigma_{\eps,h}} \varphi \nabla \phi \cdot \nabla \eta_\eps
+
\int_{\Sigma_{\eps,h}} (\phi-\phi(x_0)) \nabla \eta_\eps \cdot \nabla \varphi
\\
=&
\lambda_1(\Sigma )\int_{\Sigma_{\eps,h}} \phi \eta_\eps \varphi
-
\int_{\Sigma_{\eps,h}} \varphi \nabla \phi \cdot \nabla \eta_\eps
+
\int_{\Sigma_{\eps,h}} (\phi-\phi(x_0)) \nabla \eta_\eps \cdot \nabla \varphi
\\
=&
\lambda_1(\Sigma )\int_{\Sigma_{\eps,h}}\phi_\eps \varphi
-
\lambda_1(\Sigma) \phi(x_0) \int_{M_{\eps,h}} \varphi
-
\lambda_1(\Sigma) \phi(x_0) \int_{\Sigma \setminus B_{\eps^k}} (1-\eta_\eps) \varphi
\\
&-
\int_{\Sigma_{\eps,h}} \varphi \nabla \phi \cdot \nabla \eta_\eps
+
\int_{\Sigma_{\eps,h}} (\phi-\phi(x_0)) \nabla \eta_\eps \cdot \nabla \varphi,
\end{split}
\end{equation}
since $\eta_\eps \phi \in W^{1,2}(\Sigma)$.
Let us estimate the three last terms separately.
The first of these is small by H{\"o}lder's inequality,
\begin{equation}
\begin{split}
\left| \lambda_1(\Sigma) \phi_0(x_0) \int_{\Sigma \setminus B_{\eps^k}} (1-\eta_\eps) \varphi \right|
&\leq
C  \area(B_{2\eps^k})^{1/2} \left( \int_{\Sigma \setminus B_{\eps^k}} |\varphi|^2 \right)^{1/2}
\\
&\leq
C \eps^k \|\varphi\|_{L^2}(\Sigma_{\eps,h}).
\end{split}
\end{equation}
For the second term, we proceed as follows:
Since $\phi$ is smooth, there is a constant $C$ such that $|\nabla \phi| \leq C$.
Therefore, we can invoke  H{\"o}lder's inequality, the scaling invariance of the Dirichlet energy, and \cref{mono_unif_sobolev} to find
\begin{equation} \label{mono_term_1}
\begin{split}
\left| \int_{\Sigma_{\eps,h}} \varphi \nabla \phi \cdot \nabla \eta_\eps \, \right|
& \leq 
C \left( \int_{\Sigma_{\eps,h}}  |\varphi|^p \right)^{1/p} 
\area(B_{2\eps^k})^{1/q}
\left( \int_{\Sigma_{\eps,h}}  |\nabla \eta_\eps|^2 \right)^{1/2}
\\
& \leq C_p \eps^{2k/q} \| \varphi \|_{W^{1,2}(\Sigma_{\eps,h})},
\end{split}
\end{equation}
since it suffices to integrate over $\supp \nabla \eta_\eps \subset B_{2\eps^k}\setminus B_{\eps^k}$ and $1/p+1/q=1/2$.

\smallskip

We now estimate the last term from \eqref{mono_part_int}.
Since $\phi$ is smooth, there is a constant $C$, such that
\begin{equation*}
|\phi-\phi(x_0)| \leq C \eps^k
\end{equation*}
in $B_{2\eps^k}$.
Since $\supp \nabla \eta_\eps \subset B_{2\eps^k}$, this implies
\begin{equation} \label{mono_term_2}
\begin{split}
\left| \int_{\Sigma_{\eps,h}} (\phi-\phi(x_0)) \nabla \eta_\eps \cdot \nabla \varphi \, \right|
&\leq 
C \eps^k \left(\int_{\Sigma_{\eps,h}} |\nabla \eta_\eps|^2 \right)^{1/2}
\left(\int_{\Sigma_{\eps,h}} |\nabla \varphi|^2 \right)^{1/2}
\\
& \leq C \eps^k \|\varphi\|_{W^{1,2}(\Sigma_{\eps,h})}
\end{split}
\end{equation}
by H{\"o}lder's inequality and the scaling invariance of the Dirichlet energy.
If we specify to $p=q=4$ in \eqref{mono_term_1} and combine this with \eqref{mono_part_int} and \eqref{mono_term_2}, the assertion follows.
\end{proof}

\subsubsection{The case of cylinders under symmetry assumption} 

Under the symmetry assumption that
\begin{equation} \label{eq_eigen_sym}
\phi(x_0)+\phi(x_1)=0
\end{equation}
for any $\lambda_1(\Sigma)$-eigenfunction there is a more sensitive way to extend $\phi$ across $C_{\eps,h}$
at least if $h$ is close to $h_*$. (Recall that $h_*$ is such that $\lambda_0(C_{\eps,h_*})=\lambda_1(\Sigma)$.)
The starting point for this construction is the observation that the eigenvalues $\lambda_0(C_{\eps,h})$ with Dirichlet boundary conditions and
$\mu_1(C_{\eps,h})$ with Neumann boundary conditions agree if $\eps$ is sufficiently small.
Moreover, for such $\eps$, any $\mu_1(C_{\eps,h})$-eigenfunction is antisymmetric with respect to the involution $(\theta,t) \mapsto (\theta,h-t)$.
Thus, we can hope to find a good quasimode by interpolating from $\phi(x_0)$ to $\phi(x_1)$ by a $\mu_1(C_{\eps,h})$-eigenfunction on $C_{\eps,h}$.

\smallskip

To make this precise, given a $\lambda_1(\Sigma)$-eigenfunction with \eqref{eq_eigen_sym} we define a function $\phi_\eps^N \in W^{1,2}(\Sigma_{\eps,h})$ as follows
\begin{equation*}
\phi_\eps^N=
\begin{cases}
\phi & \ \text{in} \ \Sigma \setminus B_{2\eps^k}(x_0) \cup B_{2\eps^k} (x_1)
\\
\eta_\eps\phi + (1-\eta_\eps) \phi(x_0) & \ \text{in} \ B_{2\eps^k} \setminus B_{\eps^k}(x_0)
\\
\eta_\eps \phi + (1-\eta_\eps )\phi(x_1) & \ \text{in} \ B_{2\eps^k} \setminus B_{\eps^k}(x_1)
\\
\psi^N & \ \text{on} \ C_{\eps,h},
\end{cases}
\end{equation*}
where $\psi^N \colon C_{\eps,h} \to \IR$ is a $\mu_1(C_{\eps,h})$-eigenfunction that is equal to $\phi(x_0)$ respectively $\phi(x_1)$ on 
the boundary components of $C_{\eps,h}$.
Note that such a $\psi^N$ exists precisely since we assume that $\phi$ satisfies \eqref{eq_eigen_sym}.

\smallskip

For $h$ close to $h_*$ the function $\phi_\eps^N$ provides a good quasimode as demonstrated below.
For the proof of \cref{main_2} it is important to carefully keep track of the dependence of the estimate on the parameter $h$.

\begin{lemma} \label{mono_quasimode_2}
For the function $\phi_\eps^N$ defined above and any $\varphi \in W^{1,2}(\Sigma_{\eps,h})$, we have that
\begin{equation*}
\left| \int_{\Sigma_{\eps,h}} \nabla \phi_\eps^N \cdot \nabla \varphi - \lambda_1(\Sigma) \int_{\Sigma_{\eps,h}} \phi_\eps^N \varphi \, \right|
\leq
C \left( \left| \frac{1}{h^2} - \frac{1}{h_*^2}  \right| \eps^{1/2} + \eps^{k/2}\right) \| \varphi \|_{W^{1,2}(\Sigma_{\eps,h})}
\end{equation*}
\end{lemma}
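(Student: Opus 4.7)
The plan is to split the integral $\int_{\Sigma_{\eps,h}} \nabla \phi_\eps^N \cdot \nabla \varphi$ into a contribution from $\Sigma \setminus (B_{\eps^k}(x_0) \cup B_{\eps^k}(x_1))$ and a contribution from $C_{\eps,h}$, and to analyze each separately. On the cylinder, the whole point of the construction is that $\psi^N$ is a genuine Neumann eigenfunction with eigenvalue $\mu_1(C_{\eps,h}) = \pi^2/h^2$, so that integration by parts produces no boundary contribution and yields cleanly
\[
\int_{C_{\eps,h}} \nabla \psi^N \cdot \nabla \varphi = \mu_1(C_{\eps,h}) \int_{C_{\eps,h}} \psi^N \varphi.
\]

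On the $\Sigma$-side I would repeat the cut-off computation from the proof of \cref{mono_quasimode_1}, but now around both points $x_0$ and $x_1$ simultaneously. Writing $\phi_\eps^N|_{\Sigma \setminus \cup B_{\eps^k}} = \tilde\eta_\eps \phi + (1 - \eta_\eps^{x_0})\phi(x_0) + (1-\eta_\eps^{x_1})\phi(x_1)$ with $\tilde\eta_\eps$ the combined cut-off, the product $\tilde\eta_\eps \phi$ extends by zero across the two removed balls to a $W^{1,2}(\Sigma)$ function, and so the identity $\int_\Sigma \nabla \phi \cdot \nabla(\tilde\eta_\eps \varphi) = \lambda_1(\Sigma) \int_\Sigma \phi \tilde\eta_\eps \varphi$ becomes available. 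Expanding $\nabla\phi_\eps^N = \nabla(\tilde\eta_\eps \phi) - \phi(x_0)\nabla\eta_\eps^{x_0} - \phi(x_1)\nabla\eta_\eps^{x_1}$, decomposing $\phi \nabla \tilde\eta_\eps = \phi(x_i) \nabla\eta_\eps^{x_i} + (\phi - \phi(x_i))\nabla\eta_\eps^{x_i}$ in each annulus, the two $\phi(x_i) \int \nabla\eta_\eps^{x_i} \cdot \nabla\varphi$ terms cancel exactly. The remaining $(\phi - \phi(x_i))\nabla\eta_\eps^{x_i} \cdot \nabla\varphi$ and $\varphi \nabla\phi \cdot \nabla\tilde\eta_\eps$ contributions are controlled by $O(\eps^{k/2}) \|\varphi\|_{W^{1,2}}$ by the same H\"older/Sobolev argument used in \cref{mono_quasimode_1}: using $\|\phi - \phi(x_i)\|_{L^\infty(B_{2\eps^k})} = O(\eps^k)$, exponents $p=q=4$, and the uniform Sobolev bound from \cref{mono_unif_sobolev}.

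Combining both pieces, and rewriting $\int_{\Sigma \setminus \cup B_{\eps^k}} \phi \tilde\eta_\eps \varphi$ as $\int_{\Sigma_{\eps,h}} \phi_\eps^N \varphi - \int_{C_{\eps,h}} \psi^N \varphi$ up to a negligible $O(\eps^k)$ term, gives
\[
\int_{\Sigma_{\eps,h}} \nabla \phi_\eps^N \cdot \nabla \varphi - \lambda_1(\Sigma) \int_{\Sigma_{\eps,h}} \phi_\eps^N \varphi = \bigl(\mu_1(C_{\eps,h}) - \lambda_1(\Sigma)\bigr) \int_{C_{\eps,h}} \psi^N \varphi + O(\eps^{k/2}) \|\varphi\|_{W^{1,2}}.
\]
The prefactor equals $\pi^2(1/h^2 - 1/h_*^2)$; meanwhile, an explicit computation gives $\psi^N(\theta,t) = \phi(x_0)\cos(\pi t/h)$ (with $\phi(x_1)=-\phi(x_0)$ by the symmetry assumption), so $\|\psi^N\|_{L^\infty}\le C$ and $\|\psi^N\|_{L^2(C_{\eps,h})} \leq C \eps^{1/2}$. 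Cauchy--Schwarz then produces the bound claimed in the lemma.

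The main delicate point, compared to \cref{mono_quasimode_1}, is that the naive extension of $\phi$ by a constant would leave a correction term of order $\lambda_1(\Sigma)\phi(x_0) \int_{C_{\eps,h}} \varphi$, which is of order one in $\varphi$ and cannot be absorbed into the error. The trick is that the hypothesis $\phi(x_0)+\phi(x_1)=0$ is exactly what makes a Neumann eigenfunction $\psi^N$ with the correct boundary values exist, and replacing the constant extension by such a $\psi^N$ converts that obstructive term into $(\mu_1(C_{\eps,h})-\lambda_1(\Sigma))\int\psi^N\varphi$, which is small precisely in the regime $h$ close to $h_*$ that we care about.
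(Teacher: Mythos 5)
Your proposal is correct and follows essentially the same route as the paper: split into $\Sigma\setminus\cup B_{\eps^k}$ and $C_{\eps,h}$, run the mono\_quasimode\_1 computation on the $\Sigma$-side (the paper simply says ``carries over mutatis mutandis''), and on the cylinder exploit that $\psi^N$ is a genuine Neumann eigenfunction so the remaining error is $(\mu_1(C_{\eps,h})-\lambda_1(\Sigma))\int_{C_{\eps,h}}\psi^N\varphi$, which Cauchy--Schwarz (equivalently, $\|\psi^N\|_\infty\le C$ plus $\area(C_{\eps,h})^{1/2}\le C\eps^{1/2}$ as in the paper) bounds by $C\,|1/h^2-1/h_*^2|\,\eps^{1/2}\|\varphi\|_{W^{1,2}}$. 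Your closing remark correctly identifies the role of the symmetry hypothesis: it is exactly what makes the Neumann extension $\psi^N$ with matching boundary values exist, converting the obstructive order-one constant-extension error into a term proportional to $|\mu_1-\lambda_1|$.
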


\begin{proof}
The estimate in $\Sigma \setminus B_{\eps^k}$ (where $B_{\eps^k}=(B(x_0,\eps^k) \cup B(x_1,\eps^k))$ carries over mutatis mutandis from the proof of \cref{mono_quasimode_1} and implies
\begin{equation*}
\left|  \int_{\Sigma \setminus B_{\eps^k}} \nabla \phi_\eps^N \cdot \nabla \varphi - \lambda_1(\Sigma) \int_{\Sigma \setminus B_{\eps^k}} \phi_\eps^N \varphi \, \right|
\leq
C_p \eps^{2k/q} \| \varphi \| _{W^{1,2}(\Sigma_{\eps,h})}
\end{equation*}
where $1/p+1/q=1/2$.

\smallskip

On the cylinder we have that
\begin{equation*}
\begin{split}
\left|  \int_{C_{\eps,h}} \nabla \phi_\eps^N \cdot \nabla \varphi - \lambda_1(\Sigma)  \int_{C_{\eps,h}} \phi_\eps^N \phi \, \right|
&=
\left|  \int_{C_{\eps,h}} \nabla \psi^N \cdot \nabla \varphi - \lambda_1(\Sigma)  \int_{C_{\eps,h}} \psi^N \varphi \, \right|
\\
& \leq
 |\mu_1(C_{\eps,h}) -\lambda_1(\Sigma)| \int_{C_{\eps,h}} |\psi^N \varphi |
  \\
& \leq 
C \left| \frac{1}{h^2} - \frac{1}{h_*^2} \right| \int_{C_{\eps,h} }|\varphi |
\\
& \leq C \left| \frac{1}{h^2} - \frac{1}{h_*^2}  \right| \eps^{1/2} \| \varphi \|_{W^{1,2}(\Sigma_{\eps,h})}.
\end{split}
\end{equation*}
Combining the above two estimates and specifying to $p=q=4$ implies the assertion.
\end{proof}

\subsection{Quasimodes concentrated on a handle or cross cap}  \label{sec_quasi_handle}

In this subsection we construct a quasimode from the first Dirichlet eigenfunction of the handle $C_{\eps,h}$ or cross cap $M_{\eps,h}$, respectively.
The naive choice of simply extending a Dirichlet eigenfunction to all of $\Sigma_{\eps,h}$ by $0$ turns out to be not good enough.
In order to obtain a good quasimode we need to find a good extension of the normalized first Dirichlet eigenfunction to $\Sigma \setminus B_{\eps^k}$.
In principal one would like to use the Green function of $\Delta-\lambda$ with pole at $x_0$.
While this works very well for a \emph{fixed} choice of the parameter $h$, we need to be more careful when considering the whole family $\Sigma_{\eps,h}$.
The presence of a non-trivial kernel of $\Delta - \lambda_0(M_{\eps,h})$ for $h = h_*$ forces us to modify
the Green function also for $h$ close to $h_*$ in order to make our estimates uniform.

\subsubsection{The first eigenfunction of $M_{\eps,h}$ and $C_{\eps,h}$}

Since we will only use the first Dirichlet eigenfunction of $M_{\eps,h}$ from here on, we simply denote it by $\psi_{\eps,h}$
instead of $\psi_{\eps,h,0}$.
A direct computation immediately gives that $\psi_{\eps,h}$ is explicitly given by
\begin{equation*} 
\psi_{\eps,h} = \eps^{-1/2}\psi_h= \frac{1}{\sqrt{\pi \eps h/2}} \sin\left(\frac{t \pi}{ h} \right) 
\end{equation*}
parametrized on the covering space $S^1(\eps) \times [0,h]$.
For the $L^1$-norm, we have that
\begin{equation} \label{eq_l1_norm}
\int_{M_{\eps,h}} \psi_{\eps,h}
=
4 \left( \frac{h}{2 \pi}\right)^{1/2} \eps^{1/2}.
\end{equation}
Finally, for the normal derivative, we get that
$$
\int_{\partial M_{\eps,h}} \partial_\nu \psi_{\eps,h} d \mathcal{H}^1
=
-2 \pi \eps \frac{\pi}{h \sqrt{\pi \eps h/2}}
=
-\left(\frac{2 \pi}{h}\right)^{3/2} \eps^{1/2}.
$$
It is the scaling of this term in $\eps$ combined with the
presence of a non-trivial kernel of $\Delta - \lambda_1(\Sigma)$ that forces the order of the leading order term in \cref{main_2} to be on scale $\eps^{1/2}$.

Let us briefly discuss what happens for the naive quasimode given by extending $\psi_{\eps,h}$ to all of $\Sigma_{\eps,h}$ by $0$.
Thanks to \eqref{eq_int_bd_sobolev} integration by parts\footnote{Note that we integrate with respect to the Hausdorff measure of $\partial M_{\eps,h}$ and not of $\partial B_{\eps^k}$ below} implies that
\begin{equation}
\begin{split}
\left| \int_{\Sigma_{\eps,h}} \nabla \varphi \cdot \nabla \psi_{\eps,h} - \lambda_0(M_{\eps,h}) \int_{\Sigma_{\eps,h}} \varphi \psi_{\eps,h} \right|
&\leq 
\int_{\partial M_{\eps,h}} |\varphi| |\partial_\nu \psi_{\eps,h}|\, d \mathcal{H}^1
\\
&\leq
C \eps^{1/2} \log(1/\eps^k) \|\varphi\|_{W^{1,2}(\Sigma_{\eps,h})}.
\end{split}
\end{equation}
Moreover, this bound is easily seen to be sharp. 

The extension of $\psi_{\eps,h}$ to $\Sigma \setminus B_{\eps^k}$ constructed below cancels out the normal derivative along $\partial B_{\eps^k}$.
This has two advantages over the naive quasimode $\psi_{\eps,h}$.
Firstly, it is an approximate solution on a strictly smaller scale.
Secondly, we can identify the largest error term very precisely using the convergence result on the eigenfunctions from \cref{conv}.

The very same discussion applies to the first Dirichlet eigenfunction on $C_{\eps,h}$ in this case the normalized eigenfunction
is given by $1/ (\sqrt{2} \sqrt{\pi\eps h/2}) \sin (t \pi /h)$ and 
we have that
$$
\int_{\partial C_{\eps,h}} \partial_\nu \frac{1}{\sqrt{\pi \eps h}} \sin \left( \frac{t \pi}{h} \right) d \mathcal{H}^1
=
- 4 \left( \frac{\pi}{h} \right)^{3/2} \eps^{1/2}.
$$

\subsubsection{The Green's function of $(\Delta_\Sigma - \lambda)$}

We need some preliminaries on a function closely related to the Green's function of the operator $\Delta-\lambda$ on $\Sigma$.
For the convenience of the reader, the short \cref{sec_green} contains a proof of the facts on Green's functions that we make use of below.
Recall that if we normalize $\area(\Sigma)=1$ the Green's function $G(x,y)$ of $\Delta$ solves

\begin{equation*}
\Delta_y G(x,y)=\delta_x - 1.
\end{equation*}
in the sense of distributions.
Near the diagonal, the Green's function is asymptotic to the Green's function of the Euclidean plane.
More precisely, for $x \in \Sigma$ fixed, we have that
\begin{equation} \label{green_asymp_1}
G(x,y)=\frac{1}{2 \pi} \log\left(\frac{1}{|x-y|}\right) + \psi_x(y),
\end{equation}
where $|x-y|$ is the distance with respect to the Euclidean metric in conformal coordinates near $x$ normalized such that $g=fg_e$ with $f(x)=1$ and $\psi_x$ is a smooth function.
Off the diagonal, $G$ is a smooth function.
In particular, we find that
\begin{equation} \label{integral_green_bd}
\int_\Sigma |G(x,y)|^p dy \leq C
\end{equation}
for any $p < \infty$ and some uniform constant $C=C(\Sigma,p)$.

Let $(\phi_0,\dots,\phi_{K-1})$ be an orthonormal basis of the $\lambda_1(\Sigma)$-eigenspace.
We consider the function
\begin{equation*}
f(y)=G(x_0,y)-\sum_{i=0}^{K-1} \int_\Sigma G(x_0,z) \phi_i(z)dz \, \phi_i(y),
\end{equation*}
which is well-defined by H{\"o}lder's inequality and \eqref{integral_green_bd}.
Also from \eqref{integral_green_bd} and H{\"o}lder's inequality, we find that
\begin{equation} \label{f_lp_bd}
\int_\Sigma |f|^p \leq C,
\end{equation}
for a constant $C=C(\Sigma,p)$.
In particular, for any $\lambda \in (0,\lambda_{K+1}(\Sigma))$ there is unique solution $u_\lambda \in W^{1,2}(\Sigma)$ that is orthogonal to $\langle \phi_0, \dots , \phi_{K-1} \rangle$ and such that
\begin{equation*}
(\Delta-\lambda)u_\lambda = \lambda f+1
\end{equation*}
since $f$ and the constant functions are orthogonal to the kernel and hence also the cokernel of $(\Delta-\lambda)$
(which for the relevant $\lambda$ is trivial if $\lambda \neq \lambda_1(\Sigma)$ and  equal to $\langle \phi_0, \dots , \phi_{K-1} \rangle$ if $\lambda = \lambda_1(\Sigma)$).
It follows from \eqref{f_lp_bd} and standard elliptic estimates that $u_\lambda$ is uniformly bounded in $W^{2,p}(\Sigma)$ as long as $\lambda \in [\delta_0 ,\lambda_{K+1}(\Sigma)-\delta_0]$ for some small $\delta_0>0$.
We now fix $\delta_0>0$ once and for all such that 
\begin{equation}
\label{delta}
\delta_0 < \lambda_1(\Sigma) < \lambda_{K+1}(\Sigma)-\delta_0.
\end{equation}
It is convenient to make some more restrictions on $h_0$ at this point.
In addition to \eqref{eq_choice_h}, we also assume that
\begin{equation} \label{eq_choice_h_0}
\lambda_0(C_{\eps,h_0})=\lambda_0(M_{\eps,h_0})=\frac{\pi}{h_0^2} \leq \lambda_{K+1}(\Sigma) -\delta_0.
\end{equation}
The Sobolev embedding theorem yields that $u_\lambda$ is uniformly bounded in $C^{1,\alpha}(\Sigma)$ for some $\alpha>0$ provided we choose $p>2$ above.
Consider the function 
\begin{equation*}
H_\lambda(y)=G(x_0,y)+u_\lambda(y).
\end{equation*}
If we choose the orhonormal basis of $\lambda_1(\Sigma)$-eigenfunctions such that
\begin{equation} \label{eq_normal_basis}
\phi_1(x_0)= \dots =\phi_{K-1}(x_0)=0,
\end{equation}
we find that $H_\lambda$ solves
\begin{equation} \label{eq_green_proj_kernel}
\begin{split}
(\Delta - \lambda) H_\lambda
&= 
- \lambda \sum_{i=0}^{K-1}\int_\Sigma G(x_0,z) \phi_i(z)dz \, \phi_i
\\
& =
- \frac{\lambda}{\lambda_1(\Sigma)} \phi_0(x_0)\phi_0
\end{split}
\end{equation}
in $\Sigma \setminus \{x_0\}$ by the normalization \eqref{eq_normal_basis}.

Since $u$ is uniformly bounded in $C^{1,\alpha}(\Sigma)$ for a fixed $\alpha>0$, we find from \eqref{green_asymp_1} that the function
\begin{equation*}
e_{\lambda}(y):=H_\lambda(y)- \frac{1}{2 \pi} \log\left(\frac{1}{|x_0-y|}\right) 
\end{equation*}
is uniformly bounded in $C^{1,\alpha}(\Sigma)$.
Therefore, 
\begin{equation*}
e_{\eps,\lambda}:= \frac{2\pi e_\lambda(x_0)}{\log(1/\eps^k)}=o(1)
\end{equation*}
as $\eps \to 0$ uniformly in $\lambda \in [\delta_0,\lambda_K+1(\Sigma)-\delta_0]$.

Denote by $H_{\lambda,0}$ the function constructed above with a pole at $x_0$ and by $H_{\lambda,1}$ the analogously constructed function
with a pole at $x_1$. 
Similarly, we write $e_{\lambda,0}$ and $e_{\lambda,1}$ for the corresponding terms in the asymptotic expansion of $H_{\lambda,0}$ and $H_{\lambda,1}$, respectively.
Consider 
$$
J_\lambda = H_{\lambda,0} + H_{\lambda,1},
$$
which has poles at $x_0$ and $x_1$.

If we choose the orthonormal basis $(\phi_0,\dots,\phi_{K-1})$ of $\lambda_1(\Sigma)$-eigenfunctions such that
$$
\phi_i(x_0) + \phi_i(x_1) =0
$$
for $i=1,\dots,K-1$ we find similarly as in
\eqref{eq_green_proj_kernel} 
that
\begin{equation} \label{eq_kernel_handle}
(\Delta - \lambda) J_\lambda =  - \frac{\lambda}{\lambda_1(\Sigma)} (\phi_0(x_0)+\phi_0(x_1))\phi_0.
\end{equation}
In particular, if $x_0$ and $x_1$ are two points in $\Sigma$ such that 
\begin{equation} \label{eq_assum_key_cancel}
\phi(x_0) + \phi(x_1) = 0
\end{equation}
for any $\lambda_1(\Sigma)$-eigenfunction $\phi$
we have that
\begin{equation} \label{eq_sym_green}
(\Delta - \lambda) J_{\lambda} = 0
\end{equation}
in $\Sigma \setminus \{x_0,x_1\}$.
This and \cref{mono_quasimode_2} are the two reasons for the good control of the first eigenvalue in the second part of \cref{main_1}.

\subsubsection{Construction of the quasimodes for cross caps.} \label{sub_sec_quasi_cross}

Recall the definition of the cut-off functions $\eta_\eps \colon \Sigma \setminus B_{\eps^k} \to  [0,1]$ defined at \eqref{eq_def_cut_off}
and that we write
$$
\psi_{\eps,h}=\eps^{-1/2}\psi_h
$$
 the ground state of $M_{\eps,h}$.
We define $\tilde \psi_{\eps,h} \in W^{1,2}(\Sigma_{\eps,h})$ as follows,
\begin{equation*}
\tilde \psi_{\eps,h}(y)=
\begin{cases}
 \left(\frac{2 \pi}{h}\right)^{3/2}\eps^{1/2} \left(\eta_\eps H_\lambda(y) + (1-\eta_\eps)   \left(\frac{1}{2\pi}\log \left( \frac{1}{|x_0-y|} \right) + e_{\lambda}(x_0) \right) \right) & \text{on} \ \Sigma \setminus B_{\eps^k}
\\
\psi_{\eps,h}(y) + \left(\frac{2\pi}{h}\right)^{3/2}(1+e_{\eps,\lambda}) \frac{1}{2\pi}\log\left({1}/{\eps^k}\right) \eps^{1/2} & \text{on} \ M_{\eps,h},
\end{cases}
\end{equation*}
where $\lambda=\lambda_0(M_{\eps,h})$. 
By construction, $\tilde \psi_{\eps,h}$ is a Lipschitz function, in particular we have $\tilde \psi_{\eps,h} \in W^{1,2}(\Sigma_{\eps,h})$.
The key property of $\tilde \psi_{\eps,h}$ is that it is rotationally symmetric near $\partial B_{\eps^k}$ and has
\begin{equation*}
\begin{split}
\int_{\partial (\Sigma \setminus B_{\eps^k})} \partial_{\nu_{\it{eucl}}} \tilde \psi_{\eps,h}d\mathcal{H}^1_{\it{eucl}}
&=
\frac{1}{2\pi} \left(\frac{2 \pi}{h} \right)^{3/2} \eps^{1/2} \int_{\partial B_{\eps^k}} \partial_r \log(r) d \mathcal{H}^1_{\it{eucl}}
\\
&=
\left(\frac{2 \pi}{h} \right)^{3/2} \eps^{1/2}
\\
&=
-\int_{\partial M_{\eps,h}} \partial_{\nu} \psi_{\eps,h}d\mathcal{H}^1.
\end{split}
\end{equation*}
Here and also below we use the convention that the domain of integration also indicates which normal and measure we use.
This is particularly important along $\partial M_{\eps,\kappa}=\partial B_{\eps^k}$, where these differ significantly.

Let $u_{\eps,h}$ be an $L^2(\Sigma_{\eps,h})$-normalized eigenfunction on $\Sigma_{\eps,h}$ with eigenvalue $\lambda_{\eps,h} \in (\delta_0,\lambda_{K+1}(\Sigma)-\delta_0)$, such that
\begin{equation} \label{eq_assum_conc_cyl}
\int_{M_{\eps,h}} |u_{\eps,h}|^2 \geq c_0>0
\end{equation}
 for some fixed constant $c_0$.
Thanks to the last part of \cref{conv} and recalling our choice of $h_0$ at \eqref{eq_choice_h_0} this implies that
\begin{equation*}
\left| \int_{M_{\eps,h}} \psi_{\eps,h} u_{\eps,h} \right| \geq 2c_1
\end{equation*}
for some uniform $c_1=c_1(c_0,h_0,h_1)>0$, which in turn implies that
\begin{equation} \label{eq_sign}
\left| \int_{M_{\eps,h}} \tilde \psi_{\eps,h} u_{\eps,h} \right| \geq c_1
\end{equation}
for $\eps$ sufficiently small.

We now provide a first asymptotic expansion of $\lambda_{\eps,h}$.
Below, for simplicity, we write $\lambda = \lambda_0(M_{\eps,h})$.

\begin{lemma} \label{lem_asymp_1}
The eigenvalue $\lambda_{\eps,h}$ has the asymptotic expansion
\begin{equation} \label{eq_asymp_1}
\begin{split}
\lambda_{\eps,h}
&= 
\lambda  - \left(\frac{2 \pi}{h} \right)^{3/2} \frac{\frac{\lambda}{\lambda_1(\Sigma)} \int_{\Sigma \setminus B_{2\eps^k}} \phi_0(x_0) \phi_0 u_{\eps,h} \eps^{1/2}+ \frac{\lambda}{2\pi} (1+e_{\eps,\lambda}) \int_{M_{\eps,h}} u_{\eps,h} \log(1/\eps^k)\eps^{1/2}  }{  \int_{\Sigma_{\eps,h}}  u_{\eps,h} \tilde \psi_{\eps,h}}
\\
&+ O(\eps^{k}\log(1/\eps^k))
\end{split}
\end{equation}
as $\eps \to 0$, uniformly in $h \in [h_0,h_1]$ as long as $u_{\eps,h}$ satisfies \eqref{eq_assum_conc_cyl} with $c_0>0$ fixed.
\end{lemma}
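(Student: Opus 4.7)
The key is to test the eigenvalue equation against the quasimode $\tilde\psi_{\eps,h}$. Since $u_{\eps,h}$ is an eigenfunction,
\begin{equation*}
\int_{\Sigma_{\eps,h}} \nabla u_{\eps,h}\cdot\nabla \tilde\psi_{\eps,h} = \lambda_{\eps,h} \int_{\Sigma_{\eps,h}} u_{\eps,h}\tilde\psi_{\eps,h}.
\end{equation*}
I would integrate the left-hand side by parts separately on $\Sigma\setminus B_{\eps^k}$ and on $M_{\eps,h}$. The boundary contributions along $\partial B_{\eps^k}$ cancel: on each side of the interface, $\tilde\psi_{\eps,h}$ is rotationally symmetric and its outward normal derivative is constant there, and the flux identity computed just before the statement of the lemma says those two constants, weighted by the Euclidean length element on $\partial B_{\eps^k}$ and the cylindrical length element on $\partial M_{\eps,h}$, are exact negatives of one another. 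The combined boundary term therefore vanishes for any value of $u_{\eps,h}$ on $\partial B_{\eps^k}$.

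Next one computes $\Delta \tilde\psi_{\eps,h}$ piece by piece. On $M_{\eps,h}$, since $\tilde\psi_{\eps,h}=\psi_{\eps,h}+K_{\eps,h}$ with $K_{\eps,h}=(2\pi/h)^{3/2}(1+e_{\eps,\lambda})\log(1/\eps^k)\eps^{1/2}/(2\pi)$ constant, one has $\Delta\tilde\psi_{\eps,h}=\lambda\psi_{\eps,h}=\lambda(\tilde\psi_{\eps,h}-K_{\eps,h})$. On $\Sigma\setminus B_{2\eps^k}$, $\tilde\psi_{\eps,h}=(2\pi/h)^{3/2}\eps^{1/2}H_\lambda$ and \eqref{eq_green_proj_kernel} gives $(\Delta-\lambda)\tilde\psi_{\eps,h}=-(2\pi/h)^{3/2}\eps^{1/2}(\lambda/\lambda_1(\Sigma))\phi_0(x_0)\phi_0$. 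Assembling the two identities and moving the $\lambda\int u_{\eps,h}\tilde\psi_{\eps,h}$ term to the left yields
\begin{equation*}
(\lambda_{\eps,h}-\lambda)\int u_{\eps,h}\tilde\psi_{\eps,h} = -\Bigl(\tfrac{2\pi}{h}\Bigr)^{3/2}\!\left(\tfrac{\lambda\phi_0(x_0)}{\lambda_1(\Sigma)}\int_{\Sigma\setminus B_{2\eps^k}}\!\!\phi_0 u_{\eps,h}\,\eps^{1/2} + \tfrac{\lambda(1+e_{\eps,\lambda})}{2\pi}\log(1/\eps^k)\eps^{1/2}\int_{M_{\eps,h}}\!\!u_{\eps,h}\right) + E_\eps,
\end{equation*}
and the lower bound \eqref{eq_sign} on $|\int u_{\eps,h}\tilde\psi_{\eps,h}|$ allows one to divide by this factor and obtain \eqref{eq_asymp_1}.

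The remaining and main technical task is to verify $E_\eps=O(\eps^k\log(1/\eps^k))$, uniformly in $h\in[h_0,h_1]$. The only nontrivial contribution comes from the transition annulus $B_{2\eps^k}\setminus B_{\eps^k}$, where $\tilde\psi_{\eps,h}$ interpolates via $\eta_\eps$ between $(2\pi/h)^{3/2}\eps^{1/2}H_\lambda$ and $(2\pi/h)^{3/2}\eps^{1/2}L$ with $L(y)=(2\pi)^{-1}\log(1/|x_0-y|)+e_\lambda(x_0)$. Expanding the Laplacian of this combination and using $\Delta L=0$ off the pole, the surviving cross terms are $(e_\lambda-e_\lambda(x_0))\Delta\eta_\eps$ and $2\nabla\eta_\eps\cdot\nabla e_\lambda$. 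Combining the uniform $C^{1,\alpha}$ bound on $e_\lambda$ (so $|e_\lambda-e_\lambda(x_0)|=O(\eps^{k\alpha})$ on the annulus) with $|\nabla\eta_\eps|\le C\eps^{-k}$, $|\Delta\eta_\eps|\le C\eps^{-2k}$, $|B_{2\eps^k}\setminus B_{\eps^k}|=O(\eps^{2k})$, and the pointwise bound $|u_{\eps,h}|(r)\le C\log(1/r)$ from \cref{max_eigen_neu_ell_est}, one finds that each contribution to $E_\eps$ is of order $\eps^{k+1/2}\log(1/\eps)$ or better, which is absorbed into $O(\eps^k\log(1/\eps^k))$. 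The most delicate step is really the pointwise flux cancellation across $\partial B_{\eps^k}$: it works because the factor $\eps^{1-k}$ in the ratio of the Euclidean to the cylindrical length elements is exactly compensated by the reciprocal factor in the ratio of the constant normal derivatives of $\tilde\psi_{\eps,h}$ from the two sides, as displayed just before the lemma.
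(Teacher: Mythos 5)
Your overall strategy is the same as the paper's: test the weak eigenvalue equation against $\tilde\psi_{\eps,h}$, integrate by parts separately on the two pieces, exploit the fact that $\partial_\nu\tilde\psi_{\eps,h}\cdot d\mathcal{H}^1$ matches across the interface so the boundary fluxes cancel, and then read off the two leading terms in the numerator from $\Delta\tilde\psi_{\eps,h}=\lambda(\tilde\psi_{\eps,h}-K_{\eps,h})$ on $M_{\eps,h}$ and from \eqref{eq_green_proj_kernel} on $\Sigma\setminus B_{2\eps^k}$, estimating the transition annulus separately. Two details in your error analysis need fixing, though. First, your decomposition of $(\Delta-\lambda)\tilde\psi_{\eps,h}$ on the annulus is incomplete: besides $(e_\lambda-e_\lambda(x_0))\Delta\eta_\eps$ and $2\nabla\eta_\eps\cdot\nabla e_\lambda$, the product rule applied to $\eta_\eps H_\lambda+(1-\eta_\eps)L$ also produces the term $-\lambda(1-\eta_\eps)L$ (since $(\Delta-\lambda)L=-\lambda L$, not $0$) as well as $-(\lambda/\lambda_1)\eta_\eps\phi_0(x_0)\phi_0$; both are harmless, of size $O(\eps^{2k}\log^2(1/\eps^k))\cdot\eps^{1/2}$, but they must be accounted for. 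Second, and more substantively, writing $|e_\lambda-e_\lambda(x_0)|=O(\eps^{k\alpha})$ uses only the $C^{0,\alpha}$ seminorm and is not sufficient: with this bound the term $\int_{B_{2\eps^k}\setminus B_{\eps^k}}|u_{\eps,h}||\Delta\eta_\eps||e_\lambda-e_\lambda(x_0)|\,\eps^{1/2}$ would only be $O(\eps^{k\alpha+1/2}\log(1/\eps^k))$, which exceeds the target $O(\eps^k\log(1/\eps^k))$ whenever $\alpha<1-1/(2k)$. You should instead use that $e_\lambda\in C^{1,\alpha}$ is in particular Lipschitz, so $|e_\lambda-e_\lambda(x_0)|\le C\eps^k$ on the annulus, which gives $O(\eps^{k+1/2}\log(1/\eps^k))$ as required (this is what the paper does). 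Finally, the paper does not claim exact cancellation of the boundary terms: it bounds their difference by $C\eps^{1/2}\log(1/\eps^k)\|g-g_{eucl}\|_{L^\infty(\partial B_{\eps^k})}\le C\eps^k$ after passing from the metric $g$ to the Euclidean metric on $\partial B_{\eps^k}$; this extra $O(\eps^k)$ is absorbed into the final error, so your conclusion survives, but your claim of an exact pointwise cancellation glosses over the conformal discrepancy between $g$ and $g_{eucl}$ on the $\Sigma$ side.
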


\begin{rem} \label{rem_order}
Note that by H{\"o}lder's inequality,
\begin{equation*}
\begin{split}
\left |\int_{M_{\eps,h}} u_{\eps,h} \log(1/\eps^k)\eps^{1/2}  \right|
&\leq 
\area(M_{\eps,h})^{1/2}\eps^{1/2} \log(1/\eps^k)\|u_{\eps,h}\|_{L^2}
\\
& \leq
 C \eps \log(1/\eps^k)\|u_{\eps,h}\|_{L^2},
 \end{split}
\end{equation*}
so that the first summand in the enumerator in \eqref{eq_asymp_1} is the term of lower order. 
\end{rem}

\begin{proof}
Since $u_{\eps,h}$ is an eigenfunction and $\tilde \psi_{\eps,h} \in W^{1,2}(\Sigma_{\eps,h})$ we have that
\begin{equation*}
 \int_{\Sigma_{\eps,h}} \nabla u_{\eps,h} \nabla \tilde \psi_{\eps,h}
=
\lambda_{\eps,h}
\int_{\Sigma_{\eps,h}} u_{\eps,h} \tilde \psi_{\eps,h}.
\end{equation*}
On the other hand, we have that
\begin{equation*}
\begin{split}
 \int_{\Sigma_{\eps,h}} \nabla u_{\eps,h} \nabla \tilde \psi_{\eps,h}
 &=
 \int_{\Sigma \setminus B_{\eps^k}} \nabla u_{\eps,h} \nabla \tilde \psi_{\eps,h}
 + 
 \int_{M_{\eps,h}} \nabla u_{\eps,h} \nabla \tilde \psi_{\eps,h}
 \\
 &=
 \int_{\Sigma \setminus B_{\eps^k}} u_{\eps,h} \Delta \tilde \psi_{\eps,h} 
 + 
 \int_{\partial B_{\eps^k}} u_{\eps,h} \partial_\nu  \tilde \psi_{\eps,h} d \mathcal{H}^1
 \\
 &+
  \int_{M_{\eps,h}} u_{\eps,h} \Delta \tilde \psi_{\eps,h} 
  -
  \int_{\partial M_{\eps,h}} u_{\eps,h} \partial_{\nu} \tilde \psi_{\eps,h} d \mathcal{H}^1
  \\
  & =
   \int_{\Sigma \setminus B_{\eps^k}} u_{\eps,h} \Delta \tilde \psi_{\eps,h} 
   + \int_{M_{\eps,h}} u_{\eps,h} \Delta \tilde \psi_{\eps,h}
   + O(\eps^k).
\end{split}
\end{equation*}
The last step used that
\begin{equation*}
\begin{split}
& \left|  \int_{\partial B_{\eps^k}} u_{\eps,h} \partial_\nu \tilde \psi_{\eps,h} d \mathcal{H}^1 - \int_{\partial M_{\eps,h}} u_{\eps,h} \partial_{\nu} \tilde \psi_{\eps,h} d \mathcal{H}^1
\right| 
\\
&=
\left|
 \int_{\partial B_{\eps^k}} u_{\eps,h} \nabla \tilde \psi_{\eps,h} \cdot \nu d \mathcal{H}^1
 -\int_{\partial B_{\eps^k}} u_{\eps,h} \nabla \tilde \psi_{\eps,h} \cdot \nu_{\it{eucl}} d \mathcal{H}^1_{\it{eucl}}
\right|
\\
&\leq 
C \eps^{1/2} \log (1/\eps^k) \| g - g_{\it{eucl}}\|_{L^\infty(\partial B_{\eps^k})}
 \leq C \eps^k,
 \end{split}
\end{equation*}
where we have used that $|u_{\eps,h}| \leq C \log(1/\eps^k)$ on $\partial B_{\eps^k}$ (see \cref{max_eigen_neu_ell_est}).
Moreover, we have that
\begin{equation*}
\begin{aligned}
\int_{M_{\eps,h}}  u_{\eps,h} \Delta \tilde \psi_{\eps,h} =
\lambda \int_{M_{\eps,h}} u_{\eps,h} \tilde  \psi_{\eps,h} - \left(\frac{2 \pi}{h}\right)^{3/2}\frac{\lambda}{2\pi} (1+e_{\eps,\lambda}) \int_{M_{\eps,h}} u_{\eps,h} \log(1/\eps^{k}) \eps^{1/2}.
\end{aligned}
\end{equation*}
In order to estimate the integral on $\Sigma \setminus B_{\eps^k}$ we have that
\begin{equation*}
\begin{aligned}
\int_{\Sigma \setminus B_{\eps^k}} &u_{\eps,h} (\Delta - \lambda) \tilde \psi_{\eps,h}
\\
=&
-\eps^{1/2}\frac{\lambda}{\lambda_1(\Sigma)}\int_{\Sigma \setminus B_{2 \eps^k}} u_{\eps,h}\phi_0(x_0)\phi_0
+
\int_{B_{2 \eps^k} \setminus B_{\eps^k}} u_{\eps,h}(\Delta-\lambda) \tilde \psi_{\eps,h}.
\end{aligned}
\end{equation*}
thanks to \eqref{eq_green_proj_kernel}.
It remains to estimate the second summand.
First note that, using \eqref{eq_green_proj_kernel}, we have in $\Sigma \setminus B_{\eps^k}$ that
\begin{equation*}
\begin{split}
\left(\frac{2 \pi}{h} \right)^{-3/2}\eps^{-1/2}
&
(\Delta - \lambda) \tilde \psi_{\eps,h}
=
\eta_\eps(\Delta-\lambda)H_\lambda
 -
 2\nabla \eta_\eps \cdot \nabla H_\lambda
  +
 H_\lambda \Delta \eta_\eps
 \\
 &+
\frac{2}{2\pi} \nabla \eta_\eps \cdot \nabla \log \left( \frac{1}{|x_0-y|}\right)
-
\left( \frac{1}{2 \pi}  \log \left( \frac{1}{|x_0-y|}\right) + e_\lambda(x_0) \right) \Delta \eta_\eps
\\
& - 
\lambda (1-\eta_\eps) \left( \frac{1}{2\pi} \log \left( \frac{1}{|x_0-y|}\right) + e_\lambda(x_0) \right)
\\
&=
-\frac{\lambda}{\lambda_1(\Sigma)} \eta_\eps \phi_0(x_0)\phi_0   
  - 
 2\nabla \eta_\eps \cdot \left( \nabla H_\lambda- \frac{1}{2\pi} \nabla \log \left( \frac{1}{|x_0-y|} \right) \right)
 \\
 &+
 \Delta \eta_\eps \left(H_\lambda -  \frac{1}{2 \pi}  \log \left( \frac{1}{|x_0-y|}\right) - e_\lambda(x_0) \right)
 \\
& - 
\lambda (1-\eta_\eps) \left( \frac{1}{2\pi} \log \left( \frac{1}{|x_0-y|}\right) + e_\lambda(x_0) \right)
 \end{split}
\end{equation*}
since $\Delta \log(1/|x_0-y|)=0$ thanks to the conformal covariance of the Laplacian.
Therefore, we find that
\begin{equation*}
\begin{split}
 \int_{B_{2 \eps^k} \setminus B_{\eps^k}} &
 \left| u_{\eps,h} (\Delta - \lambda) \tilde \psi_{\eps,h}
\right|
 \leq
 C     \left( 
 \int_{B_{2 \eps^k} \setminus B_{\eps^k}}  |u_{\eps,h}||\eta_\eps \phi_0(x_0)\phi_0|
 + \right.
  \int_{B_{2 \eps^k} \setminus B_{\eps^k}} |u_{\eps,h}| |\nabla \eta_\eps| |\nabla e_\lambda|
  \\
  &+ \left.
   \int_{B_{2 \eps^k} \setminus B_{\eps^k}}  |u_{\eps,h}||\Delta \eta_\eps| |e_\lambda - e_\lambda(x_0)|
   + 
    \lambda \int_{B_{2 \eps^k} \setminus B_{\eps^k}}  |u_{\eps,h}| \log(1/\eps^k)
    \right) \eps^{1/2}
   \\
    \leq &
   C (\eps^{2k}\log(1/\eps^k) + \eps^{2k} \eps^{-k} \log(1/\eps^k) +  \eps^{2k} \log(1/\eps^k) \eps^{-2k} \eps^{k}
   + \log^2(1/\eps^k)\eps^{2k})\eps^{1/2}
   \\
   \leq & C \eps^k
\end{split}
\end{equation*}
for $\eps>0$ sufficiently small,
where we have used \cref{max_eigen_neu_ell_est} and $e_\lambda \in C^{1,\alpha}(\Sigma)$.
The assertion now follows from combining all the above estimates.
\end{proof}

Recall that we chose an orthonormal basis of $\lambda_1(\Sigma)$-eigenfunctions at \eqref{eq_normal_basis} such that all eigenfunctions orthogonal to
$\phi_0$ vanish at $x_0$.
We assume for the rest of this subsection that
$$
\phi_0(x_0) \neq 0.
$$
In this case we can define a second quasimode $\chi_{\eps,h}$ concentrated on $M_{\eps,h}$ slightly different from $\tilde \psi_{\eps,h}$.
It only differs from $\tilde \psi_{\eps,h}$ in the correction term necessary because of the different boundary values
of $\psi_{\eps,h}$ and $\log(1/|x_0-\cdot|)$ along $\partial M_{\eps,h}=\partial B_{\eps^k}(x_0)$.
We define
\begin{equation*}
\begin{aligned}
\chi_{\eps,h}(y)=
&\left(\frac{2 \pi}{h} \right)^{3/2} \eps^{1/2} \left(\eta_\eps H_\lambda(y) + (1-\eta_\eps)   \left(\frac{1}{2\pi}\log \left( \frac{1}{|x_0-y|} \right) + e_{\lambda}(x_0) \right) \right)
\\\hspace{1cm}&-\left(\frac{2 \pi}{h} \right)^{3/2}\frac{1}{2 \pi}(1+e_{\eps,\lambda})\log\left({1}/{\eps^k}\right) \eps^{1/2} \frac{\phi_{0,\eps}}{\phi_0(x_0)}
\end{aligned}
\end{equation*}
on  $\Sigma \setminus B_{\eps^k}$ and by $\chi_{\eps,h}=\psi_{\eps,h}$ on $M_{\eps,h}$.
Here, $\phi_{0,\eps}$ denotes the function constructed from $\phi_0$ in \eqref{eq_quasimod_surf}.
Note that $\phi_{0,\eps} = \phi_0(x_0)$ along $\partial B_{\eps^k}$ and $\phi_{0,\eps}=\phi_0$ outside of $B_{2\eps^k}$ by construction.

As indicated above, we have that
\begin{equation} \label{eq_diff_quasimod}
\tilde \psi_{\eps,h} - \chi_{\eps,h}
=
\begin{cases}
\left(\frac{2 \pi}{h} \right)^{3/2}\frac{1}{2 \pi}(1+e_{\eps,\lambda})\log\left({1}/{\eps^k}\right) \eps^{1/2} \frac{\phi_{0,\eps}}{\phi_0(x_0)} & \ \text{in} \ \Sigma \setminus B_{\eps^k}
\\
\left(\frac{2\pi}{h}\right)^{3/2}\frac{1}{2\pi}(1+e_{\eps,\lambda})\log\left({1}/{\eps^k}\right) \eps^{1/2} & \ \text{on}  \ M_{\eps,h}.
\end{cases}
\end{equation}

\smallskip

If we take an eigenfunction $u_{\eps,h}$ as in \eqref{eq_assum_conc_cyl} above, we have that
$$
\left| \int_{\Sigma_{\eps,h}} \chi_{\eps,h} u_{\eps,h} \right| \geq c_1
$$ 
for $\eps$ sufficiently small.
The corresponding asymptotic expansion for the associated eigenvalue $\lambda_{\eps,h}$ is as follows.

\begin{lemma} \label{lem_asymp_2}
The eigenvalue $\lambda_{\eps,h}$ has the asymptotic expansion
\begin{equation} \label{eq_asymp_2}
\begin{aligned}
\lambda_{\eps,h}
= 
\lambda &- \left(\frac{2 \pi}{h} \right)^{3/2}\frac{  \frac{\lambda}{\lambda_1(\Sigma)} \int_{\Sigma \setminus B_{2\eps^k}} \phi_0(x_0)\phi_0 u_{\eps,h} \eps^{1/2}}{ \int_{\Sigma_{\eps,h}}  u_{\eps,h} \chi_{\eps,h}} 
\\
&- \left(\frac{2 \pi}{h} \right)^{3/2} \frac{ \frac{\lambda_1(\Sigma)-\lambda}{2\pi} (1+e_{\eps,\lambda}) \int_{\Sigma \setminus B_{2\eps^k}} \frac{\phi_0}{\phi_0(x_0)}u_{\eps,h}  \eps^{1/2} \log(1/\eps^k) }{  \int_{\Sigma_{\eps,h}}  u_{\eps,h} \chi_{\eps,h}}
\\
&+ O(\eps^{k}\log(1/\eps^k))
\end{aligned}
\end{equation}
as $\eps \to 0$, uniformly in $h \in [h_0,h_1]$ as long as $u_{\eps,h}$ satisfies \eqref{eq_assum_conc_cyl} with $c_0>0$ fixed.
\end{lemma}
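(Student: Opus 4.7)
The plan is to carry out an integration-by-parts computation for the quasimode $\chi_{\eps,h}$ in direct parallel to the proof of \cref{lem_asymp_1}. The crucial structural advantage of $\chi_{\eps,h}$ over $\tilde\psi_{\eps,h}$ is that, by construction, it restricts to the Dirichlet ground state $\psi_{\eps,h}$ on $M_{\eps,h}$ itself (with no additive constant), so $(\Delta-\lambda)\chi_{\eps,h}\equiv 0$ on the handle. Consequently the $B$-contribution present in \eqref{eq_asymp_1} disappears, to be replaced by a new term supported on $\Sigma\setminus B_{2\eps^k}$ arising from the correction $-(\gamma_\eps/\phi_0(x_0))\phi_{0,\eps}$, where $\gamma_\eps := \bigl(\tfrac{2\pi}{h}\bigr)^{3/2}\tfrac{1}{2\pi}(1+e_{\eps,\lambda})\log(1/\eps^k)\eps^{1/2}$ and $\phi_{0,\eps}$ is the cut-off quasimode from \eqref{eq_quasimod_surf}.

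To execute this, I would first rewrite \eqref{eq_diff_quasimod} as the global identity
\begin{equation*}
\chi_{\eps,h} = \tilde\psi_{\eps,h} - (\gamma_\eps/\phi_0(x_0))\phi_{0,\eps} \qquad \text{on } \Sigma_{\eps,h},
\end{equation*}
which is valid because $\phi_{0,\eps}\equiv\phi_0(x_0)$ on $M_{\eps,h}$. A short check using the definition of $e_{\eps,\lambda}$ shows that $\chi_{\eps,h}\in W^{1,2}(\Sigma_{\eps,h})$ and in fact vanishes identically along $\partial B_{\eps^k}=\partial M_{\eps,h}$. Expanding
\begin{equation*}
\lambda_{\eps,h}\int_{\Sigma_{\eps,h}} u_{\eps,h}\chi_{\eps,h} = \int_{\Sigma_{\eps,h}} \nabla u_{\eps,h}\cdot\nabla\chi_{\eps,h}
\end{equation*}
by integration by parts on each piece, the boundary term along $\partial B_{\eps^k}$ is handled exactly as in the proof of \cref{lem_asymp_1}; the only new ingredient is the discrepancy $\gamma_\eps\partial_r(\phi_{0,\eps}/\phi_0(x_0))$ on the $\Sigma$-side, pointwise of size $O(\gamma_\eps)$ and integrating to $O(\eps^{k+1/2}\log^2(1/\eps))$.

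On $M_{\eps,h}$ one has $\Delta\chi_{\eps,h}=\lambda\chi_{\eps,h}$, producing $\lambda\int_{M_{\eps,h}} u_{\eps,h}\chi_{\eps,h}$. On $\Sigma\setminus B_{2\eps^k}$, using \eqref{eq_green_proj_kernel} together with $\phi_{0,\eps}\equiv\phi_0$, a direct computation gives
\begin{equation*}
(\Delta-\lambda)\chi_{\eps,h} = -\bigl(\tfrac{2\pi}{h}\bigr)^{3/2}\tfrac{\lambda}{\lambda_1(\Sigma)}\phi_0(x_0)\phi_0\eps^{1/2} - \tfrac{\gamma_\eps}{\phi_0(x_0)}(\lambda_1(\Sigma)-\lambda)\phi_0.
\end{equation*}
On the annulus $B_{2\eps^k}\setminus B_{\eps^k}$ the $\tilde\psi_{\eps,h}$-piece is estimated verbatim as in the proof of \cref{lem_asymp_1}, while the new $\phi_{0,\eps}$-piece is controlled by combining $|u_{\eps,h}|\leq C\log(1/\eps^k)$ from \cref{max_eigen_neu_ell_est} with the scalings $|\nabla\eta_\eps|\lesssim\eps^{-k}$ and $|\Delta\eta_\eps|\lesssim\eps^{-2k}$; the resulting bound is $O(\eps^{k+1/2}\log^2(1/\eps))$. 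Assembling everything yields
\begin{equation*}
(\lambda_{\eps,h}-\lambda)\int u_{\eps,h}\chi_{\eps,h} = -\bigl(\tfrac{2\pi}{h}\bigr)^{3/2}A - \tfrac{\gamma_\eps(\lambda_1(\Sigma)-\lambda)}{\phi_0(x_0)}\int_{\Sigma\setminus B_{2\eps^k}}\phi_0 u_{\eps,h} + O(\eps^k\log(1/\eps^k)),
\end{equation*}
with $A$ exactly as in \eqref{eq_asymp_1}. Dividing by $\int u_{\eps,h}\chi_{\eps,h}$, which is bounded below by a positive constant via \eqref{eq_sign} together with the trivial perturbation $|\!\int u_{\eps,h}(\tilde\psi_{\eps,h}-\chi_{\eps,h})|\lesssim \gamma_\eps$, produces exactly \eqref{eq_asymp_2}.

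The main obstacle is careful bookkeeping of the error terms: every new annular or boundary estimate introduced by the $\phi_{0,\eps}$-correction is inflated by the factor $\gamma_\eps\sim\eps^{1/2}\log(1/\eps^k)$, and one must confirm that each such product remains strictly below the target $O(\eps^k\log(1/\eps^k))$. Since $k>4$, all additional contributions are of order at most $\eps^{k+1/2}\log^2(1/\eps)$, comfortably absorbed into the advertised error.
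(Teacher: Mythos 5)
Your proposal is correct and takes the same route the paper does: the paper simply says the argument is ``completely analogous to the proof of Lemma~\ref{lem_asymp_1}'' and points out that the second summand arises from $(\Delta-\lambda)(-\phi_0)=-(\lambda_1(\Sigma)-\lambda)\phi_0$, which is precisely the mechanism you identified via the global decomposition $\chi_{\eps,h}=\tilde\psi_{\eps,h}-(\gamma_\eps/\phi_0(x_0))\phi_{0,\eps}$. Your fleshed-out bookkeeping — the vanishing of the $M_{\eps,h}$-term since $\chi_{\eps,h}=\psi_{\eps,h}$ there, the appearance of the $(\lambda_1(\Sigma)-\lambda)$-term from the $\phi_0$-correction on $\Sigma\setminus B_{2\eps^k}$, the $O(\eps^{k+1/2}\log^2(1/\eps))$ bounds for the new annular and boundary contributions, and the lower bound on the denominator via $|\int u_{\eps,h}(\tilde\psi_{\eps,h}-\chi_{\eps,h})|\lesssim\gamma_\eps$ — all check out and supply details the paper leaves implicit.
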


\begin{proof}
This is completely analogous to the proof of \cref{lem_asymp_1}.
The form of the second summand in the enumerator stems from the fact that
\begin{equation*}
(\Delta-\lambda)(-\phi_0) = -(\lambda_1(\Sigma)-\lambda) \phi_0.
\qedhere
\end{equation*}
\end{proof}

\subsubsection{Construction of the quasimodes for cylinders.}

 The construction of the quasimodes  on the cylinder is almost completely analogous to that on the cross cap.
 We have to use the kernel $J_\lambda$ that has poles at both, $x_0$ and $x_1$ in this case.
 
Again, we denote by $\psi_{\eps,h}$ a normalized $\lambda_0(C_{\eps,h})$-eigenfunction.
Recall that
 $$
 \frac{1}{2} \int_{\partial C_{\eps,h}} \partial_\nu \psi_{\eps,h} d \mathcal{H}^1
 =
 - 2 \left( \frac{h}{\pi} \right)^{3/2} \eps^{1/2},
 $$
corresponds to integrating the normal derivative along each of the two connected components of $\partial C_{\eps,h}$.
 
Let $\rho_h \colon [0,h] \to [0,1]$ be a smooth function such that $\rho_h(0)=1, \rho_h(h)=0$ and $|\rho_h'| \leq 2/h$.
We define the quasimode $\tilde \psi_{\eps,h}$ as follows:
For $y \in C_{\eps,h}$ we take
\begin{equation*}
\begin{split}
\tilde \psi_{\eps,h}(y)
= &
\psi_{\eps,h}(y) + 
2 \left( \frac{h}{\pi} \right)^{3/2} \eps^{1/2}
\rho_h 
((\frac{1}{2\pi} \log(1/\eps^k)+e_{\lambda,0}(x_0)+H_{\lambda,1}(x_0)) 
\\
&+
2 \left( \frac{h}{\pi} \right)^{3/2} \eps^{1/2}(1-\rho_h)(\frac{1}{2\pi} \log(1/\eps^k)+e_{\lambda,1}(x_1)+H_{\lambda,0}(x_1)).
\end{split}
\end{equation*}
If $y \in \Sigma \setminus (B_{2\eps^k} (x_1) \cup B_{\eps^k} (x_0))$, we put
$$
\tilde \psi_{\eps,h}(y)
=
2 \left( \frac{h}{\pi} \right)^{3/2} \eps^{1/2} \left(\eta_\eps J_\lambda(y) + (1-\eta_\eps)   \left(\frac{1}{2\pi}\log \left( \frac{1}{|x_0-y|} \right) + e_{\lambda,0}(x_0) + H_{\lambda,1}(x_0) \right) \right).
$$
In the remaining case that $y \in B_{2 \eps^k} (x_1)$ we define
$$
\tilde \psi_{\eps,h}(y)
=
2 \left( \frac{h}{\pi} \right)^{3/2} \eps^{1/2} \left(\eta_\eps J_\lambda(y) + (1-\eta_\eps)   \left(\frac{1}{2\pi}\log \left( \frac{1}{|x_1-y|} \right) + e_{\lambda,1}(x_1) + H_{\lambda,0}(x_1) \right) \right) 
$$
 
As before, let $u_{\eps,h}$ be an eigenfunctions with eigenvalue $\lambda_{\eps,h} \in [\delta_0,\lambda_{K+1}(\Sigma)-\delta_0]$ as in \eqref{eq_assum_conc_cyl} above (with $M_{\eps,h}$ replaced by $C_{\eps,h}$).
Then we have thanks to \cref{conv} and our choice of $h_0$ at \eqref{eq_choice_h_0} that
$$
\left| \int_{\Sigma_{\eps,h}} \tilde \psi_{\eps,h} u_{\eps,h} \right| \geq c_1
$$ 
for $\eps$ sufficiently small.

Thanks to \eqref{eq_kernel_handle} the arguments from \cref{sub_sec_quasi_cross} along with some minor modifications give the following.

\begin{lemma} \label{lem_asymp_cyl}
Let $\tilde \psi_{\eps,h}$  be the quasimode defined above.
Then the eigenvalue $\lambda_{\eps,h}$ has the asymptotic expansion
\begin{equation} \label{eq_asymp_1}
\begin{split}
\lambda_{\eps,h}
&= 
\lambda  - 2 \left( \frac{h}{\pi} \right)^{3/2} \frac{\frac{\lambda}{\lambda_1(\Sigma)} (\phi_0(x_0)+\phi_0(x_1) )\int_{\Sigma \setminus B_{2\eps^k}} \phi_0 u_{\eps,h} \eps^{1/2}}{  \int_{\Sigma_{\eps,h}}  u_{\eps,h} \tilde \psi_{\eps,h}}
\\
&+ O(\eps\log(1/\eps^k))
\end{split}
\end{equation}
as $\eps \to 0$, uniformly in $h \in [h_0,h_1]$ as long as $u_{\eps,h}$ satisfies \eqref{eq_assum_conc_cyl} with $c_0>0$ fixed.
\end{lemma}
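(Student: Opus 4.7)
The proof will follow the same structure as the proof of \cref{lem_asymp_1}, with the main modifications coming from the fact that $\tilde\psi_{\eps,h}$ now contains contributions from two poles (at $x_0$ and $x_1$) and an interpolating function $\rho_h$ on the cylinder. The starting point is the identity
\begin{equation*}
\lambda_{\eps,h}\int_{\Sigma_{\eps,h}} u_{\eps,h}\tilde\psi_{\eps,h}
=
\int_{\Sigma_{\eps,h}} \nabla u_{\eps,h}\cdot\nabla\tilde\psi_{\eps,h},
\end{equation*}
which after integration by parts on the two pieces $\Sigma\setminus(B_{\eps^k}(x_0)\cup B_{\eps^k}(x_1))$ and $C_{\eps,h}$ yields a sum of bulk integrals $\int u_{\eps,h}(\Delta-\lambda)\tilde\psi_{\eps,h}$ plus boundary terms along the two components of $\partial B_{\eps^k}$. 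The key point in the construction of $\tilde\psi_{\eps,h}$ is that, on each component separately, the normal derivative of the Green's function piece is engineered by the factor $2(h/\pi)^{3/2}\eps^{1/2}$ to cancel that of $\psi_{\eps,h}$, exactly as in \cref{lem_asymp_1}; the discrepancy between the induced metric and the Euclidean metric on $\partial B_{\eps^k}$ contributes $O(\eps^k)$ by the same estimate using $|u_{\eps,h}|\le C\log(1/\eps^k)$ from \cref{max_eigen_neu_ell_est}.

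Next I would compute $(\Delta-\lambda)\tilde\psi_{\eps,h}$ on each region. On $\Sigma\setminus(B_{2\eps^k}(x_0)\cup B_{2\eps^k}(x_1))$ the cut-off $\eta_\eps$ is identically $1$, so \eqref{eq_kernel_handle} yields
\begin{equation*}
(\Delta-\lambda)\tilde\psi_{\eps,h}
=
-2\Bigl(\tfrac{h}{\pi}\Bigr)^{3/2}\tfrac{\lambda}{\lambda_1(\Sigma)}(\phi_0(x_0)+\phi_0(x_1))\phi_0\,\eps^{1/2},
\end{equation*}
which is exactly the leading term appearing in the statement. On the two annuli $B_{2\eps^k}\setminus B_{\eps^k}$ the estimates from the proof of \cref{lem_asymp_1} apply verbatim (using $e_{\lambda,i}\in C^{1,\alpha}$, the explicit sizes of $\nabla\eta_\eps$ and $\Delta\eta_\eps$, and the pointwise bounds from \cref{max_eigen_neu_ell_est}) and give a contribution of $O(\eps^k)$ — strictly smaller than the claimed error.

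The genuinely new ingredient is the bulk estimate on $C_{\eps,h}$. Here $\psi_{\eps,h}$ is itself a Dirichlet eigenfunction with eigenvalue $\lambda$, so the only contribution to $(\Delta-\lambda)\tilde\psi_{\eps,h}$ comes from the interpolating correction
\begin{equation*}
2\Bigl(\tfrac{h}{\pi}\Bigr)^{3/2}\eps^{1/2}\bigl[\rho_h\,A_0+(1-\rho_h)\,A_1\bigr],
\end{equation*}
where $A_0,A_1$ are the $O(\log(1/\eps^k))$ constants appearing in the definition of $\tilde\psi_{\eps,h}$. Applying $\Delta-\lambda$ produces terms of two types: derivatives of $\rho_h$, which are bounded uniformly in $h\in[h_0,h_1]$, and the zeroth-order $\lambda\rho_h A_i$ term. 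In both cases, after testing against $u_{\eps,h}$, Hölder's inequality combined with $\area(C_{\eps,h})=O(\eps)$ gives $|\int_{C_{\eps,h}}u_{\eps,h}|\le C\eps^{1/2}$, so each such term is bounded by $C\eps^{1/2}\cdot\eps^{1/2}\cdot\log(1/\eps^k)=O(\eps\log(1/\eps^k))$. This is the main obstacle, and it is precisely what dictates the weaker error term $O(\eps\log(1/\eps^k))$ here compared to $O(\eps^k\log(1/\eps^k))$ in \cref{lem_asymp_1}.

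Combining the three contributions and dividing by $\int_{\Sigma_{\eps,h}} u_{\eps,h}\tilde\psi_{\eps,h}$, which is bounded below by $c_1>0$ by the remark preceding the statement, produces the claimed expansion. Uniformity in $h\in[h_0,h_1]$ is automatic because all constants (pointwise bounds, $C^{1,\alpha}$ norms of $e_{\lambda,i}$, bounds on $\rho_h$ and its derivatives, the lower bound $c_1$) depend on $h_0,h_1$ only, in the range where $\lambda=\lambda_0(C_{\eps,h})$ stays uniformly inside $[\delta_0,\lambda_{K+1}(\Sigma)-\delta_0]$ by our choice \eqref{eq_choice_h_0}.
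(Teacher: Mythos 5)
Your proof follows exactly the approach the paper intends, which (for this lemma) is only sketched: the paper's ``proof'' is the single sentence that the arguments from \cref{sub_sec_quasi_cross} apply with minor modifications thanks to \eqref{eq_kernel_handle}, and your proposal spells out precisely those modifications, correctly identifying the cylinder interpolation term as the source of the enlarged $O(\eps\log(1/\eps^k))$ error. The only small point worth adding is that when you differentiate the interpolating correction $\rho_h A_0+(1-\rho_h)A_1$ you pick up a $\rho_h''$ term (not just $\rho_h'$), so one should observe that $\rho_h$ can be chosen with $|\rho_h''|\leq C/h^2$ (uniform on $[h_0,h_1]$); moreover the $\log(1/\eps^k)$ factor cancels in $A_0-A_1$, so the $\rho_h''$ contribution is in fact only $O(\eps)$ and the dominant part of the cylinder correction is the zeroth-order $\lambda\rho_h A_i$ piece, exactly as you say.
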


The term of order $\eps \log(1/\eps)$ looks essentially the same as for cross caps.
However, we do not need it in order to prove the second item of \cref{main_1}.

Note that under the symmetry assumption \eqref{eq_assum_key_cancel} the first term on the right hand side vanishes.
Since the second summand is of order $\eps \log(1/\eps)$ thanks to H{\"o}lder's inequality, we can locate the corresponding eigenvalue up to scale $\eps \log(1/\eps)$.
Besides  \cref{mono_quasimode_2} this is the second crucial ingredient to obtain the second part of \cref{main_1}.

\section{Proofs of main results}\label{sec5}

\subsection{Surfaces with symmetries}\label{proofmain1}

In this section we prove \cref{main_1}. 
The first part is straighforward using the convergence result for the spectrum \cref{conv} and
the quasimodes from \cref{mono_quasimode_1}.
The second part is more subtle and requires a careful choice of the height parameter $h$ adjusted to the radius $\eps$ in order to keep the branch corresponding to $\lambda_0(C_{\eps,h})$ not too much below $\lambda_1(\Sigma)$, while simultaneously having a good quasimode in \cref{mono_quasimode_2}.

\begin{proof}[Proof of \cref{main_1} (i)]
Recall that $h_0>0$ is chosen such that 
$$
\lambda_0(M_{\eps,h_0}) > \lambda_1(\Sigma).
$$
Therefore, by \cref{conv}, for $\eps$ sufficiently small, we can find some fixed small $\delta>0$
such that the first $K$
non-trivial eigenvalues of $\Sigma_{\eps,h}$ are contained in the interval $[\lambda_1(\Sigma)-\delta,\lambda_1(\Sigma)+\delta]$
and
$\lambda_{K+1}(\Sigma_{\eps,h}) \geq \lambda_1(\Sigma) + 2\delta$.
On the other hand, if we take an orthonormal basis $(\phi_0, \dots, \phi_{K-1})$ of $\lambda_1(\Sigma)$-eigenfunctions, we have for the quasimodes 
given by \eqref{eq_quasimod_surf} that
$$
\left |\int_{\Sigma_{\eps,h}} (\phi_i)_\eps (\phi_j)_\eps - \delta_{ij} \right|
\leq C \eps^{2k}.
$$
Therefore, it easily follows from \cref{lem_anne_quasimod} applied to the quasimodes from \cref{mono_quasimode_1} that there are at least $K$
 eigenvalues in $[\lambda_1(\Sigma) - \eps^{k/4} , \lambda_1(\Sigma) + \eps^{k/4}]$ for $\eps$ sufficiently small (cf.\ \cref{rem_anne_quasimod}).
Clearly, this implies that we need to have
$$
\lambda_1(\Sigma_{\eps,h_0}) \geq \lambda_1(\Sigma) - \eps^{k/4}
$$
for $\eps$ sufficiently small.
If we combine this with the area bound
$$
\area(\Sigma_{\eps,h_0})  \geq \area(\Sigma) + 2\pi h_0 \eps + O(\eps^{2k})
$$
we immediately obtain that
\begin{align*}
\lambda_1(\Sigma_{\eps,h_0}) \area(\Sigma_{\eps,h_0}) 
&\geq
\lambda_1(\Sigma) \area(\Sigma) + 2 \pi h_0 \lambda_1(\Sigma) \eps - O(\eps^{k/4})
\\
&> 
\lambda_1(\Sigma)\area(\Sigma)
\end{align*}
for $\eps$ sufficiently small since $k>4$.
\end{proof}

\begin{proof}[Proof of \cref{main_1} (ii)]
Recall once again that $h_*>0$ is chosen such that
$$
\lambda_1(\Sigma)=\lambda_0(C_{\eps,h_*}).
$$
We define $h_\eps>0$ by requiring that 
\begin{equation*}
\frac{ \pi^2}{ h_\eps^2}=  \frac{ \pi^2}{ h_*^2}+ \eps^{3/4}.
\end{equation*}
Let $k>4$ and define $\Sigma_\eps=\Sigma_{\eps,h_\eps}$.
We now take an orthonormal basis $(\phi_0,\dots, \phi_{K-1})$ of $\lambda_1(\Sigma)$-eigenfunctions with the property that $\phi_1(x_i)=\dots = \phi_{K-1}(x_i)=0$, which we can always do for dimensional reasons. 
Let us denote by $(\phi_i)_\eps$ the quasimode associated to $\phi_i$ constructed before 
\cref{mono_quasimode_2}.
We then have that
\begin{equation*}
\left| \int_{\Sigma_{\eps,h}} (\phi_i)_\eps (\phi_j)_\eps -\delta_{ij} \, \right| \leq C \eps^{2k}.
\end{equation*}
Therefore it follows from \cref{lem_anne_quasimod} and \cref{mono_quasimode_2} that there are at least $K$ eigenvalues
in $[\lambda_1(\Sigma)-C\eps^{5/4}, \lambda_1(\Sigma) + C \eps^{5/4}]$ (cf.\ \cref{rem_anne_quasimod}).
Thanks to our choice of $h_\eps$, we know from \cref{conv} that there are exactly $K+1$ eigenvalues contained in $[\lambda_1(\Sigma)-\delta,\lambda_1(\Sigma)+\delta]$ 
for some fixed small $\delta>0$ and $\eps$ sufficiently small.
At this stage, we have located $K$ of these within the interval $[\lambda_1(\Sigma)-C \eps^{5/4},\lambda_1(\Sigma)+C\eps^{5/4}]$.

We now prove the following\footnote{Alternatively, we could assume that the remaining eigenvalue is not contained in $[\lambda_1(\Sigma)-C \eps^{5/4},\lambda_1(\Sigma)+C\eps^{5/4}]$ (since we would be finished otherwise) and obtain the same conclusion directly from \cref{mono_quasimode_2} and \cref{conv}.
However, we need the more general statement here in the proof of \cref{main_2}.}

\begin{claim} \label{claim_conc}
There is $c_0>0$ such that for any $\eps$ sufficiently small there is a normalized eigenfunction $v_{\eps,h}$ with eigenvalue $\lambda_{\eps,h} \in [\delta,\lambda_0(C_{\eps,h_0})+\delta]$ with
$$
\int_{C_{\eps,h}} |v_{\eps,h}|^2 \geq c_0.
$$
\end{claim}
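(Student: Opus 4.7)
The plan is to argue by contradiction, leveraging \cref{conv} together with a dimension count on the space of $\lambda_1(\Sigma)$-eigenfunctions. By the convergence of the spectrum and the choice of $h_\eps$ (so that $\lambda_0(C_{\eps,h_\eps}) \to \lambda_1(\Sigma)$), for $\eps$ sufficiently small there are exactly $K+1$ eigenvalues of $\Sigma_{\eps,h_\eps}$ in the interval $[\lambda_1(\Sigma) - \delta, \lambda_1(\Sigma)+\delta]$. Since $\lambda_1(\Sigma) < \lambda_0(C_{\eps,h_0})$ and $\delta$ can be chosen less than $\lambda_1(\Sigma)/2$, this interval lies in $[\delta, \lambda_0(C_{\eps,h_0})+\delta]$. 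Let $v_1,\dots,v_{K+1}$ denote an $L^2$-orthonormal basis of the corresponding eigenspace, and set $b_j^2 := \int_{C_{\eps,h_\eps}} |v_j|^2$.

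For each $j$, part (2) of \cref{conv} provides a scalar $d_j = \int_{C_{\eps,h_\eps}} v_j\, \psi_{\eps,h_\eps}^{(C)}$ (with $\psi_{\eps,h_\eps}^{(C)}$ the normalized first Dirichlet eigenfunction on $C_{\eps,h_\eps}$) such that $\|v_j - d_j \psi_{\eps,h_\eps}^{(C)}\|_{L^2(C_{\eps,h_\eps})}^2 = O(\eps \log(1/\eps))$. Cauchy--Schwarz then gives $\int_{C_{\eps,h_\eps}} v_i v_j = d_i d_j + o(1)$ and in particular $b_j^2 = d_j^2 + o(1)$. Combined with the $L^2$-orthonormality of the $v_j$'s on $\Sigma_{\eps,h_\eps}$, this yields
\begin{equation*}
\int_{\Sigma \setminus B_{\eps^k}} v_i v_j \;=\; \delta_{ij} - d_i d_j + o(1).
\end{equation*}

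Passing to a subsequence and using part (1) of \cref{conv}, the harmonic extensions $\widetilde{v_j|_{\Sigma \setminus B_{\eps^k}}}$ converge in $L^2(\Sigma)$ to functions $\phi_j$, each of which is either zero or a $\lambda_1(\Sigma)$-eigenfunction. Since the harmonic extension of $v_j|_{\partial B_{\eps^k}}$ to $B_{\eps^k}$ has $L^2$-norm at most $C\eps^{1/2}\log(1/\eps^k)$ by the pointwise bound \cref{max_eigen_neu_ell_est} and the maximum principle, the Gram matrix of the $\phi_j$'s in $L^2(\Sigma)$ equals $\lim_{\eps \to 0} (I - d d^T) = I - d_\infty d_\infty^T$, whose determinant is $1 - |d_\infty|^2$. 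But $\phi_1,\dots,\phi_{K+1}$ all lie in the $K$-dimensional $\lambda_1(\Sigma)$-eigenspace, so their Gram matrix is singular, forcing $|d_\infty|^2 = 1$, i.e.\ $\sum_j b_j^2 \to 1$. By pigeonhole, some $b_j^2 \geq 1/(K+1) + o(1)$, so $c_0 := 1/(2(K+1))$ works.

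The only delicate point I anticipate is ensuring that the error bound from \cref{conv} (2), although uniform in $h \in [h_0,h_1]$, is strong enough to let $\int_{C_{\eps,h_\eps}} v_i v_j \to d_i d_j$ in the limit; this is immediate from Cauchy--Schwarz and the $O(\eps \log(1/\eps))$ rate. The dimension count in the last step is the conceptual heart of the argument: the $K+1$ eigenfunctions cannot all concentrate on $\Sigma$, because the $\lambda_1(\Sigma)$-eigenspace is only $K$-dimensional.
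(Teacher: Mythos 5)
Your proof is correct and rests on the same key idea as the paper's: $K+1$ orthonormal eigenfunctions cannot all have vanishing $L^2$-mass on the collapsing cylinder, because their restrictions to $\Sigma$ would then converge to $K+1$ orthonormal elements of the $K$-dimensional $\lambda_1(\Sigma)$-eigenspace. The paper runs this as a short soft contradiction, while your Gram-matrix computation ($\mathrm{Gram}(\phi_j) = I - d_\infty d_\infty^T$, singular, hence $|d_\infty|^2=1$) is a more quantitative rendering of the same dimension count, which also yields an explicit $c_0 = 1/(2(K+1))$ and the stronger conclusion $\sum_j \|v_j\|_{L^2(C_{\eps,h})}^2 \to 1$.
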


\begin{proof}[Proof of \cref{claim_conc}]
It follows from \cref{conv} that there are exactly $K+1$ normalized, orthogonal eigenfunctions $v_{\eps,h}^{(i)}$ with eigenvalues in $ \lambda_{\eps,h}^{(i)} \in [\delta,\lambda_0(C_{\eps,h_0})+\delta]$.
We argue by contradiction and assume
$$
\lim_{\eps \to 0} \int_{C_{\eps,h}} |v_{\eps,h}^{(i)}|^2=0
$$
for any $i=1,\dots,K+1$.
Therefore, \cref{conv} implies that the harmonic extensions $\widetilde{\left. v_{\eps,h}^{(i)} \right|_{\Sigma}}$ converge to $K+1$ orthonormal  $\lambda_1(\Sigma)$-eigenfunctions.
But the multiplicity of $\lambda_1(\Sigma)$ is only $K$, a contradiction.
\end{proof}

Invoking \cref{conv} we find from \cref{claim_conc} that
$$
\left| \int_{C_{\eps,h}} v_{\eps,h} \psi_{\eps,h}  \right| \geq c_1
$$
for $\eps$ sufficiently small and some uniform constant $c_1>0$.

We can now apply \cref{lem_asymp_cyl}
to find for the corresponding eigenvalue that
\begin{equation*}
\lambda_\eps \geq \frac{ \pi^2}{h_\eps^2} - C \eps \log(1/\eps)
\geq \lambda_1(\Sigma)  + \left| \frac{ \pi^2}{h_\eps^2} - \frac{ \pi^2}{h_*^2}\right| - C \eps \log(1/\eps)
\geq \lambda_1(\Sigma) +  \eps^{3/4}/2
\end{equation*}
for $\eps$ sufficiently small.
In particular, since we have located all the relevant eigenvalues, we can conclude that
\begin{equation*}
\lambda_1(\Sigma_\eps) \geq \lambda_1(\Sigma)-C\eps^{5/4}.
\end{equation*}
The assertion follows now exactly as in the proof of the first part.
\end{proof}

\subsection{Surfaces without symmetries}

We now give the proof of \cref{main_2} using the two quasimodes contructed in \cref{sec_quasi_handle}.

\begin{proof}[Proof of \cref{main_2}]
Let us start with some general considerations that apply to any $h \in [h_0,h_1]$.
By \cref{claim_conc} we can find a normalized eigenfunction $u_{\eps,h}$ with eigenvalue $\lambda_{\eps,h} \in [\delta, \lambda_0(M_{\eps,h_0})-\delta]$ for some small fixed $\delta>0$ such that
\begin{equation} \label{eq_mass_conc}
\int_{M_{\eps,h}} |u_{\eps,h}|^2 \geq c_0.
\end{equation}
for some $c_0>0$.
Thanks to the last part of \cref{conv}, up to multiplying $u_{\eps,h}$ by $-1$, we may therefore assume that
\begin{equation} \label{eq_sign}
\int_{M_{\eps,h}} \psi_{\eps,h} u_{\eps,h} \geq c_1
\end{equation}
for some uniform $c_1=c_1(c_0,h_0,h_1)>0$.
We now want to use the asymptotic expansions \cref{lem_asymp_1} and \cref{lem_asymp_2} both applied to the 
eigenvalue $\lambda_{\eps,h}$.

\smallskip

To simplify notation, let us define
\begin{equation} \label{eq_def_beta}
\beta_{\eps,h}:=
\frac{\int_{\Sigma_{\eps,h}}u_{\eps,h}(\tilde \psi_{\eps,h}-\chi_{\eps,h})}{\int_{\Sigma_{\eps,h}}u_{\eps,h}\chi_{\eps,h}}.
\end{equation}
Observe that the assumption \eqref{eq_sign} implies that the denominator of the fraction is bounded away from zero for $\eps$ sufficiently small.
Moreover, recalling \eqref{eq_diff_quasimod}, we have that
\begin{equation} \label{eq_comp_quasimod}
\begin{aligned}
\int_{\Sigma_{\eps,h}} & u_{\eps,h}(\tilde \psi_{\eps,h}-\chi_{\eps,h})=
\\
&=
\frac{1+e_{\eps,\lambda}}{2 \pi} \left(\frac{2 \pi}{h}\right)^{3/2} \log\left({1}/{\eps^k}\right) \eps^{1/2}
\left(
 \int_{M_{\eps,h}} u_{\eps,h}
+
 \int_{\Sigma \setminus B_{\eps^k}} \frac{\phi_{0,\eps}}{\phi_0(x_0)} u_{\eps,h}
\right)
\\
&=
\frac{1+e_{\eps,\lambda}}{2 \pi} \left(\frac{2 \pi}{h}\right)^{3/2} 
\left( 
\left(
\int_{\Sigma \setminus  B_{2 \eps^k}} \frac{\phi_0}{\phi_0(x_0)} u_{\eps,h} 
\right)
\eps^{1/2} \log(1/\eps^k)
+ 
O(\eps \log(1/\eps^k))
\right)
\end{aligned}
\end{equation}
as $\eps \to 0$ by \cref{rem_order}.
In particular, we find that
\begin{equation} \label{eq_order_beta}
\beta_{\eps,h}=O(\eps^{1/2}\log(1/\eps^k))
\end{equation}
as long as \eqref{eq_mass_conc} holds.
Note that
$$
\int_{\Sigma_{\eps,h}} u_{\eps,h} \tilde \psi_{\eps,h}
=
(1+\beta_{\eps,h})
\int_{\Sigma_{\eps,h}} u_{\eps,h} \chi_{\eps,h}.
$$
 By comparing the asymptotic expansions obtained from \cref{lem_asymp_1} and \cref{lem_asymp_2} respectively, we find that

  \begin{multline*}
  \frac{\lambda}{\lambda_1(\Sigma)} \int_{\Sigma \setminus B_{2\eps^k}} \phi_0(x_0) \phi_0 u_{\eps,h} \eps^{1/2}
   +\frac{\lambda}{2\pi} (1+e_{\eps,\lambda}) \int_{M_{\eps,h}} u_{\eps,h} \log(1/\eps^k)\eps^{1/2}  
 + O(\eps^k \log(1/\eps))
 \\
 =
 (1+\beta_{\eps,h})
   \frac{\lambda}{\lambda_1(\Sigma)} \int_{\Sigma \setminus B_{2\eps^k}} \phi_0(x_0) \phi_0 u_{\eps,h} \eps^{1/2}
  \\ 
  +
   (1+\beta_{\eps,h})
   \frac{\lambda_1(\Sigma)-\lambda}{2\pi} (1+e_{\eps,\lambda}) \int_{\Sigma \setminus B_{2\eps^k}} \frac{\phi_0}{\phi_0(x_0)}u_{\eps,h} \eps^{1/2}\log(1/\eps^k)
   \end{multline*}
  Thanks to \eqref{eq_order_beta} this implies that
   
  \begin{multline} \label{eq_leading_order}
  \frac{1+e_{\eps,\lambda}}{2\pi} \left( \int_{M_{\eps,h}} u_{\eps,h}\right) \log(1/\eps^k)\eps^{1/2}  
  -
 \frac{\beta_{\eps,h}}{\lambda_1(\Sigma)} \left(\int_{\Sigma \setminus B_{2\eps^k}} \phi_0(x_0) \phi_0 u_{\eps,h}\right) \eps^{1/2}
 =
 \\
\frac{\lambda_1(\Sigma)-\lambda}{2\pi \lambda} (1+e_{\eps,\lambda}) \int_{\Sigma \setminus B_{2 \eps^k}} \frac{\phi_0}{\phi_0(x_0)}u_{\eps,h} \eps^{1/2} \log(1/\eps^k)\\
 -
  |\lambda_1-\lambda|O(\eps \log^2(1/\eps^k)) + O(\eps^k \log(1/\eps)).
 \end{multline}

We now write
 \begin{equation*} \label{eq_mass_quanti}
 \int_{M_{\eps,h}} u_{\eps,h} \psi_{\eps,h} = n_{\eps,h}
  \end{equation*}
  and
  \begin{equation*}
  \int_{\Sigma \setminus B_{2 \eps^k}} u_{\eps,h} \phi_0 =m_{\eps,h}
  \end{equation*}
  for some $n_{\eps,h} \in [c_1,1)$ and $m_{\eps,h} \in (-1,1)$ thanks to \eqref{eq_sign}.
We use corresponding notation also for other eigenfunctions explicitly indicating the eigenfunction whenever necessary.

\smallskip
 
With this notation, using \eqref{eq_diff_quasimod} we then find from \eqref{eq_comp_quasimod} that
\begin{align*}
\frac{\beta_{\eps,h}}{\lambda_1(\Sigma)} &\int_{\Sigma \setminus B_{2\eps^k}} \phi_0(x_0)\phi_0 u_{\eps,h} \eps^{1/2} \\
&=
\frac{1}{2 \pi \lambda_1(\Sigma)} \left( \frac{2\pi}{h} \right)^{3/2}  \frac{(1+e_{\eps,\lambda})(m_{\eps,h}^2 + O(\eps^{3/2}\log(1/\eps)) }{n_{\eps,h}} \eps \log(1/\eps^k) .
\end{align*}
Moreover, we have from the last part of \cref{conv} that
\begin{equation*}
\int_{M_\eps,h} \left| u_{\eps,h} - n_{\eps,h} \psi_{\eps,h} \right|^2 \leq C \eps \log(1/\eps),
\end{equation*}
which combined with \eqref{eq_l1_norm} and H{\"o}lder's inequality implies that
\begin{equation} \label{eq_approx_l1} 
\begin{split}
 \int_{M_{\eps,h}} u_{\eps,h}
 &= 
 \int_{M_{\eps,h}} n_{\eps,h} \psi_{\eps,h} + \int_{M_{\eps,h}} (u_{\eps,h} - n_{\eps,h} \psi_{\eps,h})
 \\
 &=
 4 \left(\frac{h}{2\pi}\right)^{1/2}n_{\eps,h} \eps^{1/2}+O(\eps \log^{1/2}(1/\eps)).
 \end{split}
\end{equation}
Therefore, for $u_{\eps,h}$, \eqref{eq_leading_order} can be written as
\begin{equation} \label{eq_u_final}
\begin{split}
& \left( 
\frac{1}{2\pi \lambda_1(\Sigma)} \left( \frac{2\pi}{h} \right)^{3/2} \frac{m_{\eps,h}^2}{n_{\eps,h}} -
\frac{4}{2\pi} \left( \frac{h}{2\pi} \right)^{1/2} n_{\eps,h} 
 \right)
 \eps \log(1/\eps^k)
 \\
 &=
- \frac{\lambda_1(\Sigma)-\lambda}{2 \pi \lambda}  \frac{m_{\eps,h}}{\phi_0(x_0)}   \eps^{1/2}\log(1/\eps^k) + |\lambda - \lambda_1(\Sigma)|O(\eps \log^2(1/\eps^k)) + O(\eps^2 \log^2(1/\eps)).
  \end{split}
\end{equation}

Since by assumption $n_{\eps,h} \geq c_1$, we obtain after dividing by $n_{\eps,h}$, and considering the leading order term after some easy simplifications that 
\begin{equation} \label{eq_dis_final}
\frac{m_{\eps,h}^2}{n_{\eps,h}^2} = f_\eps^2(h) + O(\eps^{3/2}\log(1/\eps)),
\end{equation}
where $f_\eps$ is a solution to \eqref{eq_def_f}. 

Without further specifying our choice of the parameter $h$ and of the eigenfunction beyond \eqref{eq_mass_conc} this is all we can conclude.
Let us now fix some $D>0$ and consider $h \in [h_*-D\eps^{1/2},h_*+D \eps^{1/2}]$, so that 
$$
|\lambda_0(M_{\eps,h}) - \lambda_1(\Sigma)| \leq D' \eps^{1/2}
$$
for some other constant $D'>0$.
Under these assumptions it follows from \eqref{eq_u_final}
that 
\begin{equation} \label{eq_mass_u_sigma}
m_{\eps,h}^2 \geq d_0^2
\end{equation}
for some constant $d_0>0$ since if $m_{\eps,h}$ were of size $o(1)$, the second term on the left hand side of \eqref{eq_u_final} would be the only one left of size comparable to $\eps \log(1/\eps)$.
Since $m_{\eps,h}^2 + n_{\eps,h}^2 \leq \| u_{\eps,h}\|_{L^2(\Sigma_{\eps,h})}^2=1$,
this in turn implies that 
$$
n_{\eps,h}^2 \leq 1-d_0^2.
$$
Therefore, by an argument identical to \cref{claim_conc}\footnote{One could also use \cref{rem_anne_quasimod}},
there has to be another eigenfunction $v_{\eps,h}$ with $\|v_{\eps,h}\|_{L^2(M_{\eps,h})} \geq c_2$ for some $c_2>0$ and $\eps$ sufficiently small.
By \cref{conv} this implies that
\begin{equation} \label{eq_mass_v_cross_cap}
\left|\int_{M_{\eps,h}} v_{\eps,h} \psi_{\eps,h} \right| \geq c_3
\end{equation}
for some $c_3>0$ and $\eps$ sufficiently small.
As above, this implies that also
\begin{equation} \label{eq_mass_v_sigma}
\left|\int_{\Sigma \setminus B_{2 \eps^k}} v_{\eps,h} \phi_0 \right| \geq d_1.
\end{equation}
for some $d_1>0$ and $\eps$ sufficiently small.
Of course, the arguments leading to \eqref{eq_u_final} and thus to \eqref{eq_dis_final} also apply to $v_{\eps,h}$.
We now decompose the quasimode $\phi_{0,\eps}:=(\phi_0)_{\eps}$ (constructed before \cref{mono_quasimode_1}) into eigenfunctions,
\begin{equation} \label{eq_decomp}
\phi_{0,\eps} = 
(m_{\eps,h}(u_{\eps,h}) +O(\eps^{1/2}))u_{\eps,h} 
+ 
(m_{\eps,h}(v_{\eps,h})+O(\eps^{1/2}))v_{\eps,h}
+
\sum_{i=1}^{K-1} \alpha_{\eps,h}^i u_{\eps,h}^i 
+ 
r_{\eps,h},
\end{equation}
where we denote by $u_{\eps,h}^i$ those of the first $K+1$ non-trivial eigenfunctions of $\Sigma_{\eps,h}$ that do not correspond to $u_{\eps,h}$ or $v_{\eps,h}$, real numbers $\alpha_{\eps,h}^i \in [-2,2]$, and by $r_{\eps,h}$ the spectral projection of $\phi_{0,\eps}$ to $\{0\} \cup [\lambda_{K+2}(\Sigma_{\eps,h}),\infty)$.
Integrating \eqref{eq_decomp} against $\psi_{\eps,h}$ gives
\begin{equation*}
\begin{split}
m(u_{\eps,h})n(u_{\eps,h}) + m(v_{\eps,h})n(v_{\eps,h})
&=
\int_{\Sigma_{\eps,h}} \phi_{0,\eps} \psi_{\eps,h} 
- 
\sum_{i=1}^{K-1} \alpha_{\eps,h}^i \int_{\Sigma_{\eps,h}} u_{\eps,h}^i \psi_{\eps,h} 
\\
& \ \ - 
\int_{\Sigma_{\eps,h}} r_{\eps,h} \psi_{\eps,h}
+ 
O(\eps^{1/2}),
\end{split}
\end{equation*}
thanks to H{\"o}lder's inequality.
We now estimate the right hand side term by term.
The first term is easily handled by
$$
\int_{\Sigma_{\eps,h}} |\phi_{0,\eps} \psi_{\eps,h}|
=
|\phi_0(x_0)| \int_{M_{\eps,h}} |\psi_{\eps,h}|
\leq
C \eps^{1/2}.
$$
Thanks to \cref{conv} we know that $\lambda_{K+2}(\Sigma_{\eps,h}) \geq \lambda_{K+1}(\Sigma_{\eps,h}) + s_0$
uniformly in $h \in [h_0,h_1]$ for $\eps$ sufficiently small and some $s_0>0$.
Therefore, it follows from \cref{lem_anne_quasimod} and \cref{mono_quasimode_1} that
$$
\int_{\Sigma_{\eps,h}} |r_{\eps,h}|^2 \leq C \eps,
$$
which implies
$$
\left| \int_{\Sigma_{\eps,h}} r_{\eps,h} \psi_{\eps,h} \right|
\leq C \eps^{1/2}.
$$
Before we can estimate the third term we have to observe that we can apply the asymptotic expansion from \cref{lem_asymp_1} to $u_{\eps,h}$ to find that
$$
|\lambda_{\eps,h}-\lambda_1(\Sigma)| \geq s_1 \eps^{1/2}
$$
for some $s_1>0$
thanks to \eqref{eq_sign}, \eqref{eq_approx_l1}, and \eqref{eq_mass_u_sigma}.
Analogously, we find the same bound for the eigenvalue $\lambda_{\eps,h}'$ corresponding to $v_{\eps,h}$.
Thus, there are exactly $K-1$ eigenvalue contained in $[\lambda_1(\Sigma)-\eps^{k/2},\lambda_1(\Sigma)+\eps^{k/2}]$
for $\eps$ sufficiently small.

Therefore, for the third term, we find from \cref{lem_anne_quasimod} and \cref{mono_quasimode_1} that for any quasimode
$\phi_{j,\eps}$ with $j \geq 1$ there is a linear combination $w_{\eps,h,j}$ of the eigenfunctions $u_{\eps,h}^i$ 
such that
$$
\int_{\Sigma_{\eps,h,j}} |\phi_{j ,\eps} - w_{\eps,h,j} |^2 \leq C \eps^2
$$
since the quasimodes $\phi_{\eps,1},\dots,\phi_{\eps,K}$ are $L^2$-orthogonal up to an error of size $O(\eps^k)$ and the space spanned by them has dimension precisely $K-1$ it follows that also given any $u_{\eps,h}^j$, 
there is a linear combination $w_{\eps,h,j}'$of the quasimodes $\phi_{j,\eps}$ such that
$$
\int_{\Sigma_{\eps,h,j}} |u_{\eps,h}^i - w_{\eps,h,i}' |^2 \leq C \eps^2.
$$
In particular, this implies that
\begin{equation*}
\begin{split}
\int_{\Sigma_{\eps,h}} |\psi_{\eps,h} u_{\eps,h}^i| 
&\leq
\int_{\Sigma_{\eps,h}}|\psi_{\eps,h} w_{\eps,h,i}'| 
+ 
\int_{\Sigma_{\eps,h}}|\psi_{\eps,h}| |u_{\eps,h}^i - w_{\eps,h,i}' |
\\
&\leq
\left( \int_{\Sigma_{\eps,h,j}} |u_{\eps,h}^i - w_{\eps,h,i}' |^2 \right)^{1/2}
\leq
C \eps.
\end{split}
\end{equation*}
In conclusion, we find that
\begin{equation} \label{eq_comp_u_v}
|m(u_{\eps,h})n(u_{\eps,h}) + m(v_{\eps,h})n(v_{\eps,h})|
\leq 
C \eps^{1/2}.
\end{equation} 
Suppose now that $m_{\eps,h}(u_{\eps,h})/n_{\eps,h}(u_{\eps,h})$ corresponds to the negative solution of \eqref{eq_def_f}, 
 we then find that from \eqref{eq_comp_u_v} that  $m_{\eps,h}(v_{\eps,h})/n_{\eps,h}(v_{\eps,h})$ corresponds up to an error of size $O(\eps^{1/2})$ to the positive solution of \eqref{eq_def_f}\footnote{Note that if $x$ is a solution to \eqref{eq_def_f}, the second solution is given by $-1/x$.}.
 Therefore, up to reversing the roles of $u_{\eps,h}$ and $v_{\eps,h}$ we may assume that $m_{\eps,h}(u_{\eps,h})/n_{\eps,h}(u_{\eps,h})$ corresponds to the positive solution of \eqref{eq_def_f}.
 Therefore, by integrating $\phi_{0,\eps}$ against $u_{\eps,h}$ we find from \cref{mono_quasimode_1} that
 \begin{equation} \label{eq_conclusion}
 \lambda_{\eps,h} = 
 \lambda_1(\Sigma) - f_\eps(h)^{-1} \lambda_1(\Sigma) \phi_0(x_0) \eps^{1/2} + O(\eps \log(1/\eps)).
 \end{equation}
 Moreover, by the same argument, the eigenvalue $\lambda_{\eps,h}'$ corresponding to $v_{\eps,h}$ 
 satisfies $\lambda_{\eps,h}' \geq \lambda_1(\Sigma)$.
Recall that we also know that there are precisely $K-1$ eigenvalues contained in $[\lambda_1(\Sigma)-\eps^{k/4},\lambda_1(\Sigma)+\eps^{k/4}]$.
 Hence, \cref{conv} implies that
 $$
 \lambda_1(\Sigma_{\eps,h}) = \lambda_{\eps,h}
 $$
 and we can conclude thanks to \eqref{eq_conclusion}.
\end{proof}

\appendix

\section{Proof of \cref{lem_anne_quasimod}} \label{sec_anne}

\begin{proof}
We write $g=g_1+g_2$,
where
$$
g_1 = \sum_{\{l \colon \lambda_l(\Sigma_{\eps,\kappa}) < \lambda -s\}} \langle f , u_{\eps,\kappa,l} \rangle_{L^2(\Sigma_{\eps,\kappa})} u_{\eps,\kappa,l}
$$
and $g_2=g-g_1$.
Note that
$$
\int_{\Sigma_{\eps,\kappa}} \nabla g_i \cdot \nabla f = \int_{\Sigma_{\eps,\kappa}} |\nabla g_i|^2
$$
and
$$
\int_{\Sigma_{\eps,\kappa}} g_i f = \int_{\Sigma_{\eps,\kappa}} |g_i|^2.
$$
Therefore, we find from the assumption that
$$
\int_{\Sigma_{\eps,\kappa}} |\nabla g_i|^2 \leq \lambda \int_{\Sigma_{\eps,\kappa}} |g_i|^2 + \delta \|g_i\|_{W^{1,2}(\Sigma_{\eps,\kappa})},
$$
which implies that
$$
\|g_i\|_{W^{1,2}(\Sigma_{\eps,\kappa})}^2 \leq (\lambda+1) \|g_i \|_{L^2(\Sigma_{\eps,\kappa})}^2 + \delta \|g_i\|_{W^{1,2}(\Sigma_{\eps,\kappa})}.
$$
This in turn implies that
$$
\|g_i\|_{W^{1,2}(\Sigma_{\eps,\kappa})} \leq  (\lambda+1)\|g_i\|_{L^2(\Sigma_{\eps,\kappa})} + \delta.
$$
We now distinguish two cases.
Since we assume $s \leq 1$, the conclusion trivially holds if $\|g_i\|_{L^2(\Sigma_{\eps,\kappa})} \leq \delta$.
If $\|g_i\|_{L^2(\Sigma_{\eps,\kappa})} \geq \delta$, the previous computation implies that we have
$$
\|g_i\|_{W^{1,2}(\Sigma_{\eps,\kappa})} \leq  (\lambda+2)\|g_i\|_{L^2(\Sigma_{\eps,\kappa})} 
$$
Thus, testing against $g_i$, we find that
$$
s \|g_i\|_{L^2(\Sigma_{\eps,\kappa})}^2 \leq  (\lambda+2) \delta \|g_i\|_{L^2(\Sigma_{\eps,\kappa})},
$$
from which the lemma easily follows.
\end{proof}

\section{Green's functions} \label{sec_green}

\begin{lemma}
Let $(\Sigma,g)$ be a closed Riemannian surface with $\area(\Sigma,g)=1$ and $z \in \Sigma$, then there is a unique function $G(\cdot,z) \colon \Sigma \setminus \{z\} \to \IR$
such that 
\begin{itemize}
\item[(i)] $\Delta G(\cdot,z) = \delta_z - 1$ in the sense of distributions.
\item[(ii)] In conformal coordinates centered at $z$, such that $g(z) = g_{\it{eucl}}$ in these coordinates, we have that
$$
G(x,z) = \frac{1}{2\pi} \log\left(\frac{1}{|x-z|}\right) + \psi(x),
$$
where $\psi$ is a smooth function with $\psi(0)=0$.
\end{itemize}
\end{lemma}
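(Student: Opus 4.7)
The plan is to construct $G(\cdot, z)$ by the parametrix method, transplanting the Euclidean fundamental solution via the conformal chart at $z$ and then correcting the resulting smooth error by Fredholm theory, with uniqueness following from removability of the logarithmic singularity. To begin, I would fix a conformal chart $(U, x)$ centered at $z$ with $g(z) = g_{\it{eucl}}$ and choose a smooth cutoff $\chi$ compactly supported in $U$ with $\chi \equiv 1$ in a neighborhood of $z$. Define the parametrix $P(x) := \frac{\chi(x)}{2\pi} \log(1/|x|)$, extended by zero outside $U$. Because the Laplacian is conformally covariant in dimension two and $g(z) = g_{\it{eucl}}$, a short distributional computation (using that $\frac{1}{2\pi}\log(1/|\cdot|)$ is the Euclidean fundamental solution for the positive Laplacian) gives $\Delta_g\bigl(\tfrac{1}{2\pi}\log(1/|x|)\bigr) = \delta_z$ in the chart, and the product rule then yields $\Delta_g P = \delta_z + f$ with $f$ smooth on $\Sigma$ and supported in the annular region $\{\nabla \chi \neq 0\}$.

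Next I would absorb the error. Since $\int_\Sigma \Delta_g P \, dV_g = 0$ on the closed surface by Stokes, one has $\int_\Sigma f\, dV_g = -1$, so the right-hand side of the auxiliary equation $\Delta_g \psi_0 = -f - 1$ has vanishing integral and is therefore orthogonal to $\ker \Delta_g = \IR$. By standard Fredholm theory for the self-adjoint elliptic operator $\Delta_g$ on a closed surface this equation admits a smooth solution $\psi_0$, unique up to an additive constant; I fix that constant by demanding $\psi_0(z) = 0$. Setting $G(\cdot, z) := P + \psi_0$ gives $\Delta_g G(\cdot, z) = \delta_z + f - f - 1 = \delta_z - 1$ distributionally, which is (i). Since $\chi \equiv 1$ near $z$, the local representation reads $G(x, z) = \frac{1}{2\pi}\log(1/|x|) + \psi_0(x)$ with $\psi_0$ smooth and $\psi_0(0) = 0$, establishing (ii).

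For uniqueness, suppose $G_1$ and $G_2$ both satisfy (i) and (ii). Their difference is a distributional solution of $\Delta_g u = 0$, and near $z$ the logarithmic singularities cancel while both regular parts vanish at $z$, so $u$ is bounded there and therefore, by removability of point singularities for bounded harmonic distributions, extends to a smooth harmonic function on the closed surface. Such a function is constant, and the normalization forces the constant to be zero. The only mildly substantive step is the Fredholm solvability, which is entirely standard; what actually requires the most care is the bookkeeping with the conformal covariance identity in dimension two together with the positive-Laplacian sign convention adopted in the paper, so that the Euclidean logarithm is indeed an honest local fundamental solution for $\Delta_g$ at $z$.
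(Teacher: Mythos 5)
Your proposal is correct and follows essentially the same route as the paper: transplant the Euclidean logarithmic fundamental solution via the conformal chart with a cutoff, use conformal covariance of the two-dimensional Laplacian to identify the distributional error as a smooth function with integral $-1$, solve the resulting Poisson equation by Fredholm theory since the right-hand side is orthogonal to the constants, and fix the additive constant by the normalization $\psi(0)=0$. Your uniqueness argument is slightly more explicit than the paper's one-line appeal to the maximum principle (you spell out the removability step), but it is the same idea.
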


\begin{proof}
We take conformal coordinates $(U,x)$ centered as $z$ as in the assertion.
Let $\eta \colon \Sigma \to [0,1]$ be a cut-off function that is $1$ near $z$ and has $\supp \eta \subset U$.
Consider the function $f \colon \Sigma \setminus \{z\} \to \IR$ given in $U$ by 
$$
f(x) = \eta(x) \frac{1}{2\pi} \log \left(\frac{1}{|x-z|}\right),
$$
where $|x-z|$ is the Euclidean distance in the coordinates $(U,x)$.
Since these coordinates are conformal and the Laplace operator is conformally covariant in dimension two, it is easy to see that
\begin{equation} \label{eq_green_ortho}
\Delta f
=
\delta_z + h,
\end{equation}
where 
$$
h=2 \nabla \eta \cdot \nabla \frac{1}{2\pi} \log \left(\frac{1}{|x-z|}\right) + \frac{1}{2\pi} \log \left(\frac{1}{|x-z|}\right) \Delta \eta.
$$
is a smooth function defined on all of $\Sigma$.
It follows from \eqref{eq_green_ortho} that
$$
\int_\Sigma h = -1.
$$
Therefore, since $\area(\Sigma,g)=1$, the function $h+1$ is orthogonal to the constants.
Since the constant function are exactly the kernel of $\Delta$ as $\Sigma$ is closed, we can find a smooth function $r \colon \Sigma \to \IR$ which is unique up to the addition of constants with
$$
\Delta r = h+1.
$$
Thus we have
$$
\Delta (f - r) = \delta_y-1.
$$
By adding a constant to $f-r$ we can now easily arrange to have (ii).
Uniqueness follows immediately from the maximum principle.
\end{proof}


\bibliography{mybibfile}

\nocite{*}

\end{document}